\theoremstyle{plain}
\newtheorem{thm}{Theorem}
\newtheorem{cor}[thm]{Corollary}
\newtheorem*{JTP}{Jacobi's triple product identity}
\newtheorem*{QPI}{Quintuple product identity}
\newtheorem*{BT}{Bailey's transform}
\newtheorem*{BL}{Bailey's lemma}
\newtheorem{lemma}[thm]{Lemma}
\newtheorem{id}[thm]{Identity}
\theoremstyle{definition}
\newtheorem{example}[thm]{Example}
\newcommand{\half}{\frac{1}{2}}
\newcommand{\ft}{\Psi}
\newcommand{\ts}{\psi}
\newcommand{\s}{\varphi}
\newcommand{\sn}[1]{\textbf{S. #1}}
\newcommand{\ftn}[1]{\textbf{F. #1}}
\date{\dateline{April 6, 2008}{May 24, 2008}{\small May 31, 2008} \\
\small Mathematics Subject Classification: 33D15, 05A17, 05A19, 11B65, 
11P81. \\Revision of August 28, 2015
}
\title{Rogers-Ramanujan-Slater Type Identities}
\author{James Mc Laughlin \\
\small Department of Mathematics, 124 Anderson Hall,\\[-0.8ex]
\small West Chester University, West Chester, PA 19383 USA\\[-0.8ex]
\small \texttt{jmclaughlin@wcupa.edu}\\
\hfil\break\\
Andrew V. Sills \\
\small Department of Mathematical Sciences,\\[-0.8ex]
\small 203 Georgia Avenue, Georgia Southern University, Statesboro, GA 30460 USA\\[-0.8ex]
\small \texttt{asills@georgiasouthern.edu}\\
\hfil\break\\
Peter Zimmer\\
\small Department of Mathematics, 124 Anderson Hall,\\[-0.8ex]
\small West Chester University, West Chester, PA 19383 USA\\[-0.8ex]
\small \texttt{pzimmer@wcupa.edu}}
\numberwithin{equation}{subsection}
\begin{document}
\maketitle

\begin{abstract}
In this survey article, we present an expanded version of Lucy Slater's famous list of
identities of the Rogers-Ramanujan type, including identities of similar type, which were
discovered after the publication of Slater's papers, and older identities (such as those in Ramanujan's lost notebook) which were not included in Slater's papers.  We attempt to supply the
earliest known reference for each identity.  Also included are identities of false theta functions, along with their relationship to Rogers-Ramanujan type identities.
We also describe several ways in which pairs/larger sets of
identities may be related, as well as dependence relationships between identities.
\end{abstract}

\section{Introduction}
\subsection{Theta Functions}
\emph{Ramanujan's theta function}~\cite[p. 11, Eq. (1.1.5)]{AB05} is defined as
  \begin{equation}
     f(a,b):= \sum_{n=-\infty}^\infty a^{n(n+1)/2}b^{n(n-1)/2},
  \end{equation}
for $|ab|<1$. It is called a theta function, despite the lack of a
theta in the notation, because it is equivalent, via change of
variable, to the theta function of Jacobi~\cite[p. 463]{WW27}:
\begin{equation}
  \vartheta(z,w) := \sum_{n=-\infty}^\infty (-1)^n w^{n^2} e^{2niz},
\end{equation}
where $|w|<1$.

  The following special cases of
$f(a,b)$ arise so often that they were given their own special notation
by Ramanujan~\cite[p. 11]{AB05}:
\begin{align}
    \s(q) &:= f(q,q)\\
    \ts(q) &:= f(q,q^3)\\
    f(-q) &:= f(-q,-q^2).
\end{align}
Ramanujan further defines
\begin{equation}
    \chi(q) := \frac{f(-q^2,-q^2)}{ f(-q)}.
\end{equation}

 One of the most important results in the theory of theta functions is that they can be expressed as infinite products:
\begin{JTP}
For $|ab|<1$,
\begin{equation}\label{jtpeq}
   f(a,b) = (-a, -b, ab ; ab)_\infty,
 \end{equation}
where
 \[ (A;w)_\infty := \prod_{n=0}^\infty (1-Aw^n), \]
and
 \[ (A_1, A_2, \dots, A_r; w)_\infty := (A_1;w)_\infty (A_2;w)_\infty \cdots (A_r;w)_\infty. \]
\end{JTP}

  An immediate corollary of ~\eqref{jtpeq} is thus
\begin{cor}
\begin{align}
 f(-q) &= (q;q)_\infty  &\mbox{ \hskip 1cm (Euler~\cite{E1748}))}\\
 \s(-q) &= \frac{(q;q)_\infty}{(-q;q)_\infty} &\mbox{ \hskip 1cm (Gau\ss~\cite{G66})}\\
 \ts(-q) &= \frac{(q^2;q^2)_\infty}{(-q;q^2)_\infty }&\mbox{ \hskip 1cm (Gau\ss~\cite{G66})}\\
 \chi(-q) &= (q;q^2)_\infty
\end{align}
\end{cor}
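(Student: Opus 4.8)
The plan is to obtain each of the four identities by substituting suitable values of $a$ and $b$ into Jacobi's triple product identity~\eqref{jtpeq} and then rewriting the resulting infinite product. For the first identity the notation $f(-q)$ stands for $f(-q,-q^2)$, so I would take $a=-q$, $b=-q^2$ (whence $ab=q^3$), giving $f(-q)=(q,q^2,q^3;q^3)_\infty$; collecting the three subproducts over the residue classes mod $3$ recovers every factor $1-q^m$ with $m\ge 1$, that is $(q;q)_\infty$. For $\s(-q)=f(-q,-q)$ one sets $a=b=-q$, $ab=q^2$, obtaining $(q,q,q^2;q^2)_\infty=(q;q^2)_\infty^2(q^2;q^2)_\infty$. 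For $\ts(-q)=f(-q,-q^3)$ one sets $a=-q$, $b=-q^3$, $ab=q^4$, obtaining $(q,q^3,q^4;q^4)_\infty$.

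The remaining work is purely a matter of standard rearrangements of $q$-products, which I would carry out using the two elementary factorizations
\[
(q;q)_\infty=(q;q^2)_\infty(q^2;q^2)_\infty, \qquad (q;q)_\infty(-q;q)_\infty=(q^2;q^2)_\infty,
\]
together with their dilations $q\mapsto q^2$ and the analogue $(q;q^2)_\infty(-q;q^2)_\infty=(q^2;q^4)_\infty$. For the Gau\ss{} identity for $\s$, rewriting the target as $(q;q)_\infty/(-q;q)_\infty=(q;q)_\infty^2/(q^2;q^2)_\infty$ and splitting each $(q;q)_\infty$ into its odd and even parts matches it against $(q;q^2)_\infty^2(q^2;q^2)_\infty$. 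For the Gau\ss{} identity for $\ts$, I would first merge $(q;q^4)_\infty(q^3;q^4)_\infty=(q;q^2)_\infty$ to reduce the product to $(q;q^2)_\infty(q^4;q^4)_\infty$, then expand the target $(q^2;q^2)_\infty/(-q;q^2)_\infty$ via $(-q;q^2)_\infty=(q^2;q^4)_\infty/(q;q^2)_\infty$ and $(q^2;q^2)_\infty=(q^2;q^4)_\infty(q^4;q^4)_\infty$ to reach the same form.

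The fourth identity is most naturally deduced from the others rather than from a fresh application of~\eqref{jtpeq}. Since $f(-q^2,-q^2)$ is the $a=b=-q^2$ case, yielding $(q^2;q^4)_\infty^2(q^4;q^4)_\infty$, and $f(-q)=(q;q)_\infty$ by the first identity, I would divide and cancel, using $(q;q)_\infty=(q;q^2)_\infty(q^2;q^4)_\infty(q^4;q^4)_\infty$, to get $\chi(q)=(q^2;q^4)_\infty/(q;q^2)_\infty=(-q;q^2)_\infty$, and then replace $q$ by $-q$ to conclude $\chi(-q)=(q;q^2)_\infty$.

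I do not expect a genuine obstacle: once the correct substitutions are made, every step is a bookkeeping exercise in partitioning a product $\prod(1-q^{an+b})$ according to residues, and convergence is automatic since $|ab|<1$ for $|q|<1$ at each application of~\eqref{jtpeq}. The only point that repays a little care is tracking the sign substitution $q\mapsto -q$ in the $\chi$ computation, since $\chi$ is defined through $f(-q)$ and a careless substitution can interchange $(q;q^2)_\infty$ and $(-q;q^2)_\infty$.
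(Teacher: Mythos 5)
Your proposal is correct and is exactly the route the paper intends: the paper labels these four identities an ``immediate corollary'' of the triple product identity~\eqref{jtpeq} without writing out details, and your substitutions $f(-q,-q^2)$, $f(-q,-q)$, $f(-q,-q^3)$, $f(-q^2,-q^2)$ together with the standard product rearrangements (including the Euler factorizations and the final $q\mapsto -q$ step for $\chi$) fill in precisely those details correctly.
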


  Sometimes a linear combination of two theta series can be expressed as a
single infinite product.  The quintuple product identity has
probably been (re)discovered more than any other identity in the
theory of $q$-series.  The earliest published occurrence appears to
be due to Fricke~\cite[p. 207, Eq. (6)]{F16}. See Cooper's survey
paper for a history and many proofs of the quintuple product
identity~\cite{C06}.
  \begin{QPI}
  \begin{align}
Q(w,x) &:=
  f(-wx^3, -w^2 x^{-3}) + x f(-w x^{-3}, -w^2 x^3) \notag \\
  &= \frac{ f(wx^{-1},x) f(-wx^{-2}, -wx^2)}{ f(-w^2,-w^4)} \notag\\
  &= (-w x^{-1}, -x, w; w)_\infty (w x^{-2}, w x^2; w^2)_\infty.
   \end{align}
  \end{QPI}
Bailey~\cite[p. 220, Eq. (4.1)]{B51} showed how certain linear combinations of theta series
can be simplified to a single theta series:
   \begin{equation}
   f( qz^2, q^3 z^{-2}) + z f(q^3 z^2, q z^{-2}) = f(z, q z^{-1} ).
   \end{equation}

\subsection{Bailey Pairs and Bailey's lemma}
Of central importance are the two Rogers-Ramanujan identities, which
each assert the equality of  a certain $q$-series with
a ratio of theta functions:
  \begin{align}
 G(q):=  \sum_{n=0}^\infty \frac{ q^{n^2} }{ (q;q)_n } &= \frac{f(-q^2,-q^3) }{f(-q)} \label{RR1}\\
 H(q):=  \sum_{n=0}^\infty \frac{ q^{n(n+1)}}{(q;q)_n} & = \frac{f(-q,-q^4) }{f(-q)} \label{RR2},
   \end{align}
 where
 \[ (a;q)_n := \left\{ \begin{array}{ll}
     (1-a)(1-aq)\cdots(1-aq^{n-1}) &\mbox{if $n\in \mathbb Z_+$}\\
     1 &\mbox{if $n=0$}\\
     \left[ (1-aq^{-1})(1-aq^{-2})\cdots (1-aq^{n})\right]^{-1} &\mbox{if $n\in \mathbb Z_-$.}
     \end{array} \right.
  \]
Note that it is customary to write $(A)_n$ for $(A;q)_n$ and
$(A)_\infty$ for $(A;q)_\infty$.

Using~\eqref{jtpeq}, and after some simplification, the right hand sides of~\eqref{RR1} and~\eqref{RR2} can
be expressed as, respectively, the infinite products
\[ \prod_{n=1}^\infty \frac {1}{(1-q^{5n-4})(1-q^{5n-1})}  \] and
\[ \prod_{n=1}^\infty \frac {1}{(1-q^{5n-3})(1-q^{5n-2}) }. \]

  Normally, an ``identity of Rogers-Ramanujan type" asserts the equality of a certain
$q$-series with a ratio of a theta series (or perhaps a linear combination of several
theta series) to one of $f(-q^m)$, $\s(-q^m)$, or $\ts(\pm q^m)$ for some positive integer $m$.
Sometimes, however, the term is applied to other types of $q$-series---product identities,
or polynomial generalizations thereof.
In particular, we limit ourselves to considering
$q$-series in which the power of $q$ is quadratic in the summation variable.  L. J. Rogers~\cite{R94, R17}
and S.  Ramanujan~\cite{AB05, AB07} discovered a number of identities of this type.
W. N. Bailey studied Rogers's work and as a result was able to
simplify and extend it in~\cite{B47, B49}.  L. J. Slater used and extended
Bailey's methods to obtain a large collection of identities of Rogers-Ramanujan type
in~\cite{S51, S52}.

\begin{BT} If
\begin{equation}\beta_n=\sum_{r=0}^n \alpha_r u_{n-r} v_{n+r} \label{BPdef}\end{equation} and
\begin{equation}\gamma_n = \sum_{r=n}^\infty \delta_r u_{r-n} v_{r+n}, \end{equation} then
\[ \sum_{n=0}^\infty \alpha_n \gamma_n = \sum_{n=0}^\infty  \beta_n \delta_n. \]
\end{BT}
\begin{proof}
\begin{equation*}
  \sum_{n=0}^\infty \alpha_n \gamma_n = \sum_{n=0}^\infty \sum_{r=n}^\infty
     \alpha_n \delta_r u_{r-n} v_{r+n}
     = \sum_{r=0}^\infty \sum_{n=0}^r \delta_r \alpha_n u_{r-n} v_{r+n}
    = \sum_{r=0}^\infty \delta_r \beta_r
\end{equation*}
\end{proof}
Bailey does not use his transform in the most general setting, but rather
specializes $u_n = 1/(q)_n$ and $v_n= 1/(xq)_n$.
Such a pair $(\alpha_n, \beta_n)$ which
satisfies~\eqref{BPdef} is called a \emph{Bailey pair} in the subsequent literature.

\begin{BL} If $\left(  \alpha_n (x,q) , \beta_n (x,q) \right)$ satisfies~\eqref{BPdef},
again with $u_n=1/(q)_n$ and $v_n=1/(xq)_n$, then so does
$\left( \alpha'_n (x,q),  \beta'_n (x,q) \right)$, where
\begin{equation} \alpha'_r(x,q) = \frac{ (\rho_1)_r (\rho_2)_r}{(xq/\rho_1)_r (xq/\rho_2)_r}
\left( \frac{xq}{\rho_1 \rho_2} \right)^r \alpha_r
\end{equation} and
\begin{equation}
  \beta'_n (x,q) = \frac{1}{(xq/\rho_1)_n (xq/\rho_2)_n }
      \sum_{j=0}^n \frac{(\rho_1)_j (\rho_2)_j (xq/\rho_1\rho_2)_{n-j} }{ (q;q)_{n-j} }
      \left( \frac{xq}{\rho_1 \rho_2} \right)^j \beta_j(x,q).
  \end{equation}
Equivalently, if
$\left(  \alpha_n (x,q) , \beta_n (x,q) \right)$ is a Bailey pair, then
\begin{multline}
\frac{1}{(xq/\rho_1)_n (xq/\rho_2)_n }
      \sum_{j= 0}^n \frac{(\rho_1)_j (\rho_2)_j (xq/\rho_1\rho_2)_{n-j} }{ (q)_{n-j} }
      \left( \frac{xq}{\rho_1 \rho_2} \right)^j \beta_j(x,q)
    \\= \sum_{r=0}^n \frac{ (\rho_1)_r (\rho_2)_r}{(q)_{n-r} (xq)_{n+r}
    (xq/\rho_1)_r (xq/\rho_2)_r }
\left( \frac{xq}{\rho_1 \rho_2} \right)^r \alpha_r (x,q). \label{SBL}
\end{multline}
\end{BL}
\begin{proof} See Andrews~\cite[pp. 25--27, Theorem 3.3]{A86}. \end{proof}

\begin{cor}
\begin{align}
\sum_{n=0}^\infty x^n q^{n^2} \beta_n(x,q) &= \frac{1}{(xq)_\infty}
  \sum_{r=0}^\infty x^r q^{r^2} \alpha_r (x,q) \label{aPBL} \\
\sum_{n=0}^\infty x^n q^{n^2} (-q;q^2)_n \beta_n(x,q^2) &= \frac{(-xq;q^2)_\infty}{(xq^2;q^2)_\infty}
  \sum_{r=0}^\infty \frac{x^r q^{r^2} (-q;q^2)_r }{(-xq;q^2)_r} \alpha_r (x,q^2) \label{aTBL} \\
\sum_{n=0}^\infty x^n q^{n(n+1)/2} (-1)_n \beta_n(x,q)
    &= \frac{(-xq)_\infty}{(xq)_\infty}
  \sum_{r=0}^\infty \frac{x^r q^{r(r+1)/2} (-1)_r }{(-xq)_r} \alpha_r (x,q) \label{aSBL}
 \end{align}
\end{cor}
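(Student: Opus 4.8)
The plan is to obtain all three identities as $n\to\infty$ limits of Bailey's lemma \eqref{SBL}, followed by suitable specializations of the free parameters $\rho_1,\rho_2$. First I would let $n\to\infty$ in \eqref{SBL}. Since for fixed $r$ and $j$ we have $(a;q)_{n-r}\to(a;q)_\infty$ and $(a;q)_{n+r}\to(a;q)_\infty$, the finite Pochhammer symbols on both sides converge to their infinite counterparts, and after cancelling a common factor of $(q)_\infty$ the two sides collapse to the free-parameter limiting form
\begin{multline*}
\frac{(xq/\rho_1\rho_2)_\infty}{(xq/\rho_1)_\infty (xq/\rho_2)_\infty}
\sum_{j=0}^\infty (\rho_1)_j (\rho_2)_j \left(\frac{xq}{\rho_1\rho_2}\right)^j \beta_j(x,q)\\
= \frac{1}{(xq)_\infty}\sum_{r=0}^\infty \frac{(\rho_1)_r(\rho_2)_r}{(xq/\rho_1)_r (xq/\rho_2)_r}\left(\frac{xq}{\rho_1\rho_2}\right)^r \alpha_r(x,q),
\end{multline*}
from which the corollary will follow by specialization.

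The second step is to feed in the asymptotic
\[
\lim_{\rho\to\infty}\rho^{-j}(\rho;q)_j = (-1)^j q^{\binom{j}{2}},
\]
read off directly from $(\rho;q)_j=\prod_{i=0}^{j-1}(1-\rho q^i)$, together with $(xq/\rho)_j\to 1$ and $(xq/\rho)_\infty\to 1$. For \eqref{aPBL} I would send both $\rho_1,\rho_2\to\infty$: the combination $(\rho_1)_j(\rho_2)_j (xq/\rho_1\rho_2)^j$ then tends to $q^{j(j-1)}(xq)^j=x^jq^{j^2}$, the left-hand prefactor tends to $1$, and the display reduces at once to \eqref{aPBL}. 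For \eqref{aSBL} I would keep $\rho_2=-1$ fixed and send $\rho_1\to\infty$: here $(\rho_2)_j=(-1)_j$ and $(xq/\rho_2)_r=(-xq)_r$, the left prefactor becomes $1/(-xq)_\infty$, and $(\rho_1)_j(xq/\rho_1\rho_2)^j\to x^j q^{j(j+1)/2}$, so clearing $(-xq)_\infty$ yields \eqref{aSBL}. For \eqref{aTBL} I would first replace $q$ by $q^2$ throughout the limiting form, then set $\rho_2=-q$ and send $\rho_1\to\infty$, using $(\rho;q^2)_j\sim(-\rho)^j q^{j(j-1)}$; this installs the factors $(-q;q^2)_j$, $(-xq;q^2)_r$, and $(-xq;q^2)_\infty$ in the correct places, and clearing $(-xq;q^2)_\infty$ gives \eqref{aTBL}.

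The only genuinely delicate point is justifying the interchange of the limits ($n\to\infty$ and $\rho_i\to\infty$) with the infinite summations, and I expect this to be the main obstacle to a fully rigorous argument. One must either work in the ring of formal power series in $q$, where each coefficient stabilizes after finitely many terms and the interchanges are automatic, or else restrict to $|q|<1$ with $|x|$ small and supply a dominated- or uniform-convergence estimate that bounds the tails uniformly in the parameters tending to infinity. Once that bookkeeping is settled, every remaining step is just the Pochhammer asymptotic and the algebraic simplifications sketched above.
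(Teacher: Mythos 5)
Your proof is correct and is essentially the paper's own argument: the paper likewise takes the $n\to\infty$ limit of \eqref{SBL} (merely folding $\rho_2\to\infty$ into that same limit to get the one-parameter form \eqref{SWBL} before specializing), and then makes exactly your specializations --- $\rho\to\infty$ for \eqref{aPBL}, $\rho=-q$ in base $q^2$ for \eqref{aTBL}, and $\rho=-1$ for \eqref{aSBL}. Incidentally, your choice $\rho_2=-1$ for \eqref{aSBL} is the correct one; the paper's instruction to set $\rho_1=-q$ in \eqref{SWBL} is a typo for $\rho_1=-1$, since $\rho_1=-q$ would yield $\sum_{j\ge 0} x^j q^{j(j-1)/2}(-q)_j\,\beta_j(x,q)$ rather than the stated left-hand side of \eqref{aSBL}.
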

\begin{proof}
Let $n,\rho_2\to\infty$ in~\eqref{SBL} to obtain
\begin{multline} \label{SWBL}
 \sum_{j= 0}^\infty \frac{ (-1)^j  x^j q^{j(j+1)/2} (\rho_1)_j}{\rho_1^j} \beta_j (x,q)\\
= \frac{(xq/\rho_1)_\infty}{(xq)_\infty}
\sum_{r=0}^\infty \frac{ (-1)^r x^r q^{r(r+1)/2} (\rho_1)_r }{\rho_1^r(xq/\rho_1)_r  } \alpha_r(x,q).
\end{multline}
To obtain~\eqref{aPBL}, let $ \rho_1\to\infty$ in~\eqref{SWBL}.
To obtain~\eqref{aTBL}, in~\eqref{SWBL}, set $\rho_1=-\sqrt{q}$ and
replace $q$ by $q^2$ throughout.
To obtain~\eqref{aSBL}, in~\eqref{SWBL} set $\rho_1=-q$.\end{proof}

\begin{cor}
\begin{align}
\sum_{n=0}^\infty q^{n^2} \beta_n(1,q) &= \frac{1}{f(-q)}
  \sum_{r=0}^\infty  q^{r^2} \alpha_r (1,q) \label{PBL} \tag{PBL} \\
\sum_{n=0}^\infty q^{n^2} (-q;q^2)_n \beta_n(1,q^2) &= \frac{1}{\ts(-q)}
  \sum_{r=0}^\infty q^{r^2}  \alpha_r (1,q^2) \label{TBL}\tag{TBL}\\
\sum_{n=0}^\infty  q^{n(n+1)/2} (-1)_n \beta_n(1,q)
    &= \frac{2}{\s(-q)}\sum_{r=0}^\infty  \frac{q^{r(r+1)/2}}{1+q^r}  \alpha_r (1,q) \label{S1BL}
    \tag{S1BL}
\end{align}
\end{cor}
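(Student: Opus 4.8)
The plan is to obtain each of~\eqref{PBL}, \eqref{TBL}, and~\eqref{S1BL} by setting $x=1$ in~\eqref{aPBL}, \eqref{aTBL}, and~\eqref{aSBL}, respectively, and then rewriting the resulting infinite products in terms of the theta functions using the evaluations $f(-q)=(q)_\infty$, $\ts(-q)=(q^2;q^2)_\infty/(-q;q^2)_\infty$, and $\s(-q)=(q)_\infty/(-q)_\infty$ supplied by the corollary to Jacobi's triple product identity.

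For~\eqref{PBL}, putting $x=1$ in~\eqref{aPBL} leaves the left side unchanged and turns the prefactor $1/(xq)_\infty$ into $1/(q)_\infty$; since $f(-q)=(q)_\infty$, this is exactly~\eqref{PBL}. For~\eqref{TBL}, I would set $x=1$ in~\eqref{aTBL}. Then the summand factor $(-q;q^2)_r/(-xq;q^2)_r$ collapses to $1$, while the prefactor becomes $(-q;q^2)_\infty/(q^2;q^2)_\infty$, which is precisely $1/\ts(-q)$ by the product formula for $\ts(-q)$; this yields~\eqref{TBL}.

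The only identity requiring genuine manipulation is~\eqref{S1BL}. Setting $x=1$ in~\eqref{aSBL} makes the prefactor $(-q)_\infty/(q)_\infty=1/\s(-q)$, so all the work lies in the summand, specifically in the ratio $(-1)_r/(-q)_r$. Writing $(-1;q)_r=\prod_{k=0}^{r-1}(1+q^k)$ and $(-q;q)_r=\prod_{k=1}^{r}(1+q^k)$, almost all factors telescope, and I expect $(-1)_r/(-q)_r=2/(1+q^r)$, valid even at $r=0$ since both sides equal $1$ there. Substituting this produces the factor $2/(1+q^r)$ inside the sum and the global constant $2$ in front, giving~\eqref{S1BL}.

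I expect this last ratio to be the main (though still elementary) obstacle, precisely because of the anomalous $k=0$ factor of $1+q^0=2$ in $(-1;q)_r$, which is responsible both for the $1+q^r$ in the denominator and for the leading $2$; the other two identities are direct substitutions followed by a single product evaluation.
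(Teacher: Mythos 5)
Your proposal is correct and is precisely the paper's proof, which simply says to set $x=1$ in~\eqref{aPBL}, \eqref{aTBL}, and~\eqref{aSBL}; you have merely filled in the routine details (the product evaluations for $f(-q)$, $\ts(-q)$, $\s(-q)$ and the telescoping $(-1)_r/(-q)_r = 2/(1+q^r)$), all of which check out, including the $r=0$ case.
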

\begin{proof}
  To obtain~\eqref{PBL},~\eqref{TBL}, and~\eqref{S1BL}, set $x=1$
in~\eqref{aPBL},
  ~\eqref{aTBL}, and~\eqref{aSBL} respectively.
\end{proof}

\begin{cor}
\begin{align}
\frac{1}{1-q}\sum_{n=0}^\infty  q^{n(n+1)/2} (-q)_n \beta_n(q,q)
    &= \frac{1}{\s(-q)}
  \sum_{r=0}^\infty  q^{r(r+1)/2}  \alpha_r (q,q) \label{S2BL} \tag{S2BL}  \\
\frac{1}{1-q}\sum_{n=0}^\infty  (-1)^n q^{n(n+1)/2} (q)_n \beta_n(q,q)
    &=
  \sum_{r=0}^\infty  (-1)^r q^{r(r+1)/2}  \alpha_r (q,q). \label{FBL} \tag{FBL}
 \end{align}
\end{cor}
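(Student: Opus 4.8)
The plan is to obtain both identities as specializations of the limiting form \eqref{SWBL} of Bailey's lemma, now taking $x=q$ (rather than $x=1$ as in the preceding corollaries) and choosing $\rho_1$ so that the weight multiplying $\beta_n$ collapses to the desired shape. Thus, whereas the previous corollary fixed $\rho_1$ and set $x=1$, here I keep $x=q$ and vary $\rho_1$.

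For \eqref{S2BL} I would set $x=q$ and $\rho_1=-q$ in \eqref{SWBL}. On the left the factor $(-1)^n x^n q^{n(n+1)/2}(\rho_1)_n/\rho_1^n$ becomes $q^{n(n+1)/2}(-q)_n$, since $\rho_1^n=(-q)^n=(-1)^n q^n$ cancels the $(-1)^n x^n=(-1)^n q^n$; likewise the summand weight on the right collapses to $q^{r(r+1)/2}$, because there $(\rho_1)_r=(-q)_r$ cancels the denominator factor $(xq/\rho_1)_r=(-q)_r$. It then remains to evaluate the prefactor $\frac{(xq/\rho_1)_\infty}{(xq)_\infty}=\frac{(-q)_\infty}{(q^2;q)_\infty}$. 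Here I would use $(q^2;q)_\infty=(q;q)_\infty/(1-q)$ together with Gauss's evaluation $\s(-q)=(q;q)_\infty/(-q;q)_\infty$ to rewrite this prefactor as $(1-q)/\s(-q)$. Dividing the resulting identity through by $1-q$ yields \eqref{S2BL}.

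For \eqref{FBL} the same procedure applies with $x=q$ and $\rho_1=q$. Now $(\rho_1)_n=(q)_n$ and $\rho_1^n=q^n$, so the left-hand weight becomes $(-1)^n q^{n(n+1)/2}(q)_n$, while on the right $(\rho_1)_r=(q)_r$ again cancels $(xq/\rho_1)_r=(q)_r$ and $q^r$ cancels $x^r=q^r$, leaving $(-1)^r q^{r(r+1)/2}$. The prefactor is now $\frac{(q;q)_\infty}{(q^2;q)_\infty}=1-q$, again by $(q^2;q)_\infty=(q;q)_\infty/(1-q)$. Dividing by $1-q$ gives \eqref{FBL}.

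The computation is entirely routine; the one point demanding care is the appearance of the factor $1-q$. It arises because setting $x=q$ makes the denominator product $(xq)_\infty=(q^2;q)_\infty$ differ from $(q;q)_\infty$ exactly by the missing factor $1-q$, and this is precisely the source of the $\frac{1}{1-q}$ on the left of \eqref{S2BL} and \eqref{FBL}. I would also verify that the chosen values $\rho_1=-q$ and $\rho_1=q$ avoid the singular case $\rho_1=xq=q^2$, where the denominator factor $(xq/\rho_1)_r$ would become $(1)_r$ and vanish for $r\geq 1$; since $q\neq q^2$ for $|q|<1$, both specializations are legitimate.
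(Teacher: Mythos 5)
Your proof is correct and follows essentially the same route as the paper, which obtains \eqref{S2BL} and \eqref{FBL} by the same specializations of \eqref{SWBL} (the paper writes ``$\rho_2=-q$'' and ``$\rho_2=q$,'' but since $\rho_2$ was already sent to infinity in deriving \eqref{SWBL} this is a slip for $\rho_1$, exactly the parameter you set). Your explicit verification of the cancellations, the prefactor computation $(q^2;q)_\infty=(q;q)_\infty/(1-q)$ producing the $\frac{1}{1-q}$, and the check that $\rho_1\neq xq$ simply fill in details the paper leaves to the reader.
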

\begin{proof}
To obtain~\eqref{S2BL}, set $x=q$ and $\rho_2=-q$ in~\eqref{SWBL}.
To obtain~\eqref{FBL}, set $x=\rho_2 = q$ in~\eqref{SWBL}.
\end{proof}

\begin{example}
The pair $(\alpha_n(x,q),\beta_n(x,q))$, where
  \[ \alpha_n(x,q) = \frac{ (-1)^n x^n q^{n(3n-1)/2} (1-xq^{2n}) (x)_n}
{ (1-x)(q)_n} \] and
  \[ \beta_n(x,q)= \frac{1}{(q)_n}\] form a Bailey pair.
\end{example}
\begin{proof}
\begin{align*}
  \beta_n (x,q) &= \sum_{r=0}^n \frac{ \alpha_r (x,q) }{(q)_{n-r} (xq)_{n+r}}\\
   &= \frac{1}{(q)_n (xq)_n} \sum_{r=0}^n
   \frac{ (q^{-n})_r}{(xq^{n+1})_r } (-1)^r q^{nr- r(r-1)/2} \alpha_r (x,q)\\
  &= \frac{1}{(q)_n (xq)_n} \sum_{r=0}^n
   \frac{ (x)_r (1-xq^{2r}) (q^{-n})_r  }{ (q)_r (1-x) (xq^{n+1})_r } x^r
   q^{2r^2} \\
  &= \frac{1}{(q)_n (xq)_n} \lim_{b\to\infty} \sum_{r=0}^n
    \frac{ (x)_r (q\sqrt{x})_r (-q\sqrt{x})_r (q^{-n})_r
    (b)_r^2 }{ (\sqrt{x})_r (-\sqrt{x})_r (xq^{n+1})_r (xq/b)_r^2 }
    \left( \frac{xq^{n+1}}{b^2} \right)^r \\
   &= \frac{1}{(q)_n (xq)_n} \lim_{b\to\infty} \frac{(xq)_n (xq/b^2)_n }
   { (xq/b)_n ^2 } \mbox{  (by Jackson's ${}_6\phi_5$ sum;
see~\cite[Eq. (II.21)]{GR04})}\\
   &= \frac{1}{(q)_n}
\end{align*}
\end{proof}
 Now that we have a Bailey pair in hand, it can be inserted into the
various limiting cases of Bailey's lemma to yield series-product identities.
Notice that
\begin{equation} \label{RRbeta}
\beta_n (1,q) = 1/(q)_n
\end{equation}
and
\begin{equation} \label{RRalpha}
\alpha_n (1,q) =
  \left\{ \begin{array}{ll}
      (-1)^n q^{n(3n-1)/2} (1+q^{n}) &\mbox{ when $n>0$}\\
      1 &\mbox{ when $n=0$.}
        \end{array} \right.
\end{equation}

 Upon inserting~\eqref{RRbeta} and~\eqref{RRalpha} into~\eqref{PBL},
we obtain
\begin{align*}
\sum_{n=0}^\infty \frac{q^{n^2}}{(q)_n}
 &=  \frac{1}{f(-q)} \left(
    1+\sum_{n=1}^\infty (-1)^n q^{n(5n-1)/2} (1+q^{n}) \right) \\
 &=  \frac{1}{f(-q)}\sum_{n=-\infty}^\infty (-1)^n q^{n(5n-1)/2} \\
 &=  \frac{ f(-q^2,-q^3)}{f(-q)} \qquad\mbox{ (by~\eqref{jtpeq})}
\end{align*}
the extremes of which are the first Rogers-Ramanujan identity.

 Upon inserting~\eqref{RRbeta} and~\eqref{RRalpha} into~\eqref{TBL},
we obtain
\begin{align*}
\sum_{n=0}^\infty \frac{q^{n^2} (-q;q^2)_n}{(q^2;q^2)_n}
 &=  \frac{1}{\ts(-q)} \left(
    1+\sum_{n=1}^\infty (-1)^n q^{n(4n-1)} (1+q^{2n}) \right) \\
 &=  \frac{1}{\ts(-q)}\sum_{n=-\infty}^\infty (-1)^n q^{n(4n-1)} \\
 &=  \frac{ f(-q^3,-q^5)}{\ts(-q)} \qquad\mbox{ (by~\eqref{jtpeq})}
\end{align*}
the extremes of which are the first Ramanujan-Slater/G\"ollnitz-Gordon identity.

Finally, inserting~\eqref{RRbeta} and~\eqref{RRalpha} into~\eqref{S1BL},
we obtain
\begin{align*}
\sum_{n=0}^\infty \frac{q^{n(n+1)/2} (-1)_n}{(q)_n}
 &=  \frac{2}{\s(-q)} \left(
    1+\sum_{n=1}^\infty (-1)^n q^{2r^2} (1+q^{2n}) \right) \\
 &=  \frac{1}{\s(-q)}\sum_{n=-\infty}^\infty (-1)^n q^{2r^2} \\
 &=  \frac{ f(-q^2, -q^2)}{\s(-q)} \qquad\mbox{ (by~\eqref{jtpeq})}
\end{align*}
the extremes of which are a special case of Lebesgue's identity~\eqref{leb},
which
appears in Slater's list~\cite{S52} as Eq. (12).

\subsection{Some More General $q$-Series Identities}
A number of Rogers-Ramanujan type identities are special cases of the identities in
this subsection.

\begin{id}[Euler]
\begin{gather} \label{eul}
 (-z)_\infty =\sum_{n=0}^\infty \frac{z^n q^{n(n-1)/2} }{(q)_n} \tag{E}
\end{gather} {(\cite{E1748}; cf.
 Andrews~\cite[p. 19, Eq. (2.2.6)]{A76}) }
\end{id}
\begin{id}[Cauchy]
\begin{gather}
 \frac{1}{(z)_\infty}  = \sum_{n=0}^\infty \frac{ z^n q^{n(n-1)}}{(q)_n (z)_n}
     \label{cauchy} \tag{C}
\end{gather}
(\cite[p. 20, Eq. (2.2.8)]{A76} )
\end{id}
\begin{id}[Lebesgue]
\begin{gather}
 (aq;q^2)_\infty (-q)_\infty  =\sum_{n=0}^\infty \frac{ q^{n(n+1)/2} (a)_n}{(q)_n}
 \label{leb} \tag{L}
\end{gather}
\mbox{(Lebesgue~\cite{L40}; cf. Andrews~\cite[p. 21, Cor. 2.7]{A76})}
\end{id}
\begin{id}[Heine's $q$-analog of Gau\ss's sum]
\begin{gather}
\frac{(c/a)_\infty }{(c)_\infty }
= \sum_{n=0}^\infty \frac{ (-1)^n c^n q^{n(n-1)/2} (a)_n }{a^n (q)_n (c)_n }
\label{qGauss} \tag{H}
\end{gather}
(\cite{H47}; cf. Gasper and Rahman~\cite[Eq. (II.8)]{GR04})
\end{id}
\begin{id}[Andrews' $q$-analog of Gau\ss's ${}_2 F_1(\frac 12)$ sum]
\begin{gather}
\frac{(aq;q^2)_\infty (bq;q^2)_\infty}{(q;q^2)_\infty (abq;q^2)_\infty}
= \sum_{n=0}^\infty \frac{ (a)_n (b)_n q^{n(n+1)/2} }{(q)_n (abq;q^2)_n}
\label{q2ndGauss} \tag{AG}
\end{gather}
(Andrews~\cite[p. 526, Eq. (1.8)]{A73}; cf. Gasper and Rahman~\cite[Eq. (II.11)]{GR04})
\end{id}
\begin{id}[Andrews' $q$-analog of Bailey's ${}_2 F_1(\frac 12)$ sum]
\begin{gather}
\frac{(cq/b;q^2)_\infty (bc;q^2)_\infty}{(c)_\infty}
= \sum_{n=0}^\infty \frac{ (b)_n (q/b)_n c^n q^{\binom{n}{2}} }
{(c)_n (q^2;q^2)_n} \label{qBailey} \tag{AB}
\end{gather}
(Andrews~\cite[p. 526, Eq. (1.9)]{A73}; cf. Gasper and Rahman~\cite[Eq. (II.10)]{GR04})
\end{id}

\begin{id}[Ramanujan]
\begin{align}
\frac{f(aq^3, a^{-1}q^3)}{f(-q^2)}
&=
\sum_{n=0}^\infty \frac{ q^{2n^2} (-a^{-1}q; q^2)_n (-aq; q^2)_n}{(q^2;q^2)_{2n}}
 \label{ram1}  \tag{R1}\\
\frac{f(aq^2, a^{-1}q^2)}{\ts(-q)}
&=\sum_{n=0}^\infty \frac{ q^{n^2} (-a^{-1}q ; q^2)_n (-aq;q^2)_n}{(q;q^2)_n (q^4;q^4)_{n}}
  \label{ram2} \tag{R2}
\end{align}
\end{id}
The preceding result of Ramanujan~\cite[p. 33]{R88} yields
infinitely many identities of Rogers-Ramanujan-Slater type  when $a$
is set to $\pm q^r$ for $r\in\mathbb Q$.

\section{Rogers-Ramanujan Type Identities}
The subsections are numbered to correspond to the modulus associated with
the product side of the identities, and identities are numbered sequentially
within each subsection.  Just as Mozart's compositions are identified according
to their listing in the K\"ochel catalog~\cite{K62} with a designation of the form ``K.$n$,"
Rogers-Ramanujan type identities will likely always be associated with their
appearance in Slater's list~\cite{S52}.  Accordingly, each identity below that
appears in~\cite{S52} is designated with a  ``Slater number" \textbf{S.$n$}.
The designation ``S.$n-$" means that $q$ has been replaced by $-q$ in
the $n$th identity in Slater's list.
The designation ``\textbf{S}.${n}$c" means that a corrected form of the $n$th identity
in Slater's list is being presented.
The earliest known occurrence of each identity is indicated.

\subsection{$q$-series Expansions of Constants and Theta Functions}
\begin{align}
0&= \sum_{n=0}^\infty \frac{ (-1)^n q^{n(n-1)/2}}{(q)_n}
   &\mbox{(\eqref{eul} with $z=-1$)} \\
1&= \sum_{n=0}^\infty \frac{ (-1)^n q^{n^2}}{(q;q^2)_{n+1} }
  &\mbox{(Rogers~\cite[p. 333 (4)]{R17})} \\
1 & = \sum_{n=0}^\infty \frac{ q^{n(n+1)} (q^2;q^2)_{n+1}}{
(-q^3;q^3)_{n+1}(q)_n } &\mbox{(M.-S.-Z.~\cite[Eq. (2.3)]{MSZ08})}\\
2&=\sum_{n=0}^\infty \frac{ q^{n(n-1)/2} }{(-q)_n}\\
f(-q) &= \sum_{n=0}^\infty (-1)^n q^{n(3n-1)/2} (1+q^n)
    &\mbox{(Euler; \textbf{S. 1}}) \\
 \frac{1}{f(-q)} &= \sum_{n=0}^\infty \frac{ q^{n^2}}{(q)_n^2} &\mbox{(Euler~\cite{E1748}; \eqref{cauchy}
 with $z=q$)}
\end{align}
\begin{multline}
\s(-q) = \sum_{n=0}^\infty \frac{(-1)^n q^{n(n+1)/2} (q)_n}{(-q)_n}\\
   \mbox {(Starcher~\cite[p. 805, (3.6)]{S31}; \eqref{qGauss} with $a=-c=q$)}
\end{multline}
\begin{multline}
\frac{1}{\s(-q)}  = \sum_{n=0}^\infty \frac{ q^{n(n+1)/2} (-1)_n}{(q)_n^2}\\
 \mbox{(Starcher~\cite[p. 805, (3.7)]{S31}; \eqref{qGauss} with $a=-1, c=q$)}
\end{multline}

\begin{align}
\ts(q) & = 1 - \sum_{n=1}^\infty \frac{ (-1)^n q^{n^2+n-1} (1-q)}{(q^2;q^2)_n (1-q^{2n-1})}
& \mbox{(Starcher~\cite[p. 807, (3.14)]{S31})} \\
\frac{1}{\psi(q)}& = \sum_{n=1}^\infty \frac{ (-1)^n (q;q^2)_n q^{n^2}}{ (q^2;q^2)_n^2}
&\mbox{(Ramanujan~\cite[p. 84, Entry 4.2.6]{AB07})}\\
(-q)_\infty & = \sum_{n=0}^\infty \frac{ q^{n(3n-1)/2} (1+q^{2n}) (-q)_{n-1}   }{(q)_n  }
& \mbox{(Starcher~\cite[p. 809, (3.29)]{S31})}
\end{align}

\subsection{Mod 2 Identities}
\begin{align}
\frac{ f(-q,-q) }{f(-q)}
&= \sum_{n=0}^\infty \frac{(-1)^n q^{n^2}}{(q^2;q^2)_n}
   &\mbox{(\eqref{eul} with $z=-\sqrt{q}$; \textbf{S. 3}) } \\
\frac{ f(-q,-q)}{\ts(-q)}
&=  \sum_{n=0}^\infty \frac{(-1)^n q^{n^2} (-q;q^2)_n }{(q^4;q^4)_n} \notag\\
   &\mbox{(Ramanujan~\cite[p. 104, Entry 5.3.6]{AB07}; \textbf{S. 4})} 
   \end{align}
   \begin{align}
\frac{ f(1,q^2) }{ \ts(-q)  }
    &=\sum_{n=0}^\infty \frac{q^{n(n-1)} (-q;q^2)_n}{(q)_{2n}}
    &\mbox{\eqref{qGauss}} \label{A7t}
\\
   &=
   2\sum_{n=0}^\infty \frac{ q^{n(n+1)} (-q;q^2)_n}{(q)_{2n+1} }
   &\mbox{\eqref{qGauss}} \label{F3t}
\end{align}

\subsection{Mod 3 Identities}
\begin{align}
\frac{ f(-q) }{ \s(-q) }
   &= \sum_{n=0}^\infty \frac{q^{n(n+1)/2}} {(q)_n} &\mbox{(\eqref{eul} with $z=q$;
   \textbf{S. 2}) } \\
\frac{ f(-q) }{ \s(-q^2)}
&=    \sum_{n=0}^\infty \frac{ (-1)^n q^{n(2n+1)} } {(-q;q^2)_{n+1}
   (q^2;q^2)_n}
         &\mbox{(\textbf{S. 5})} \\
\frac{ f(q,q^2) }{ f(-q) }
&=   \sum_{n=0}^\infty \frac{ q^{n^2} (-1)_n}{(q)_n (q;q^2)_n}
    &\mbox{(Ramanujan~\cite[p. 85, Ent. 4.2.8]{AB07}; \textbf{S. 6}c)}  \\
    &= \sum_{n=0}^\infty \frac{ q^{n^2} (-q)_n}{ (q)_n (q;q^2)_{n+1} }
    &\mbox{(Ramanujan~\cite[p. 86, Entry 4.2.9]{AB07})}\\
\frac{ f(q,q^2) }{ \ts(q) }
&= \sum_{n=0}^\infty \frac{ q^{2n^2} (q;q^2)_n^2}{ (q^2;q^2)_{2n} }
 &\mbox{(Ramanujan~\cite[p. 102, Entry 5.3.3]{AB07}) }\\
\frac{f(-q, q^2) }{f(-q) }
&=
 \sum_{n=0}^\infty   \frac{ q^{n(n+1)}(-1;q^2)_{n}  } {(q)_{2n}}
     &\mbox{(Ramanujan~\cite[p. 86, Ent. 4.2.10]{AB07}; \textbf{S. 48})}
\end{align}

\subsection{Mod 4 Identities}
\begin{align}
\frac{ f(-q,-q^3) }{f(-q) }
     &=\sum_{n=0}^\infty \frac{ q^{n(n+1)} }{(q^2;q^2)_n}
        &\mbox{(\eqref{eul} with $z=q$; \textbf{S. 7})}  \\
\frac{ f(-q^2,-q^2)}{ \ts(-q) }
&=
   \sum_{n=0}^\infty \frac{ q^{n^2} (q;q^2)_n }{ (q^4;q^4)_n }
  &\mbox{(\eqref{qGauss} with $a^2 = -c =q$; \sn{4-})}  \\
\frac{ f(-q,-q^3) }{\s(-q) }
&=\sum_{n=0}^\infty \frac{q^{n(n+1)/2}  (-q)_n }{(q)_n}
     &\mbox {(\eqref{leb} with $a=-q$; \textbf{S. 8})} \\
    &=  \sum_{n=0}^\infty \frac{q^{n(n+1)}  (-q;q^2)_n }{(q)_{2n+1} }
     &\mbox{(\eqref{qGauss} with $-aq=c=q^{3/2}$; \sn{51})}  
     \end{align}
     \begin{align}
\frac{ f(-q^2,-q^2) }{\s(-q) }
&=\sum_{n=0}^\infty \frac{ q^{n(n+1)/2}   (-1)_n} {(q)_n} \notag\\
    &    \mbox{(\eqref{leb}; Ramanujan\cite[p. 38, Entry 1.7.14]{AB07}; \sn{12})}\\
     &=\sum_{n=0}^\infty \frac{ q^{n(n+1)/2} (-q)_{n+1} }{(q)_n } \notag\\
    &\mbox{(\eqref{leb} with $a=-q^2$)}  \\
    &= \sum_{n=0}^\infty \frac{ q^{n^2}   (-q^2;q^2)_n} {(q)_{2n+1} } \notag\\
      &\mbox{(\eqref{qGauss}; Ramanujan~\cite[p. 37, Entry 1.7.13]{AB07})}\\
    &= \sum_{n=0}^\infty \frac{ q^{n^2}   (-1;q^2)_n} {(q)_{2n} } \notag\\
   &\mbox{(\eqref{qGauss} with $a=-1, c=\sqrt{q}$)}
\end{align}
\begin{align}
\frac{ f(q,q^3)}{ f(-q^2) }
&= \sum_{n=0}^\infty \frac{q^{n(2n+1)}}{(q)_{2n+1}}
      &\mbox{(Jackson~\cite[p. 179, line -3]{J28}; \sn{9})}
 \end{align}
 \begin{align}
\frac{ f(q,q^3) }{ \ts(-q) }
&= \sum_{n=0}^\infty \frac{q^{n^2} (-1)_{2n} }{(q^2;q^2)_{n} (q^2;q^4)_n}   &\mbox{(\sn{10c})} \\
\frac{f(q,-q^3) }{\ts(-q) }
&=
 \sum_{n=0}^\infty   \frac{q^{n^2} (-1;q^4)_n (-q;q^2)_n } {(q^2;q^2)_{2n}}
    &\mbox{(\sn{66})}  \\
\frac{f(-q,q^3)}{\ts(-q)}
&=
  \sum_{n=0}^\infty   \frac{q^{n(n+2)} (-1;q^4)_n (-q;q^2)_n } {(q^2;q^2)_{2n}}
 &\mbox{(\sn{67})}  \\
\frac{ f(q,q^3) }{ \s(-q^2) }
&= \sum_{n=0}^\infty \frac{ q^{n(n+1)}(-q)_{2n}}{(q;q^2)_{n+1} (q^4;q^4)_n} &\mbox{(\sn{11})} \\
\frac{ f(q,-q^3) }{\s(-q^2) }
&=
   \sum_{n=0}^\infty   \frac{ q^{n(n+1)} (-q^2;q^4)_n} {(q)_{2n+1} (-q;q^2)_n}
     &\mbox{(\sn{65})} \\
\frac{f(-q^2,-q^2) }{ \s(-q^2) }
&=  \sum_{n=0}^\infty \frac{ q^{n(n+1)} (q^2;q^2)_{n+1}}{(-q^3;q^3)_{n+1} (q)_n} &\mbox{(M.-S.-Z.~\cite[Eq. (2.3)]{MSZ08})}
  \end{align}

\subsection{Mod 5 Identities}
\begin{align}
H(q)=
\frac{f(-q,-q^4) }{f(-q) }
&= \sum_{n=0}^\infty \frac{q^{n(n+1)} } {(q)_n}
&\mbox{(Rogers~\cite[p. 330 (2)]{R94}; \sn{14})} \\
  &= \sum_{n=0}^\infty \frac{ q^{n^2} (q;q^2)_{n+1}}{(q)_n (q;q^2)_n}
& \mbox{(M.-S.-Z. \cite[Eq. (2.5)]{MSZ08})}
\end{align}
\begin{align}
G(q)=
\frac{f(-q^2,-q^3) }{f(-q) }
&=\sum_{n=0}^\infty \frac{q^{n^2}} {(q)_n}
     &\mbox{(Rogers~\cite[p. 328 (2)]{R94}; \sn{18})} \\
&     = \sum_{n=0}^\infty \frac{ q^{n(n+1)} (-q)_{n+1} } {(q^2;q^2)_n}
& \mbox{(M.-S.-Z.  \cite[Eq. (2.6)]{MSZ08}) }
\end{align}
\begin{align}
\chi(-q) H(q) =
\frac{f(-q,-q^4) }{ f(-q^2) }
&=
    \sum_{n=0}^\infty \frac{(-1)^n q^{n(3n-2)}} {(-q;q^2)_n (q^4;q^4)_n}
\notag \\
            & \mbox{(Rogers~\cite[p. 330 (5)]{R17}; \sn{15})} \\
&  =  \sum_{n=0}^\infty\frac{(-1)^n q^{n(3n+2)}} {(-q;q^2)_{n+1} (q^4;q^4)_n}
\notag\\
            & \mbox{(Ramanujan~\cite[p. 252, Eq. (11.2.7)]{AB05})}  \\
\chi(-q)G(q)  =
\frac{ f(-q^2,-q^3) }{ f(-q^2) }
&=
  \sum_{n=0}^\infty \frac{(-1)^n q^{3n^2}} {(-q;q^2)_n (q^4;q^4)_n}\notag\\
    &\mbox{(Rogers~\cite[p. 339, Ex. 2]{R94}; \sn{19})}
            \\
\chi(-q^2) H(q)
=\frac{ f(-q,-q^4)}{ \ts(-q) }
  &=\sum_{n=0}^\infty \frac{q^{n(n+2)}} {(q^4;q^4)_n} \notag \\
          &\mbox{(Rogers~\cite[p. 331, abv (7)]{R17}; \sn{16})}  \\
\chi(-q^2) G(q) =\frac{ f(-q^2,-q^3) }{ \ts(-q) }
&=
     \sum_{n=0}^\infty \frac{q^{n^2}} {(q^4;q^4)_n} \notag\\
  & \mbox{(Rogers~\cite[p. 330]{R94}; \sn{20})}
\end{align}
\begin{align}
\frac{ f(1,q^5) }{ \ts(q) }
      &=   2 \sum_{n=0}^\infty   \frac{(-1)^n q^{n(n+2)} (q;q^2)_n} {(-q;q^2)_{n+1} (q^4;q^4)_n}
      \notag\\
      &\mbox{(M.-S.-Z.~\cite[Eq. (2.7)]{MSZ08}) }\\
\frac{ f(q,q^4) }{\ts(q) }
&=
      \sum_{n=0}^\infty \frac{(-1)^n q^{n(n+2)}  (q;q^2)_n }
  {(-q;q^2)_n (q^4;q^4)_n} \notag\\
  &\mbox{(B.-M.-S.~\cite[Eq. (2.17)]{BMS07})}   \\
\frac{ f(q^2,q^3) }{ \ts(q) }
&=
      \sum_{n=0}^\infty \frac{(-1)^n q^{n^2}  (q;q^2)_n } {(-q;q^2)_n (q^4;q^4)_n} &\mbox{(\sn{21})}\\
 \frac{H(q) \chi(-q)}{\chi(-q^2) } =\frac{ f(-q,-q^4) }{\s(-q^2) }
&=
    \sum_{n=0}^\infty \frac{q^{n(n+1)}} {(q^2;q^2)_n (-q;q^2)_{n+1}} &\mbox{(\sn{17})}  \\
 \frac{G(q) \chi(-q)}{\chi(-q^2) } =\frac{f(-q^2,-q^3) }{ \s(-q^2) }
&=
            \sum_{n=0}^\infty \frac{q^{n(n+1)}} {(q^2;q^2)_n (-q;q^2)_{n}}&\mbox{(\sn{99-})}
 \end{align}

 \subsection{Mod 6 Identities}
 \begin{align}
\frac{ f(-q^3,-q^3)}{ f(-q^2) }
&=
   \sum_{n=0}^\infty \frac{ q^{2n^2} (q;q^2)_n}{(-q)_{2n} (q^2;q^2)_n} \notag\\
    &\mbox{(B.-M.-S. \cite[Eq. (2.13)]{BMS07})}\\
\frac{ f(-q,-q^5) }{ \ts(-q) }
  & = \sum_{n=0}^\infty \frac {q^{n(n+2)} (-q;q^2)_n} {(q^4;q^4)_n} \notag\\
     & \mbox{(Ramanujan~\cite[p. 87, Entry 4.2.11]{AB07}, Stanton~\cite{S01})}
     \label{RamStanton}
  \end{align}
 \begin{align}
\frac{ f(-q^3,-q^3) }{\ts(-q) }
&=
   \sum_{n=0}^\infty \frac {q^{n^2} (-q;q^2)_n} {(q^4;q^4)_n}&
   \mbox{(Ramanujan~\cite[p. 85, Ent. 4.2.7]{AB07}; \sn{25})}\\
\frac{ f(-q,-q^5)}{ \ts(q) }
&=
  \sum_{n=0}^\infty \frac{ (-1)^n q^{n^2}}{(q^2;q^2)_n}
       &\mbox{(\eqref{eul} with $z=-\sqrt{q}$; \sn{23})}
 \end{align}
 \begin{align}
\frac{f(-q,-q^5) }{ \s(-q) }
  &= \sum_{n=0}^\infty \frac{ q^{n(n+1)}  (-q)_n} {(q;q^2)_{n+1} (q)_n} \notag
\\ &
\mbox{(Ramanujan~\cite[p. 87, Entry 4.2.12]{AB07}, Bailey~\cite[p. 72, Eq. (10)]{B36};
\sn{22})}
\end{align}
\begin{align}
\frac{ f(-q^3,-q^3) }{ \s(-q) }
     &=      \sum_{n=0}^\infty \frac{ q^{n^2}  (-q)_{n}} {(q;q^2)_{n+1} (q)_n}&
     \mbox{(\sn{26})} \\
  &=    \sum_{n=0}^\infty \frac{ q^{n^2}  (-1)_{n}} {(q;q^2)_{n} (q)_n} &\mbox{(M.-S.-Z.~\cite{MSZ08})}
\end{align}
 \begin{align}
 \frac{ f(q,q^5) }{ f(-q^2) }
    &=      \sum_{n=0}^\infty \frac{ q^{2n(n+1)} (-q;q^2)_{n}} {(q)_{2n+1} (-q^2;q^2)_n}
    &\mbox{(\sn{27})} \\
    &= \sum_{n=0}^\infty \frac{ q^{2n^2} (-q^{-1};q^2)_n (-q^3;q^2)_n}
{(q^2;q^2)_{2n}} \notag\\
    &\mbox{(\eqref{ram1} with $a=q$; Stanton~\cite{S01})}  \\
    &= \sum_{n=0}^\infty \frac{ q^{2n(n-1)} (-q;q^2)_n}{(q)_{2n} (-1;q^2)_{n+1}}
   \end{align}
\begin{align}
\frac{ f(q^3,q^3) }{ f(-q^2)}&=\sum_{n=0}^\infty \frac{ q^{2n^2}
(-q;q^2)_n}{(q;q^2)_n (q^4;q^4)_n} \notag\\
&\mbox{(B.-M.-S.~\cite[Eq. (2.13)]{BMS07})}\\
\frac{ f(q,q^5) }{ \s(-q^2)}&=\sum_{n=0}^\infty \frac{q^{n(n+1)}
(-q^2;q^2)_{n}}{(q)_{2n+1}}\notag
\\
&\mbox{(Ramanujan~\cite[p. 254, (11.3.5)]{AB05},~\cite[p. 88, Ent. 4.2.13]{AB07}; \sn{28})} \\
\frac{f(q^3,q^3) }{\s(-q^2)} &=\sum_{n=0}^\infty \frac{q^{n(n+1)}
(-1;q^2)_n}{(q)_{2n}} &\mbox{(\sn{48-})}
 \\
\frac{ f(q^2,q^4)}{\ts(-q) } &=\sum_{n=0}^\infty \frac{q^{n^2} (-q;q^2)_{n} } {(q)_{2n}} \notag\\
       &\mbox{(Ramanujan~\cite[p. 178 (3.1)]{A81},~\cite[p. 101, Entry 5.3.2]{AB07}; \sn{29})} \\
\frac{1}{(q,q^4,q^5;q^6)_\infty} &= \sum_{n=0}^\infty \frac{ q^{n(3n-1)/2} (q^2;q^6)_n}{(q)_{3n}}\notag\\
&\mbox{(special case of \eqref{qGauss}; cf. Corteel, Savage, and Sills~\cite[Eq. (1)]{CS08})}\\
\frac{1}{(q,q^2,q^5;q^6)_\infty} &= \sum_{n=0}^\infty \frac{ q^{n(3n+1)/2} (q^4;q^6)_n}{(q)_{3n+1}}\notag\\
&\mbox{(special case of \eqref{qGauss}; cf. Corteel, Savage, and Sills~\cite[Eq. (2)]{CS08})}
\end{align}

\subsection{The Rogers-Selberg Mod 7 Identities}
\begin{align}
\frac{ f(-q,-q^6) }{ f(-q^2) }
    &=  \sum_{n=0}^\infty   \frac{ q^{2n(n+1)} } {(q^2;q^2)_n (-q)_{2n+1}}
   &\mbox{(Rogers~\cite[p. 331 (6)]{R17}; \sn{31})}
   \\
\frac{ f(-q^2,-q^5) }{ f(-q^2) }
    &=  \sum_{n=0}^\infty   \frac{ q^{2n(n+1)} } {(q^2;q^2)_n (-q)_{2n}}
       &\mbox{(Rogers~\cite[p. 342]{R94}; \sn{32})}\\
\frac{ f(-q^3,-q^4) }{ f(-q^2) }
  &=   \sum_{n=0}^\infty   \frac{ q^{2n^2} } {(q^2;q^2)_n (-q)_{2n}}
    &\mbox{(Rogers~\cite[p. 339]{R94}; \sn{33})}
\end{align}

\subsection{Mod 8 Identities}
\subsubsection{Triple Products}
\begin{align}
\frac{ f(-q,-q^7)}{ \ts(-q) }
&=
  \sum_{n=0}^\infty   \frac{q^{n(n+2)} (-q;q^2)_n  } {(q^2;q^2)_n}\notag \\
    &\mbox{(Ramanujan~\cite[p. 37, Entry 1.7.12]{AB07}; \sn{34})}  \\
 \frac{1}{(q^2,q^3,q^7;q^8)_\infty} &=
  \sum_{n=0}^\infty   \frac{q^{n(n+1)} (-q;q^2)_n  } {(q^2;q^2)_n }\notag \\
     &\mbox{(G\"ollnitz~\cite[(2.24)]{G67};
 \eqref{leb} with $a=-q^{1/2}$)} \label{OddGollnitz2}  \\
\frac{1}{(q,q^5,q^6;q^8)_\infty} &=
  \sum_{n=0}^\infty   \frac{q^{n(n+1)} (-q^{-1};q^2)_n  } {(q^2;q^2)_n }\notag
 \\
    &\mbox{(G\"ollnitz~\cite[(2.22)]{G67};
  \eqref{leb} with $a=-q^{-1/2}$)} \label{OddGollnitz1} \\
\frac{ f(-q^3,-q^5)}{\ts(-q) }
&=   \sum_{n=0}^\infty   \frac{q^{n^2} (-q;q^2)_n  } {(q^2;q^2)_n }\notag \\
  &\mbox{(Ramanujan~\cite[p. 36, Entry 1.7.11; p. 88, Entry 4.2.15]{AB07}; \sn{36})} \\
\frac{ f(-q,-q^7) }{ \s(-q) }
    &=   \sum_{n=0}^\infty   \frac{  q^{n(n+3)/2} (-q;q^2)_n (-q)_n}
  {(q)_{2n+1}} \notag\\
   &\mbox{(Ramanujan~\cite[p. 32, Entry 1.7.6]{AB07}; \sn{35})}  \\
\frac{ f(-q^2,-q^6) }{ \s(-q) }
  &=
   \sum_{n=0}^\infty \frac{ q^{n(n+1)/2}
   (-q^2;q^2)_{n}  } {(q;q^2)_{n+1} (q)_n}  \notag \\
   &\mbox{(Ramanujan~\cite[p. 32, Entry 1.7.5]{AB07})} \label{D6s1} \\
\frac{ f(-q^3,-q^5) }{\s(-q) } &=
\sum_{n=0}^\infty   \frac{  q^{n(n+1)/2} (-q;q^2)_n (-q)_n} {(q)_{2n+1}}
\notag\\
        &\mbox{(Ramanujan~\cite[p. 34, Entry 1.7.8]{AB07}; \sn{37})}  \\
\frac{ f(-q^4,-q^4)}{\s(-q)}
     &=   \sum_{n=0}^\infty \frac{ q^{n(n+1)/2} (-1;q^2)_n}{(q)_n (q;q^2)_n}
\notag\\
&\mbox{(Ramanujan~\cite[p. 31, Entry 1.7.4]{AB07})} \\
\frac{ f(q,q^7)}{f(-q^2) }
&=   \sum_{n=0}^\infty   \frac{q^{2n(n+1)} } {(q)_{2n+1}}
   &\mbox{(\sn{38})}\\
\frac{ f(q^3,q^5) }{ f(-q^2) }
  &=   \sum_{n=0}^\infty   \frac{q^{2n^2} } {(q)_{2n}} \notag\\
  &\mbox{(Jackson~\cite[p. 170, 5th Eq.]{J28}; \sn{39})}
  \end{align}
  \begin{align}
  \frac{ f(q,q^7) }{ \ts(-q^2) }
    &=   \sum_{n=0}^\infty \frac{ q^{2n^2} (-q^{-1};q^4)_{n} (-q^5;q^4)_n }
   {(q^8;q^8)_n (q^2;q^4)_n} \notag\\
    &\mbox{(\eqref{ram2} with  $a=q^{3/2}$)} \\
\frac{ f(q^3,q^5) }{ \ts(-q^2) }
    &=   \sum_{n=0}^\infty \frac{ q^{2n^2} (-q;q^2)_{2n} }
 {(q^8;q^8)_n (q^2;q^4)_n} \notag  \\
     &\mbox{(\eqref{ram2} $a=q^{\frac 12}$; Gessel-Stanton~\cite[p. 197, Eq. (7.24)]{GS83})}
 \label{gs8}\\
\frac{ f(-q,-q^7)}{ \s(-q^4)}
   &= \sum_{n=0}^\infty \frac{ q^{2n(n+1)} (-q^4;q^4)_n (q;q^2)_{2n+1} }
    {(q^4;q^4)_{2n+1}}  \notag\\
     &\mbox{(\eqref{q2ndGauss} with $a=q^{3/4}, b=q^{5/4}$); Gessel-Stanton~\cite[p. 197, Eq. (7.25)]{GS83})}  \\
 \frac{ f(-q^3,-q^5)}{ \s(-q^4)}
   &= \sum_{n=0}^\infty \frac{ q^{2n(n+1)} (-q^4;q^4)_n (q^{-1};q^2)_{2n} }
    {(q^4;q^4)_{2n}} \notag \\
     &\mbox{(\eqref{q2ndGauss} with $a=q^{-1/4}, b=q^{1/4}$)}
 \end{align}

 \subsubsection{Quintuple products}
 \begin{align}
\frac{ Q(q^4,q )}{f(-q) }
 &= \sum_{n=0}^\infty \frac{ q^{n^2} (-1;q^2)_n }{(q)_{2n}}
&\mbox{(\eqref{qGauss} with $a=-1$, $c=\sqrt{q}$; \sn{47})}
 \end{align}

\subsection{Mod 9 Identities}
\begin{align}
\frac{ f(-q,-q^8) }{ f(-q^3) }
&=
\sum_{n=0}^\infty   \frac{q^{3n(n+1)} (q)_{3n+1}  } {(q^3;q^3)_n
  (q^3;q^3)_{2n+1}} \\  &\mbox{(Bailey~\cite[p. 422,  Eq. (1.7)]{B47}; \sn{40c})}
  \notag\\
\frac{f(-q^2,-q^7) }{ f(-q^3) }
&=
   \sum_{n=0}^\infty   \frac{q^{3n(n+1)} (q)_{3n} (1-q^{3n+2})  } {(q^3;q^3)_n
  (q^3;q^3)_{2n+1}} \\ &\mbox{(Bailey~\cite[p. 422,  Eq. (1.8)]{B47}; \sn{41c})}
  \notag \\
\frac{ f(-q^4,-q^5) }{ f(-q^3) }
 &=
   \sum_{n=0}^\infty   \frac{q^{3n^2} (q)_{3n}  } {(q^3;q^3)_n
  (q^3;q^3)_{2n}} \\
 &\mbox{(\eqref{ram1} $a=-q^{\frac 12}$; Bailey~\cite[p. 422, Eq. (1.6)]{B47};
  \sn{42c})} \notag\\
\frac{f(-q^3,-q^6) }{\ts(-q) }
&=
\sum_{n=0}^\infty \frac{ q^{n^2} (-1;q^6)_{n} (-q;q^2)_n}
   { (-1;q^2)_{n} (q^2;q^2)_{2n}}  \\
    &\mbox{(M.-S. \cite[p. 767, Eq. (1.13)]{MS07})}  \notag  \\
\frac{ f(-q^3,q^6) }{ \ts(-q) }
&=
1+\sum_{n=1}^\infty \frac
    {q^{n^2} (q^6;q^6)_{n-1} (-q; q^2)_{n}  }
    {(q^2;q^2)_{2n} (q^2;q^2)_{n-1}}
    &\mbox{(\sn{113})}
\end{align}

\subsection{Mod 10 Identities}
\subsubsection{Triple Products}
\begin{align}
\frac{f(-q,-q^9) }{\s(-q) }
  &= \sum_{n=0}^\infty   \frac{  q^{n(n+3)/2} (-q)_{n}} {(q;q^2)_{n+1}
  (q)_{n}} \notag\\
&\mbox{(Rogers~\cite[p. 330 (4), line 2]{R17}; \sn{43})}  \\
\frac{ f(-q^3,-q^7) }{ \s(-q) }
&=
    \sum_{n=0}^\infty   \frac{  q^{n(n+1)/2} (-q)_{n}} {(q;q^2)_{n+1}
  (q)_{n}} \notag\\
 &\mbox{(Rogers~\cite[p. 330 (4), line 1]{R17}; \sn{45})}  \\
\frac{ f(-q^5,-q^5) }{ \s(-q) }
  &= \sum_{n=0}^\infty   \frac{  q^{n(n+1)/2} (-1)_{n}} {(q;q^2)_{n}
  (q)_{n}}  \notag\\
  &\mbox{(Rogers~\cite[p. 330 (4), line 3, corrected]{R17})} \label{C1s1}  \\
\frac{H(q^2)}{\chi(-q)} = \frac{ f(-q^2,-q^8) }{ f(-q) }
  &= \sum_{n=0}^\infty   \frac{  q^{3n(n+1)/2} } {(q;q^2)_{n+1}
  (q)_{n}} \notag\\
&\mbox{(Rogers~\cite[p. 330 (2), line 2]{R17}; \sn{44})}  \\
\frac{G(q^2)}{\chi(-q)} =\frac{ f(-q^4,-q^6) }{ f(-q) }
&=   \sum_{n=0}^\infty   \frac{  q^{n(3n-1)/2} } {(q;q^2)_{n}
  (q)_{n}} \notag\\
&\mbox{(Rogers~\cite[p. 341, Ex. 1]{R94}; \sn{46})}  \\
&=\sum_{n=0}^\infty   \frac{  q^{n(3n+1)/2} } {(q;q^2)_{n+1}
  (q)_{n}}\label{C5p}\notag\\
   &\mbox{(Rogers~\cite[p. 330 (2), line 1]{R17})}
\end{align}

\subsubsection{Quintuple Products}
\begin{align}
\frac{G(q^2)}{\chi(-q)}=\frac{ Q(q^5,-q)}{ \s(-q) }
  &=  \sum_{n=0}^\infty   \frac{ q^{n(3n+1)/2} (-q)_n} {(q)_{2n+1}}
             &\mbox{(\sn{62})} \\
  &=   \sum_{n=0}^\infty   \frac{ q^{n(3n-1)/2} (-q)_n} { (q)_{2n}} \notag\\
    &\mbox{(Rogers~\cite[p. 332 (12), 1st Eq.]{R17})} \\
\frac{H(q^2)}{\chi(-q)} =\frac{ Q(q^5,-q^2) }{\s(-q) }
  &=\sum_{n=0}^\infty   \frac{ q^{3n(n+1)/2} (-q)_n} {(q)_{2n+1}} \notag\\
  &\mbox{(Rogers~\cite[p. 332 (12), 2nd Eq.]{R17}; \sn{63})}
\end{align}

\addtocounter{subsection}{1}

\subsection{Mod 12 identities}
\subsubsection{Triple Products}
\begin{align}
\frac{ f(-q,-q^{11}) }{ f(-q) }
    &=\sum_{n=0}^\infty
     \frac{ q^{n(n+2)}(-q^2;q^2)_{n} (1-q^{n+1}) } {(q)_{2n+2} }
   &\mbox{(\sn{49c})}\\
\frac{ f(-q^2,-q^{10}) }{ f(-q) }
    &=  \sum_{n=0}^\infty   \frac{  q^{n(n+2)} (-q;q^2)_{n}}
  {(q)_{2n+1}} \notag \\
   &\mbox{(Ramanujan~\cite[p. 65, Entry 3.4.4]{AB07}; \sn{50})}\\
\frac{ f(-q^3,-q^9) }{ f(-q) }
    &=  \sum_{n=0}^\infty \frac{ q^{n(n+1)} (-q^2;q^2)_n}{(q)_{2n+1}}
  &\mbox{(\sn{28})}\\
\frac{ f(-q^4,-q^8) }{ f(-q) }
&=
    \sum_{n=0}^\infty \frac{ q^{n(n+1)} (-q;q^2)_n}{(q)_{2n+1}} \notag\\
    &\mbox{(\eqref{qGauss} with $aq=-c=q^{3/2}$; \sn{51})}  \\
\frac{ f(-q^5,-q^7) }{ f(-q) }
&=
\sum_{n=0}^\infty   \frac{q^{n^2} (-q^2;q^2)_{n-1} (1+q^n)  } {(q)_{2n}}
   &\mbox{(\sn{54c})} \\
\frac{ f(-q^6,-q^6) }{ f(-q) }
&=
  \sum_{n=0}^\infty   \frac{  q^{n^2} (-q;q^2)_{n}}
  {(q)_{2n}}\notag\\
&\mbox{(Ramanujan~\cite[Ent. 11.3.1]{AB05}; \sn{29})}  \\
\frac{ f(-q,-q^{11}) }{ f(-q^4) }
   &=
     \sum_{n=0}^\infty   \frac{ q^{4n(n+1)} (q;q^2)_{2n+1} }
   {(q^4;q^4)_{2n+1}} &\mbox{(\sn{55})} \\
\frac{ f(-q^5,-q^7) }{ f(-q^4) }
   &=   \sum_{n=0}^\infty   \frac{ q^{4n^2} (q;q^2)_{2n}} {(q^4;q^4)_{2n}}\
 \notag \\
   &\mbox{(\eqref{ram1} with $a=-\sqrt{q}$; \sn{53})} \\
\frac{ f(-q^3,-q^9) }{\ts(-q) }
&=
   \sum_{n=0}^\infty \frac{ q^{2n(n+1)} (-q;q^2)_n}{(q;q^2)_{n+1} (q^4;q^4)_n}
   &\mbox{(\sn{27})}\\
\frac{ f(-q^4,-q^8) }{ \ts(-q) }
&=
  \sum_{n=0}^\infty \frac{ q^{n(2n-1)} (-q;q^2)_n}{ (q^2;q^2)_{n} (q^2;q^4)_n}
   &\mbox{(\sn{52})}
\end{align}
\begin{align}
\frac{ f(-q^5,-q^7) }{ \ts(-q^3) }
&=
  \sum_{n=0}^\infty \frac{ q^{3n^2} (q^2;q^2)_{3n} }
  {(q^{12};q^{12})_n (q^3;q^3)_{2n} } \notag\\
     &\mbox{(\eqref{ram2} $a=-q^{1/3}$; Dyson~\cite[p. 9, Eq. (7.5)]{B49})} \\
\frac{ f(-q,-q^{11})}{ \ts(-q^3) }
&=
 \sum_{n=0}^\infty \frac{ q^{3n^2}(q^2;q^2)_{3n+1} }
  {(q^{12};q^{12})_n (q^3;q^3)_{2n} (q^{6n}-q^2) } \notag\\
     &\mbox{(Dyson~\cite[p. 9, Eq. (7.6)]{B49})}
  \end{align}
  \begin{align}
\frac{ f(q,q^{11}) }{ f(-q) }
&=
     \sum_{n=0}^\infty   \frac{  q^{n(n+2)}(-q)_{n} } {(q;q^2)_{n+1}
  (q)_{n+1} } &\mbox{(\sn{56})} \\
 \frac{ f(q^3, q^9)}{f(-q)} &= \frac{1+q^3}{(1-q)(1-q^2)} +
  \sum_{n=1}^\infty \frac{ q^{n(n+2)} (-q)_{n-1} (-q)_{n+2} }{ (q)_{2n+2} }
  \qquad\mbox{\cite[Eq. (2.10)]{MSZ08}}
  \end{align}
  \begin{align}
\frac{ f(q^5,q^7) }{ f(-q) }
&= 1 + \sum_{n=1}^\infty   \frac{  q^{n^2} (-q)_{n-1} } {(q;q^2)_{n}
          (q)_{n} } &\mbox{(\sn{58c})}  \\
\frac{ f(q,q^{11}) }{ f(-q^4)  }
&=  \sum_{n=0}^\infty   \frac{q^{4n(n+1)} (-q;q^2)_{2n+1}}
  {(q^4;q^4)_{2n+1}}&\mbox{(\sn{57})} \\
\frac{f(q^3,q^9) }{f(-q^4) }
&= \sum_{n=0}^\infty \frac{ q^{n(n+2)} (-q;q^2)_n}
  {(q^4;q^4)_n} & \mbox{(Ramanujan~\cite[p. 105, Entry 5.3.7]{AB07})}\\
\frac{ f(q^5,q^7) }{ f(-q^4) }
   &=   \sum_{n=0}^\infty   \frac{ q^{4n^2} (-q;q^2)_{2n}} {(q^4;q^4)_{2n}}
  &\mbox{(\eqref{ram1} with $a=\sqrt{q}$; \sn{53-})}
\end{align}

\subsubsection{Quintuple Products}
\begin{align}
\frac{ Q(q^6,-q)}{ \s(-q)}
&=
 \sum_{n=0}^\infty \frac{ q^{n(n+1)/2} (-1;q^3)_n (-q)_n}{ (q)_{2n} (-1)_n }  
 \\
&  \mbox{(M.-S.~\cite[p. 767, Eq. (1.22)]{MS07})}\notag\\
\frac{ Q(q^6, -q^2)}{ \s(-q)}
&=
  \sum_{n=0}^\infty \frac{ q^{n(n+1)/2} (-q^3;q^3)_n}{(q)_{2n+1}} \\
 & \mbox{(M.-S.~\cite[p. 768, Eq. (1.24)]{MS07})}\notag\\
\frac{ Q(q^6,q)}{ \s(-q)}
&=
 1+\sum_{n=1}^\infty \frac{ q^{n(n+1)/2} (q^3;q^3)_{n-1} (-q)_n}{ (q)_{2n} (q)_{n-1} } \\
& \mbox{(M.-S.~\cite[p. 768, Eq. (1.27)]{MS07})}\notag\\
\frac{ Q(q^6, q^2)}{ \s(-q)}
&=
  \sum_{n=0}^\infty \frac{ q^{n(n+1)/2} (q^3;q^3)_n (-q)_n}{(q)_{2n+1} (q)_n}\\
  &\mbox{(Dyson~\cite[p. 434, Eq. (D2)]{MS07})}\notag
\end{align}

\addtocounter{subsection}{1}

\subsection{Mod 14 Identities}
\subsubsection{Triple Products}
\begin{align}
\frac{ f(-q^2,-q^{12}) }{ f(-q) }
&=
 \sum_{n=0}^\infty   \frac{ q^{n(n+2)} } {(q;q^2)_{n+1} (q)_{n} }
     &\mbox{(Rogers~\cite[p. 329 (1)]{R17}; \sn{59})} \\
\frac{ f(-q^4, -q^{10})}{ f(-q) }
   &=    \sum_{n=0}^\infty   \frac{ q^{n(n+1)} } {(q;q^2)_{n+1} (q)_{n} } &
   \mbox{(Rogers~\cite[p. 329 (1)]{R17}; \sn{60})} \\
\frac{ f(-q^6,-q^8) }{f(-q) }
&=
\sum_{n=0}^\infty   \frac{ q^{n^2} } {(q;q^2)_{n} (q)_{n} } &
    \mbox{(Rogers~\cite[p. 341, Ex. 2]{R94}; \sn{61})}
\end{align}

\subsubsection{Quintuple Products}
\begin{align}
\frac{Q(q^7,-q) }{\s(-q) }
&=
\sum_{n=0}^\infty   \frac{q^{n(n+1)/2} (-q)_n}  { (q)_{2n}  }
    &\mbox{(\sn{81})} \\
\frac{ Q(q^7,-q^2) }{\s(-q) }
&=
 \sum_{n=0}^\infty   \frac{q^{n(n+1)/2} (-q)_n}  { (q)_{2n+1}  } 
 &\mbox{(\sn{80})} \\
\frac{ Q(q^7,-q^3) }{ \s(-q) }
 &=   \sum_{n=0}^\infty   \frac{q^{n(n+3)/2} (-q)_n}  { (q)_{2n+1}  } 
    &\mbox{(\sn{82})}
 \end{align}

\subsection{Mod 15 Identities}
\begin{align}
\frac{ f(-q,-q^{14})}{f(-q^5)}  &=
  1-\sum_{n=1}^\infty \frac{  q^{5n^2-4} (q;q^5)_{n-1} (q^4;q^5)_{n+1}}{ (q^5;q^5)_{2n} }
  &\mbox{(\eqref{ram1} with $a=-q^{13/5}$)}\\
\frac{ f(-q^2,-q^{13})}{f(-q^5)}  &=
  1-\sum_{n=1}^\infty \frac{  q^{5n^2-3} (q^2;q^5)_{n-1} (q^3;q^5)_{n+1}}{ (q^5;q^5)_{2n} }
  &\mbox{(\eqref{ram1} with $a=-q^{11/5}$)}\\
\frac{ f(-q^3,-q^{12})}{f(-q^5)}  &=
  1-\sum_{n=1}^\infty \frac{  q^{5n^2-2} (q^3;q^5)_{n-1} (q^2;q^5)_{n+1}}{ (q^5;q^5)_{2n} }
  &\mbox{(\eqref{ram1} with $a=-q^{9/5}$)}\\
\frac{ f(-q^4,-q^{11})}{f(-q^5)}  &=
  1-\sum_{n=1}^\infty \frac{  q^{5n^2-1} (q;q^5)_{n-1} (q;q^5)_{n+1}}{ (q^5;q^5)_{2n} }
  &\mbox{(\eqref{ram1} with $a=-q^{7/5}$)}\\
\frac{ f(-q^6,-q^9)}{f(-q^5)}  &=
  \sum_{n=0}^\infty \frac{  q^{5n^2} (q;q^5)_n (q^4;q^5)_n}{ (q^5;q^5)_{2n} }
  &\mbox{(\eqref{ram1} with $a=-q^{3/5}$)}\\
\frac{ f(-q^7,-q^8)}{f(-q^5)}  &=
  \sum_{n=0}^\infty \frac{  q^{5n^2} (q^2;q^5)_n (q^3;q^5)_n}{ (q^5;q^5)_{2n} }
  &\mbox{(\eqref{ram1} with $a=-q^{1/5}$)}
\end{align}

\subsection{Mod 16 Identities}
\subsubsection{Triple Products}
\begin{align}
\frac{ f(-q^2,-q^{14}) }{ \ts(-q) }
   &= \sum_{n=0}^\infty   \frac{  q^{n(n+2)} (-q;q^2)_n (-q^4;q^4)_n}
  {(-q^2;q^2)_{n+1} (q^2;q^4)_{n+1} (q^2;q^2)_n} &\mbox{(\sn{68})} \\
\frac{ f(-q^4,-q^{12})  }{\ts(-q) }
   &=  \sum_{n=0}^\infty   \frac{ q^{n(n+2)} (-q^2;q^4)_n }
  {(q;q^2)_{n+1} (q^4;q^4)_n } &\mbox{(\sn{70})}  \\
\frac{ f(-q^6,-q^{10}) }{ \ts(-q) }
   &=
      \sum_{n=0}^\infty\frac{ q^{n^2} (-q^4;q^4)_{n-1}  }
    {(q)_{2n}  (-q^2;q^2)_{n-1} }&\mbox{(\sn{71})}
    \end{align}
    \begin{multline}
\frac{ f(-q^8,-q^8) }{ \ts(-q) }
   =  \sum_{n=0}^\infty   \frac{ q^{n^2} (-q^2;q^4)_n }
  {(q;q^2)_{n}  (q^4;q^4)_n} \\
 \mbox{(S.~\cite[Eq. (5.5)]{S07};
\eqref{qBailey} $b=ic = iq^{1/2}$) }
\end{multline}
\begin{align}
\frac{ f(q^2,q^{14}) }{ \ts(-q) }
   &=
     \sum_{n=0}^\infty   \frac{(-q^2;q^2)_n  q^{n(n+2)}} {(q)_{2n+2} }
\qquad\mbox{(\sn{69})}  \\
  &=   \sum_{n=0}^\infty   \frac{ q^{n(n+1)/2 + n} (-q)_n} {(q)_{n+1}} 
     \qquad\mbox{(Gessel-Stanton~\cite[p. 196, Eq. (7.15)]{GS83})}  \\
   &= 1+\frac{q}{(1-q)(1-q^2)} + \sum_{n=2}^\infty \frac{ q^{n^2-2} (-q^2;q^2)_{n-2} (1+q^{2n+2}) }
   {(q)_{2n}} \\
   &\mbox{(M.-S.-Z.~\cite{MSZ08})}\notag\\
\frac{ f(q^6,q^{10}) }{ \ts(-q) }
   &=        1 + \sum_{n=1}^\infty   \frac{q^{n^2} (-q)_{2n-1}  }
  {(q^2;q^4)_{n} (q^2;q^2)_n }
   \qquad\mbox{(\sn{72})}  \\
&=    1+\sum_{n=1}^\infty \frac{ q^{n(n+1)/2} (-q)_{n-1} }{(q)_n}
    \qquad\mbox{(Gessel-Stanton~\cite[p. 196, Eq. (7.13)]{GS83})}
   \end{align}
\subsubsection{Quintuple Products}
\begin{align}
\frac{ Q(q^8,-q) }{ f(-q) }
&=
\sum_{n=0}^\infty \frac{ q^{2n^2} }{(q)_{2n}} &\mbox{(\sn{83})}  \\
\frac{ Q(q^8,-q^2)}{f(-q) }
&=
   \sum_{n=0}^\infty \frac{ q^{n(2n+1)} }{(q)_{2n+1}} &\mbox{(\sn{84})}
\\
&=    \sum_{n=0}^\infty \frac{ q^{n(2n-1)} }{(q)_{2n}}
&\mbox{(Starcher~\cite[p. 809, Eq. (3.29)]{S31}; \sn{85})}  \\
\frac{ Q(q^8,-q^3)}{ f(-q) }
&=
     \sum_{n=0}^\infty \frac{ q^{2n(n+1)} }{(q)_{2n+1}} &\mbox{(\sn{86})}
\end{align}

\addtocounter{subsection}{1}

\subsection{Mod 18 Identities}
\subsubsection{Triple Products}
\begin{align}
\frac{ f(-q^3,-q^{15}) }{ \s(-q) }
  &= \sum_{n=0}^\infty   \frac{q^{n(n+3)/2} (q^3;q^3)_{n} (-q)_{n+1} }
  {(q)_{2n+2} (q)_{n} } \notag\\
   &\mbox{(Dyson~\cite[p. 434 (D1)]{B47}; \sn{76})} \\
\frac{ f(-q^6,-q^{12}) }{\s(-q) }
  &=\sum_{n=0}^\infty \frac{ q^{n(n+1)/2} (-q)_n (q^3;q^3)_n}
{ (q)_n  (q)_{2n+1}} \notag\\
    &\mbox{(Dyson~\cite[p. 434 (D2)]{B47}; \sn{77c})} \\
\frac{ f(-q^9,-q^9) }{ \s(-q) }
  &=     1 + \sum_{n=1}^\infty   \frac{ q^{n(n+1)/2} (q^3;q^3)_{n-1}
(-1)_{n+1} }
  {(q)_{2n} (q)_{n-1} }\notag\\ &\mbox{(Dyson~\cite[p. 434 (D3)]{B47}; \sn{78})}
\end{align}
\subsubsection{Quintuple Products}
\begin{align}
\frac{Q(q^9,-q)}{f(-q)} &=
   \sum_{n=0}^\infty \frac{ q^{n(n+1)} (-1;q^3)_n }{(-1)_n (q)_{2n}}
&\mbox{(Loxton~\cite[p. 158, Eq. (P12)]{L84}}\\
\frac{Q(q^9,-q^2)}{f(-q)} &=
   \sum_{n=0}^\infty \frac{ q^{n^2} (-1;q^3)_n }{(-1)_n (q)_{2n}}
&\mbox{(Loxton~\cite[p. 159, (P12 bis))]{L84}}\\
  \frac{Q(q^9,-q^3)}{f(-q)} &=
   \sum_{n=0}^\infty \frac{ q^{n(n+1)} (-q^3;q^3)_n }{(-q)_n (q)_{2n+1}}
&\mbox{(M.-S.~\cite[p. 766, Eq. (1.5)]{MS07})}\\
\frac{Q(q^9,-q^4)}{f(-q)} &=
   \sum_{n=0}^\infty \frac{ q^{n(n+2)} (-q^3;q^3)_n (1-q^{n+1}) }
{(-q)_n (q)_{2n+2}}
&\mbox{(M.-S.~\cite[p. 767, Eq. (1.6)]{MS07})}\\
\frac{Q(q^9,q)}{f(-q)} &=
 1+  \sum_{n=1}^\infty \frac{ q^{n^2} (q^3;q^3)_{n-1} (2+q^n) }{(q)_{n-1} (q)_{2n}}
&\mbox{(M.-S.~\cite[p. 766, Eq. (1.7)]{MS07})}\\
\frac{Q(q^9,q^2)}{f(-q)} &=
   1+\sum_{n=1}^\infty \frac{ q^{n^2} (q^3;q^3)_{n-1} (1+2q^n) }{(q)_{n-1} (q)_{2n}}
&\mbox{(M.-S.~\cite[p. 766, Eq. (1.8)]{MS07})}\\
  \frac{Q(q^9,q^3)}{f(-q)} &=
   \sum_{n=0}^\infty \frac{ q^{n(n+1)} (q^3;q^3)_n }{(q)_n (q)_{2n+1}}
&\mbox{(Dyson~\cite[p. 433, Eq. (B3)] {B47})}\\
\frac{Q(q^9,q^4)}{f(-q)} &=
   \sum_{n=0}^\infty \frac{ q^{n(n+2)} (q^3;q^3)_n }
{(q)_n^2 (q^{n+2})_{n+1}}
&\mbox{(M.-S.~\cite[p. 766, Eq. (1.10)]{MS07})}
\end{align}

\addtocounter{subsection}{1}

\subsection{Mod 20 Identities}
\subsubsection{Triple Products}
\begin{align}
\frac{H(q^4)}{\chi(-q)} =\frac{ f(-q^4,-q^{16}) }{\ts(-q)  }
&= \sum_{n=0}^\infty   \frac{q^{n(n+2)}} {(q)_{2n+1}}
   &\mbox{(Rogers~\cite[p. 330 (3), 2nd Eq.]{R17})}
   \label{RogMod20}\\
\frac{G(q^4)}{\chi(-q)} =\frac{ f(-q^8, -q^{12})  }{ \ts(-q) }
&=   \sum_{n=0}^\infty   \frac{q^{n^2}} {(q)_{2n}}
     &\mbox{(Rogers~\cite[p. 330 (3), 1st Eq.]{R94}; \sn{79})}
\end{align}
\subsubsection{Quintuple Products}
\begin{align}
\frac{G(-q)}{\chi(-q)} =
\frac{ Q(q^{10},-q) }{ f(-q) }
&=
\sum_{n=0}^\infty \frac{ q^{n(n+1)} }{ (q)_{2n} }  \notag\\
       &\mbox{(Rogers~\cite[p. 332, Eq. (13)]{R94}; \sn{99})}
  \\
\frac{G(q^4)}{\chi(-q)} =\frac{ Q(q^{10},-q^2) }{ f(-q) }
&=
 \sum_{n=0}^\infty \frac{ q^{n^2} }{ (q)_{2n} } \notag\\
 &\mbox{(Rogers~\cite[p. 331 above (5)]{R94}; \sn{98})} \\
\frac{H(-q)}{\chi(-q)} =\frac{ Q(q^{10},-q^3) }{ f(-q) }
&=
 \sum_{n=0}^\infty \frac{ q^{n(n+1)} }{ (q)_{2n+1} } \notag\\
  & \mbox{(Rogers~\cite[p. 331, Eq. (6)]{R94}; \sn{94})} \\
\frac{H(q^4)}{\chi(-q)} =\frac{ Q(q^{10},-q^4)}{ f(-q) }
&=
  \sum_{n=0}^\infty \frac{ q^{n(n+2)} }{ (q)_{2n+1} } \notag\\
   &\mbox{(Rogers~\cite[p. 331, Eq. (7)]{R94}; \sn{96})}\\
\frac{ Q(q^{10},-q) }{ \ts(-q) }
&=
 \sum_{n=0}^\infty \frac{ q^{3n^2} (-q;q^2)_n }{ (q^2;q^2)_{2n} }
 &\mbox{(\sn{100c})} \\
\frac{ Q(q^{10},-q^3) }{ \ts(-q) }
&=
   \sum_{n=0}^\infty \frac{ q^{n(3n-2)} (-q;q^2)_n }{ (q^2;q^2)_{2n} }
   &\mbox{(\sn{95})} \\
  &=   \sum_{n=0}^\infty \frac{ q^{n(3n+2)} (-q;q^2)_{n+1} }{ (q^2;q^2)_{2n+1} }
    &\mbox{(\sn{97c})}
\end{align}

\setcounter{subsection}{23}
\subsection{Mod 24 Identities}
\begin{align}
\frac{ Q(q^{12},-q) }{\ts(-q) } &=
  \sum_{n=0}^\infty \frac{ q^{n(n+2)} (-q;q^2)_n (-1;q^6)_n }{ (q^2;q^2)_{2n}
  (-1;q^2)_n } \notag \\
   &\mbox{(M.-S.~\cite[p. 767, Eq. (1.12)]{MS07})} \\
\frac{ Q(q^{12},-q^2)}{\ts(-q) }
&=
 \sum_{n=0}^\infty \frac{ q^{n^2} (-q^3;q^6)_n}{ (q^2;q^2)_{2n} } \notag\\
        &\mbox{(Ramanujan~\cite[p. 105, Entry 5.3.8]{AB07})} \\
\frac{Q(q^{12},-q^3) }{\ts(-q) }
&=
\sum_{n=0}^\infty \frac{ q^{n^2} (-q;q^2)_n (-1;q^6)_{n} }
{ (-1;q^2)_{n} (q^2;q^2)_{2n}}  \notag\\
    &\mbox{(M.-S.~\cite[p. 767, Eq. (1.13)]{MS07})} \\
\frac{Q(q^{12},-q^4) }{\ts(-q) }
&=
 \sum_{n=0}^\infty \frac{ q^{n(n+2)} (-q^3;q^6)_n}{ (q^2;q^2)_{2n} (1-q^{2n+1})}
 \notag\\
     &\mbox{(M.-S.~\cite[p. 767, Eq. (1.14)]{MS07})} \\
\frac{ Q(q^{12},-q^5) }{\ts(-q)} &=
   \sum_{n=0}^\infty \frac{ q^{n(n+2)} (-q;q^2)_{n+1} (-q^6;q^6)_n (1-q^{2n+2})}
{(-q^2;q^2)_n (q^2;q^2)_{2n+2}}   \notag\\
&\mbox{(M.-S.~\cite[p. 767, Eq. (1.15)]{MS07})} \\
\frac{ Q(q^{12},q)}{\ts(-q)}
&=1+ \sum_{n=1}^\infty \frac{ q^{n^2} (-q;q^2)_n (q^6;q^6)_{n-1} (2+q^{2n}) }
  { (q^2;q^2)_{2n} (q^2;q^2)_{n-1} }\notag\\
  &\mbox{(M.-S.~\cite[p. 767, Eq. (1.17)]{MS07})}\\
\frac{ Q(q^{12},q^2) }{ \ts(-q) }
&=
 \sum_{n=0}^\infty \frac{ q^{n^2} (q^3;q^6)_n}{ (q;q^2)_n^2 (q^4;q^4)_{n} } \notag\\
        &\mbox{(Ramanujan~\cite[p. 105, Entry 5.3.9]{AB07})}\\
\frac{ Q(q^{12},q^3)}{\ts(-q)}
&=1+ \sum_{n=1}^\infty \frac{ q^{n^2} (-q;q^2)_n (q^6;q^6)_{n-1} (1+2q^{2n}) }
  { (q^2;q^2)_{2n} (q^2;q^2)_{n-1} } \notag\\
  &\mbox{(M.-S.~\cite[p. 767, Eq. (1.18)]{MS07})}\\
\frac{ Q(q^{12},q^4) }{ \ts(-q) }
&=
  \sum_{n=0}^\infty \frac
    {q^{n(n+2)}  (q^3;q^6)_{n} (-q; q^2)_{n+1}  }
    {(q^2;q^2)_{2n+1} (q;q^2)_n}  &\mbox{(\sn{110c})} \\
\frac{ Q(q^{12}, q^5) }{ \ts(-q) }
&=
   \sum_{n=0}^\infty \frac
    {q^{n(n+2)} (q^6;q^6)_{n} (-q; q^2)_{n+1} (1-q^{2n+2}) }
    {(q^2;q^2)_{2n+2} (q^2;q^2)_n}
    &\mbox{(\sn{108c})} \\
\frac{Q(q^{12},-q)}{\s(-q^2)}
&= \sum_{n=0}^\infty \frac{  q^{n(n+1)} (-q^2;q^2)_n (-q^3;q^6)_n}{ (q)_{2n} (-q)_{2n+1}
(-q;q^2)_n } \notag\\
 &\mbox{(M.-S.~\cite[p. 767, Eq. (1.21)]{MS07})}\\
\frac{Q(q^{12},-q^3)}{\s(-q^2)}
&= \sum_{n=0}^\infty \frac{  q^{n(n+1)} (-q^2;q^2)_n (-q^3;q^6)_n}{ (q^2;q^2)_{2n+1}
(-q;q^2)_n } \notag\\
  &\mbox{(M.-S.~\cite[p. 767, Eq. (1.23)]{MS07})}\\
\frac{Q(q^{12},-q^5)}{\s(-q^2)}
&= \sum_{n=0}^\infty \frac{  q^{n(n+3)} (-q^2;q^2)_n (-q^3;q^6)_n}{ (q^2;q^2)_{2n+1}
(-q;q^2)_n } \notag\\
 &\mbox{(M.-S.~\cite[p. 768, Eq. (1.25)]{MS07})}\\
\frac{Q(q^{12},q)}{\s(-q^2)}
&= \sum_{n=0}^\infty \frac{  q^{n(n+1)} (-q^2;q^2)_n (q^3;q^6)_n}{ (q)_{2n+1} (-q)_{2n}
(q;q^2)_n }  \notag\\
&\mbox{(M.-S.~\cite[p. 768, Eq. (1.26)]{MS07})}\\
\frac{ Q(q^{12},q^3) }{ \s(-q^2) }
&=
  \sum_{n=0}^\infty \frac
    { q^{n(n+1)} (q^3;q^6)_{n} (-q^2; q^2)_{n} }{(q^2;q^2)_{2n+1} (q;q^2)_n}
    &\mbox{(\sn{107})}\\
\frac{Q(q^{12},q^5) }{  \s(-q^2) }
 &=
        \sum_{n=0}^\infty \frac
    { q^{n(n+3)} (q^3;q^6)_{n} (-q^2; q^2)_{n} }{(q^2;q^2)_{2n+1} (q;q^2)_n} \notag\\
    &\mbox{(M.-S.~\cite[p. 768, Eq. (1.30)]{MS07})}
 \end{align}

\setcounter{subsection}{26}
\subsection{Dyson's Mod 27 Identities}
\begin{align}
\frac{ f(-q^3,-q^{24})  }{f(-q) }
&=
   \sum_{n=0}^\infty   \frac{ q^{n(n+3)} (q^3;q^3)_{n} }
  { (q)_{2n+2} (q)_{n} }  &\mbox{(Dyson~\cite[p. 434 (B1)]{B47}; \sn{90})}\\
\frac{ f(-q^6,-q^{21}) }{f(-q) }
&=    \sum_{n=0}^\infty   \frac{ q^{n(n+2)} (q^3;q^3)_{n} }
  { (q)_{2n+2} (q)_{n} }  &\mbox{(Dyson~\cite[p. 434 (B2)]{B47}; \sn{91})}\\
\frac{ f(-q^9,-q^{18}) }{ f(-q) }
&=
     \sum_{n=0}^\infty   \frac{ q^{n(n+1)} (q^3;q^3)_{n} }
  { (q)_{2n+1} (q)_{n} }  &\mbox{(Dyson~\cite[p. 434 (B3)]{B47}; \sn{92})}\\
\frac{ f(-q^{12},-q^{15}) }{ f(-q) }
&=
      1+ \sum_{n=1}^\infty   \frac{ q^{n^2} (q^3;q^3)_{n-1} }
  { (q)_{2n-1} (q)_{n} }  &\mbox{(Dyson~\cite[p. 434 (B4)]{B47}; \sn{93})}
\end{align}

\subsection{Mod 28 Identities}
\begin{align}
\frac{ Q(q^{14},-q) }{ \ts(-q) }
&=
\sum_{n=0}^\infty \frac  {q^{n(n+2)} (-q; q^2)_{n}  } {(q^2;q^2)_{2n} }
      &\mbox{(\sn{118})}  \\
\frac{ Q(q^{14},-q^3) }{ \ts(-q) }
&=
\sum_{n=0}^\infty \frac  {q^{n^2} (-q; q^2)_{n}  } {(q^2;q^2)_{2n} }
     &\mbox{(\sn{117})}   \\
\frac{ Q(q^{14},-q^5) }{ \ts(-q) }
&=
       \sum_{n=0}^\infty \frac  {q^{n(n+2)} (-q; q^2)_{n+1}  } {(q^2;q^2)_{2n+1} }
       &\mbox{(\sn{119})}
\end{align}

\setcounter{subsection}{31}
\subsection{Mod 32 Identities}
\begin{align}
\frac{ Q(q^{16},-q^2)}{f(-q)}
&= 1 + \sum_{n=1}^\infty \frac{ q^{n^2} (-q^2;q^2)_{n-1}}{(q)_{2n}}
&\mbox{(\sn{121})}\\
\frac{ Q(q^{16}, -q^4)}{f(-q)}
 &= \sum_{n=0}^\infty \frac{ q^{n(n+1)} (-q;q^2)_n }{(q)_{2n+1}}
&\mbox{(\eqref{qGauss} with $aq=-c=q^{3/2}$) } \\
\frac{ Q(q^{16},-q^6)}{f(-q)}
&=  \sum_{n=0}^\infty \frac{ q^{n(n+2)} (-q^2;q^2)_{n}}{(q)_{2n+2}} &\mbox{(\sn{123})}
\end{align}

\setcounter{subsection}{35}
\subsection{Mod 36 Identities}
\subsubsection{Triple Products}
\begin{align}
\frac{ f(-q^3,-q^{33})}{\ts(-q)}
&=
  \sum_{n=0}^\infty \frac
    {q^{n(n+4)} (q^6;q^6)_{n} (-q; q^2)_{n+1}  }
    {(q^2;q^2)_{2n+2} (q^2;q^2)_{n}}     \notag\\
    &\mbox{(Dyson~\cite[p. 434, Eq. (C1)]{B47}; \sn{116})}  \\
\frac{ f(-q^9,-q^{27})}{\ts(-q)}
&=
  \sum_{n=0}^\infty \frac
    {q^{n^2} (q^6;q^6)_{n} (-q; q^2)_{n+1}  }
    {(q^2;q^2)_{2n+2} (q^2;q^2)_{n}}     \notag\\
      &\mbox{(Dyson~\cite[p. 434, Eq. (C2)]{B47}; \sn{115c})} \\
\frac{ f(-q^{12},-q^{24})}{\ts(-q)}
&=
  \sum_{n=0}^\infty \frac
    {q^{n(n+2)}  (q^3;q^6)_{n} (-q; q^2)_{n+1}  }
    {(q^2;q^2)_{2n+1} (q;q^2)_n} \notag\\ &\mbox{(\sn{110c}}) \\
\frac{ f(-q^{15},-q^{21})}{\ts(-q)}
&=
1+\sum_{n=1}^\infty \frac
    {q^{n^2} (q^6;q^6)_{n-1} (-q; q^2)_{n}  }
    {(q^2;q^2)_{2n-1} (q^2;q^2)_{n}}  \notag\\   &\mbox{(Dyson~\cite[p. 434, Eq. (C3)]{B47}; \sn{114})}
\end{align}
\subsubsection{Quintuple Products}
\begin{align}
\frac{ Q(q^{18}, q)}{f(-q^2)}
&= \sum_{n=0}^\infty \frac{ q^{2n(n+1)} (q^3;q^6)_n }{(q^2;q^2)_{2n}(q;q^2)_{n+1}  }
&\mbox{(M.-S.~\cite{MS08})} \\
\frac{ Q(q^{18}, q^{3})}{f(-q^2)}
&=
  \sum_{n=0}^\infty \frac{ q^{2n^2} (q^3;q^6)_n }{(q^2;q^2)_{2n}(q;q^2)_n  }
  &\mbox{(Ramanujan~\cite[p. 104, Ent. 5.3.4]{AB07})}\\
\frac{ Q(q^{18}, q^{5})}{f(-q^2)}
&=
    \sum_{n=0}^\infty \frac
    {q^{2n(n+1)} (q^3; q^6)_{n}  }{(q^2;q^2)_{2n+1} (q;q^2)_n }
   &\mbox{(\sn{124})}  \\
\frac{ Q(q^{18}, q^{7})}{f(-q^2)}
&=
   \sum_{n=0}^\infty \frac
    {q^{2n(n+2)} (q^3; q^6)_{n}  }{(q^2;q^2)_{2n+1} (q;q^2)_n }
    &\mbox{(\sn{125})}
\end{align}

\section{$q$-series expansions of sums of ratios of theta functions}
Slater's list contains quite a few identities between $q$-series and \emph{sums} of
two or more ratios of theta functions.  
\setcounter{subsection}{7}
\subsection{Mod 8 Identity}
\begin{multline}
\frac{ f(-q,-q^7)+ f(-q^3,-q^5)}{\s(-q^4)} =
  \sum_{n=0}^\infty \frac{ q^{2n(n-1)} (-q^4;q^4)_n (q;q^2)_{2n} } { (q^4;q^4)_{2n} }\\
     \mbox{(M.-S.-Z.~\cite[Eq. (2.4)]{MSZ08})}
\end{multline}

\addtocounter{subsection}{1}
\subsection{Mod 10 Identity}
\begin{multline}  
\frac{ f(-q^5,-q^5) - f(-q,-q)}{2q\s(-q)} =\frac{f(-q^3,-q^7)  f(-q^4,-q^{16})_\infty}{\s(-q) f(-q^8,-q^{12})}
 \\  = \sum_{n=0}^\infty \frac{ q^{n(n+3)/2} (-q)_n }{ (q;q^2)_{n+1} (q)_{n+1} }
   \mbox{  (B.-M.-S.~\cite[Eq. (2.22)]{BMS07})}
  \end{multline}

 \addtocounter{subsection}{1}
\subsection{Mod 12 Identities}
\begin{align}
\frac{f(-q^5,-q^7) + q f(-q,-q^{11})}{f(-q)}
&=\sum_{n=0}^\infty \frac{ q^{n^2} (-1;q^2)_n }{(q)_{2n}}
&\mbox{(\sn{47})}\\
\frac{f(-q^5,-q^7) - q f(-q,-q^{11})}{f(-q)}
&=
 \sum_{n=0}^\infty   \frac{ q^{n(n+1)}(-1;q^2)_{n}  } {(q)_{2n}}
     &\mbox{(\sn{48})}
\end{align}

\setcounter{subsection}{14}
\subsection{Mod 15 Identities}
\begin{align}
\frac{ f(q^7,q^{8}) - q f(q^2,q^{13}) }{\s(-q)}
  &=  \sum_{n=0}^\infty   \frac{ q^{n(3n+1)/2} (-q)_n} {(q)_{2n+1}}
             &\mbox{(\sn{62})} \\
  &=   \sum_{n=0}^\infty   \frac{ q^{n(3n-1)/2} (-q)_n} { (q)_{2n}}
    &\mbox{(Rogers~\cite[p. 332]{R17})}  \label{A5s2}
\end{align}

\begin{equation}
\frac{ f(q^4,q^{11}) - q f(q,q^{14}) }{\s(-q)}
  =\sum_{n=0}^\infty   \frac{ q^{3n(n+1)/2} (-q)_n} {(q)_{2n+1}}
  \mbox{(Rogers~\cite[p. 332]{R17}; \sn{63}) }
\end{equation}

\subsection{Mod 16 Identities}
\begin{align}
\frac{ f(q^6,q^{10}) + q f(q^2, q^{14}) }{\s(-q^2) }
&=
   \sum_{n=0}^\infty   \frac{ q^{n(n+1)} (-q)_{2n} } {(q)_{2n+1} (q^4;q^4)_n}
     &\mbox{(\sn{64})} \\
\frac{ f(-q^6,-q^{10}) + q f(-q^2, -q^{14}) }{\s(-q^2) }
&=
   \sum_{n=0}^\infty   \frac{ q^{n(n+1)} (-q^2;q^4)_n} {(q)_{2n+1} (-q;q^2)_n}
     &\mbox{(\sn{65})}\\
\frac{ f(-q^6,-q^{10}) + q f(-q^2, -q^{14}) }{\ts(-q) }
&=
 \sum_{n=0}^\infty   \frac{q^{n^2} (-1;q^4)_n (-q;q^2)_n } {(q^2;q^2)_{2n}}
    &\mbox{(\sn{66})}  \\
\frac{ f(-q^6,-q^{10}) - q f(-q^2, -q^{14}) }{\ts(-q) }
&=
  \sum_{n=0}^\infty   \frac{q^{n(n+2)} (-1;q^4)_n (-q;q^2)_n } {(q^2;q^2)_{2n}}
 &\mbox{(\sn{67})}
\end{align}

\addtocounter{subsection}{1}
\subsection{Mod 18 Identities}
\begin{equation}
\frac{ f(-q^6, -q^{12}) + f(-q^9,-q^9)}{\s(-q)}
= 2 + \sum_{n=1}^\infty \frac{ q^{n(n-1)/2} (-q)_n (q^3;q^3)_{n-1} }{(q)_n (q)_{2n-1} }
    \quad\mbox{(\sn{73})}
\end{equation}
\begin{multline}
\frac{ f(-q^6, -q^{12}) + f(-q^9,-q^9) -q f(-q^3,-q^{15})}{\s(-q)}
\\ = 1 + \sum_{n=1}^\infty \frac{ q^{n(n-1)/2} (-q)_n (q^3;q^3)_{n-1} }{(q)_{n-1} (q)_{2n} }
   \quad\mbox{(\sn{74})}
\end{multline}
\begin{equation}
 \frac{  f(-q^9,-q^9) -q f(-q^3,-q^{15})}{\s(-q)}
= 1 + \sum_{n=1}^\infty \frac{ q^{n(n+1)/2} (-q)_n (q^3;q^3)_{n-1} }{(q)_{n-1} (q)_{2n} }
   \quad\mbox{(\sn{75})}
\end{equation}

\setcounter{subsection}{20}
\subsection{Mod 21 Identities}
\begin{align}
\frac{ f(q^{10},q^{11}) - q f(q^4,q^{17})}{\s(-q)}
&=
\sum_{n=0}^\infty   \frac{q^{n(n+1)/2} (-q)_n}  { (q)_{2n}  }
      &\mbox{(Rogers~\cite[p. 331 (1)]{R17}; \sn{81})} \\
\frac{ f(q^{8},q^{13}) - q^2 f(q,q^{20})}{\s(-q)}
&=
 \sum_{n=0}^\infty   \frac{q^{n(n+1)/2} (-q)_n}  { (q)_{2n+1}  }
 &\mbox{(Rogers~\cite[p. 331 (1)]{R17}; \sn{80})} \\
\frac{ f(q^{5},q^{16}) - q f(q^2,q^{19})}{\s(-q)}
 &=   \sum_{n=0}^\infty   \frac{q^{n(n+3)/2} (-q)_n}  { (q)_{2n+1}  }
     &\mbox{(Rogers~\cite[p. 331 (1)]{R17}; \sn{82})}
\end{align}

\setcounter{subsection}{23}
\subsection{Mod 24 Identities}
\begin{align}
\frac{ f(q^{11},q^{13}) -q f(q^5, q^{10})}{f(-q)}
&=
\sum_{n=0}^\infty \frac{ q^{2n^2} }{(q)_{2n}} &\mbox{(\sn{83})}  \\
\frac{ f(q^{10},q^{14}) -q^2 f(q^2, q^{22})}{f(-q)}
&=
   \sum_{n=0}^\infty \frac{ q^{n(2n+1)} }{(q)_{2n+1}} &\mbox{(\sn{84})}  \\
&=    \sum_{n=0}^\infty \frac{ q^{n(2n-1)} }{(q)_{2n}} &\mbox{(\sn{85})}   \\
\frac{ f(q^{7},q^{17}) -q^2 f(q, q^{23})}{f(-q)}
&=
     \sum_{n=0}^\infty \frac{ q^{2n(n+1)} }{(q)_{2n+1}} &\mbox{(\sn{86})}  \\
\frac{ f(q^{8},q^{16}) +q f(q^4, q^{20})}{f(-q^2)}
    &=      \sum_{n=0}^\infty \frac{ q^{2n(n+1)} (-q;q^2)_{n}} {(q)_{2n+1} (-q^2;q^2)_n}
    &\mbox{(\sn{87})}
\end{align}

\setcounter{subsection}{26}
\subsection{Mod 27 Identities}
\begin{align}
\frac{f( q^{15},q^{12} )-qf(q^6, q^{21} ) }{f{-q}} &=
  \sum_{n=0}^\infty \frac{ q^{n(n+1)} (-1;q^3)_n  }{ (-1)_n (q)_{2n} } \notag\\
 &\mbox{(M.-S.~\cite[p. 766, Eq. (1.3)]{MS07})}\\
\frac{f(q^{12},q^{15}) - q^2 f(q^3,q^{24})}{f(-q)} &=
   \sum_{n=0}^\infty \frac{ q^{n^2} (-1;q^3)_n }{(-1)_n (q)_{2n}} \notag\\
&\mbox{(M.-S.~\cite[p. 766, Eq. (1.4)]{MS07})}\\
\frac{ f(q^9,q^{18}) - q^3 f(1,q^{27})}{f(-q)} &=
   \sum_{n=0}^\infty \frac{ q^{n(n+1)} (-q^3;q^3)_n }{(-q)_n (q)_{2n+1}} \notag\\
&\mbox{(M.-S.~\cite[p. 766, Eq. (1.5)]{MS07})}\\
\frac{ f(q^{6},q^{21}) - q^4 f(q^{-3},q^{30})}{f(-q)} &=
   \sum_{n=0}^\infty \frac{ q^{n(n+2)} (-q^3;q^3)_n (1-q^{n+1}) }
{(-q)_n (q)_{2n+2}} \notag\\ &\mbox
{(M.-S.~\cite[p. 766, Eq. (1.6)]{MS07})}\\
\frac{ f(-q^6,-q^{21}) -q^2 f(-q^3,-q^{24}) }{f(-q) }
&=  \sum_{n=1}^\infty   \frac{ q^{n^2 -1} (q^3;q^3)_{n-1} (1-q^{n+1}) }
  { (q)_{2n} (q)_{n-1} }  &\mbox{(\sn{88c})}\\
\frac{ f(-q^{12},-q^{15}) -q f(-q^6,-q^{21}) }{f(-q)}
&= 1+ \sum_{n=1}^\infty   \frac{ q^{n(n+1)} (q^3;q^3)_{n-1}  }
  { (q)_{2n} (q)_{n-1} }  &\mbox{(\sn{89c})}
\end{align}

\setcounter{subsection}{29}
\subsection{Mod 30 Identities}
\begin{align}
\frac{ f(q^{17},q^{13}) - q f(q^7, q^{23}) }{f(-q)}
&=
\sum_{n=0}^\infty \frac{ q^{n(n+1)} }{ (q)_{2n} }
       &\mbox{(Rogers~\cite[p. 333]{R94}; \sn{99})}
\\
\frac{ f(q^{14},q^{16}) - q^2 f(q^4, q^{26}) }{f(-q)}
&=
 \sum_{n=0}^\infty \frac{ q^{n^2} }{ (q)_{2n} }
&\mbox{(\sn{98})}
\\
\frac{ f(q^{11},q^{19}) - q^3 f(q, q^{29}) }{f(-q)}
&=
 \sum_{n=0}^\infty \frac{ q^{n(n+1)} }{ (q)_{2n+1} }
  & \mbox{(Rogers~\cite[p. 334]{R94}; \sn{94})}
 \\
\frac{ f(q^{8},q^{22}) - q^4 f(q^{-2}, q^{32}) }{f(-q)}
&=
  \sum_{n=0}^\infty \frac{ q^{n(n+2)} }{ (q)_{2n+1} }
  &\mbox{(\sn{96})}
  \end{align}
  \begin{align}
\frac{ f(q^{17},q^{13}) - q f(q^7, q^{23}) }{\ts(-q)}
&=
 \sum_{n=0}^\infty \frac{ q^{3n^2} (-q;q^2)_n }{ (q^2;q^2)_{2n} }
 &\mbox{(\sn{100c})}
\\
\frac{ f(q^{11},q^{19}) - q^3 f(q, q^{29}) }{\ts(-q)}
&=
   \sum_{n=0}^\infty \frac{ q^{n(3n-2)} (-q;q^2)_n }{ (q^2;q^2)_{2n} }
   &\mbox{(\sn{95})}\\
  &=   \sum_{n=0}^\infty \frac{ q^{n(3n+2)} (-q;q^2)_{n+1} }{ (q^2;q^2)_{2n+1} }
    &\mbox{(\sn{97c})}
\end{align}

\addtocounter{subsection}{1}
\subsection{Mod 32 Identities}
\begin{align}
\frac{ f(q^{8},q^{24})  -q^3 f(1,q^{32}) }{\s(-q) }
&=
\sum_{n=0}^\infty   \frac{  q^{n(n+3)/2} (-q^2;q^2)_{n} (-q)_{n+1}} {(q)_{2n+2}}
        &\mbox{(\sn{103})}   \\
\frac{ f(q^{10},q^{22})  -q f(q^6,q^{26}) }{\s(-q) }
&=
\sum_{n=0}^\infty   \frac{  q^{n(n+3)/2} (-q;q^2)_{n} (-q)_{n}} {(q)_{2n+1}}
        &\mbox{(\sn{106})}   \\
        \frac{ f(q^{12},q^{20}) -q^2 f(q^4,q^{28})  }{\s(-q) }
&= \sum_{n=0}^\infty   \frac{  q^{n(n+1)/2} (-q^2;q^2)_{n} (-q)_{n+1} (1-q^{n+1})} {(q)_{2n+2}}
         \\
\frac{ f(q^{14},q^{18})  -q^3 f(q^2,q^{30}) }{\s(-q) }
&=
\sum_{n=0}^\infty   \frac{  q^{n(n+1)/2} (-q;q^2)_{n} (-q)_{n}} {(q)_{2n+1}}
        &\mbox{(\sn{105})}   \\
\frac{ f(q^{16},q^{16})  -q f(q^8,q^{24}) }{\s(-q) }
&=
1+\sum_{n=1}^\infty   \frac{  q^{n(n+1)/2} (-q^2;q^2)_{n-1} (-q)_{n}} {(q)_{2n}}
        &\mbox{(\sn{104})}
        \end{align}
\begin{align}
\frac{ f(q^{16},q^{16}) + f(q^{12},q^{20})-q f(q^8,q^{24}) - q^2 f(q^4,q^{28}) }{\s(-q) }
&=
2+\sum_{n=1}^\infty   \frac{  q^{n(n-1)/2} (-q^2;q^2)_{n-1} (-q)_n} {(q)_{2n}} \nonumber\\
       &\mbox{(\sn{101c})}\\
      \frac{ f(q^{12},q^{20}) +q f(q^{8},q^{24})-q^2 f(q^4,q^{28}) - q^4 f(1,q^{32}) }{\s(-q) }
&=
\sum_{n=0}^\infty   \frac{  q^{n(n+1)/2} (-q^2;q^2)_{n} (-q)_{n+1}} {(q)_{2n+2}} \nonumber\\
        &\mbox{(\sn{102})}
\end{align}

\addtocounter{subsection}{3}
\subsection{Mod 36 Identities}
\begin{align}
\frac{ f(-q^{15},-q^{21})+q^3 f(-q^3,-q^{33})}{\s(-q^2)}
&=
  \sum_{n=0}^\infty \frac
    { q^{n(n+1)} (q^3;q^6)_{n} (-q^2; q^2)_{n} }{(q^2;q^2)_{2n+1} (q;q^2)_n}
    &\mbox{(\sn{107})}\\
\frac{ f(-q^{9},-q^{27})+q^5 f(-q^{-3},-q^{39})}{\s(-q^2)}
 &=
        \sum_{n=0}^\infty \frac
    { q^{n(n+3)} (q^3;q^6)_{n} (-q^2; q^2)_{n} }{(q^2;q^2)_{2n+1} (q;q^2)_n}\notag\\
    &\mbox{(\cite[p. 768, Eq. (1.30)]{MS07})} \\
\frac{ f(-q^{9},-q^{27})+q^3 f(-q^3,-q^{33})}{\ts(-q)}
&=
 \sum_{n=0}^\infty \frac
    {q^{n(n+2)}  (q^6;q^6)_{n} (-q; q^2)_{n+2}  }
    {(q^2;q^2)_{2n+2} (q^2;q^2)_{n}}  &\mbox{(\sn{112}) }\\
\frac{ f(-q^{12},-q^{24})+q^4 f(-1,-q^{36})}{\ts(-q)}
&=
  \sum_{n=0}^\infty \frac
    {q^{n(n+2)}  (q^3;q^6)_{n} (-q; q^2)_{n+1}  }
    {(q^2;q^2)_{2n+1} (q;q^2)_n}  &\mbox{(\sn{110}})\\
\frac{ f(-q^{15},-q^{21})-q f(-q^9,-q^{27})}{\ts(-q)}
&=
 1+\sum_{n=1}^\infty \frac
    {q^{n(n+2)}  (q^6;q^6)_{n-1} (-q; q^2)_{n}  }
    {(q^2;q^2)_{2n} (q^2;q^2)_{n-1}}  &\mbox{(\sn{111})  }\\
\frac{ f(-q^{15},-q^{21})-q^3 f(-q^3,-q^{33})}{\ts(-q)}
&=
1+\sum_{n=1}^\infty \frac
    {q^{n^2} (q^6;q^6)_{n-1} (-q; q^2)_{n}  }
    {(q^2;q^2)_{2n} (q^2;q^2)_{n-1}}
    &\mbox{(\sn{113})}
 \end{align}

\addtocounter{subsection}{5}
\subsection{Mod 42 Identities}
\begin{align}
\frac{f(q^{17},q^{25}) -q f(q^{11},q^{31}) }{\ts(-q)}
&=
\sum_{n=0}^\infty \frac  {q^{n(n+2)} (-q; q^2)_{n}  } {(q^2;q^2)_{2n} }
      &\mbox{(\sn{118})}   \\
\frac{f(q^{19},q^{23}) -q^3 f(q^{5},q^{37}) }{\ts(-q)}
&=
\sum_{n=0}^\infty \frac  {q^{n^2} (-q; q^2)_{n}  } {(q^2;q^2)_{2n} }
     &\mbox{(\sn{117})}  \\
\frac{f(q^{13},q^{29}) -q^5 f(q^{-1},q^{43}) }{\ts(-q)}
&=
       \sum_{n=0}^\infty \frac  {q^{n(n+2)} (-q; q^2)_{n+1}  } {(q^2;q^2)_{2n+1} }
       &\mbox{(\sn{119})}
  \end{align}

\addtocounter{subsection}{5}
\subsection{Mod 48 Identities}
\begin{align}
\frac{ f(q^{22},q^{26})-q f(q^{14},q^{34}) }{f(-q)}
&= 1 + \sum_{n=1}^\infty \frac{ q^{n(n+1)} (-q^2;q^2)_{n-1} }{(q)_{2n}}
&\mbox{(\sn{120})}
\\
\frac{ f(q^{22},q^{26})-q^2 f(q^{10},q^{38}) }{f(-q)}
&= 1 + \sum_{n=1}^\infty \frac{ q^{n^2} (-q^2;q^2)_{n-1} }{(q)_{2n}} &\mbox{(\sn{121})}
\\
\frac{ f(q^{10},q^{38})-q^3 f(q^{2},q^{46})}{f(-q)}
&=  \sum_{n=0}^\infty \frac{ q^{n(n+3)} (-q^2;q^2)_{n}}{(q)_{2n+2}}
&\mbox{(\sn{122})}
\\
\frac{ f(q^{20},q^{28})-q^4 f(q^4, q^{44})}{f(-q)}
&= \sum_{n=0}^\infty \frac{ q^{n(n+1)} (-q;q^2)_n}{(q)_{2n+1}} \notag\\
 &\mbox{(\eqref{qGauss} with $aq=-c=q^{3/2}$)} \\
\frac{ f(q^{34},q^{14})-q^6 f(q^{-2},q^{50})}{f(-q)}
&=  \sum_{n=0}^\infty \frac{ q^{n(n+2)} (-q^2;q^2)_{n}}{(q)_{2n+2}} &\mbox{(\sn{123})}
\end{align}

\addtocounter{subsection}{5}
\subsection{Mod 54 Identities}
\begin{align}
\frac{ f(-q^{27},-q^{27}) + q^3 f(-q^9,-q^{54})}{ f(-q^2)}
&=
  \sum_{n=0}^\infty \frac{ q^{2n^2} (q^3;q^6)_n }{(q^2;q^2)_{2n}(q;q^2)_n  }
  \\
\frac{ f(-q^{33},-q^{21}) + q^5 f(-q^3,-q^{51})}{ f(-q^2)}
&=
    \sum_{n=0}^\infty \frac
    {q^{2n(n+1)} (q^3; q^6)_{n}  }{(q^2;q^2)_{2n+1} (q;q^2)_n }
   &\mbox{(\sn{124})}  \\
\frac{ f(-q^{39},-q^{15}) + q^7 f(-q^{-3},-q^{57})}{ f(-q^2)}
&=
   \sum_{n=0}^\infty \frac
    {q^{2n(n+2)} (q^3; q^6)_{n}  }{(q^2;q^2)_{2n+1} (q;q^2)_n }
    &\mbox{(\sn{125c})}
\end{align}

\addtocounter{subsection}{9}
\subsection{Mod 64 Identities}
\begin{align}
\frac{ f(q^{28},q^{36}) - q^3 f(q^{12},q^{52})}{ \ts(-q)}
&=
   1+\sum_{n=1}^\infty \frac
    {q^{n^2} (-q; q^2)_{n}  (-q^4;q^4)_{n-1} }{(q^2;q^2)_{2n} }
    &\mbox{(\sn{126})} \\
\frac{ f(q^{28},q^{36}) - q f(q^{20},q^{44})}{ \ts(-q)}
&=
   1+\sum_{n=1}^\infty \frac
    {q^{n(n+2)} (-q; q^2)_{n}  (-q^4;q^4)_{n-1} }{(q^2;q^2)_{2n} }
    &\mbox{(\sn{127})  }\\
\frac{ f(q^{20},q^{44}) - q^5 f(q^{4},q^{60})}{ \ts(-q)}
&=
   \sum_{n=0}^\infty \frac
    {q^{n(n+2)} (-q; q^2)_{n+1}  (-q^4;q^4)_{n} }{(q^2;q^2)_{2n+2} }
    &\mbox{(\sn{128c})}  \\
\frac{ f(q^{12},q^{52}) - q^3 f(q^{4},q^{60})}{ \ts(-q)}
&=
   \sum_{n=0}^\infty \frac
    {q^{n(n+4)} (-q; q^2)_{n+1}  (-q^4;q^4)_{n} }{(q^2;q^2)_{2n+2} }
    &\mbox{(\sn{129})}
 \end{align}

\begin{align}
\frac{ f(q^{32},q^{32}) +qf(q^{24},q^{40})- q^5 f(q^{8},q^{56}) -q^8 f(1,q^{64})}{ \ts(-q)}
\notag\\=
   \sum_{n=0}^\infty \frac
    {q^{n^2} (-q; q^2)_{n+1}  (-q^2;q^4)_{n} }{(q^2;q^2)_{2n+1} }
    \qquad \qquad\mbox{(\sn{130})}
\end{align}

\setcounter{subsection}{107}
\subsection{Mod 108 Identities}
\begin{align}
\frac{ f(-q^3,-q^{24}) -2q^5 f(q^{15},q^{93}) + 2 q^{13} f(q^{-3},q^{111})}{f(-q)}
&= \sum_{n=0}^\infty \frac{ q^{n(n+3)}  (-q^3;q^3)_n}{(-q)_n (q)_{2n+2} }
\notag\\ &\mbox{(B.-M.-S.~\cite[Eq. (3.18)]{BMS07})} \\
\frac{ f(-q^6,-q^{21}) -2q^4 f(q^{21},q^{87}) + 2 q^{11} f(q^{3},q^{105})}{f(-q)}
&= \sum_{n=0}^\infty \frac{ q^{n(n+2)}  (-q^3;q^3)_n}{(-q)_n (q)_{2n+2} }
\notag\\ &\mbox{(B.-M.-S.~\cite[Eq. (3.17)]{BMS07})} \\
\frac{ f(-q^9,-q^{18}) -2q^3 f(q^{27},q^{81}) + 2 q^{9} f(q^{9},q^{99})}{f(-q)}
&= \sum_{n=0}^\infty \frac{ q^{n(n+1)}  (-q^3;q^3)_n}{(-q)_n (q)_{2n+1} }
\notag\\ &\mbox{(B.-M.-S.~\cite[Eq. (3.16)]{BMS07})} \\
\frac{ f(-q^{12},-q^{15}) -2q^2 f(q^{33},q^{75}) + 2 q^{7} f(q^{15},q^{93})}{f(-q)}
&= 1+\sum_{n=1}^\infty \frac{ q^{n^2}  (-q^3;q^3)_{n-1}}{(-q)_n (q)_{2n-1} }
\notag\\ &\mbox{(B.-M.-S.~\cite[Eq. (3.15)]{BMS07})}
\end{align}

\setcounter{subsection}{143}
\subsection{Mod 144 Identities}
\begin{align}
\frac{ f(-q^{3},-q^{33}) -2q^7 f(q^{18},q^{126}) + 2q^{12} f(q^{6}, q^{138 }  )  }{ \ts(-q) }
&= \sum_{n=0}^\infty \frac{ q^{n(n+4)} (-q;q^2)_{n+1} (-q^6;q^6)_{n}}{(-q^2;q^2)_n (q^2;q^2)_{2n+2} }
\notag\\ &\mbox{(B.-M.-S.~\cite[Eq. (3.39)]{BMS07})} \\
\frac{ f(-q^{15},-q^{21}) -2q^3 f(q^{42},q^{102}) + 2q^{10} f(q^{18}, q^{126 }  )  }{ \ts(-q) }
&= 1+\sum_{n=1}^\infty \frac{ q^{n^2} (-q;q^2)_n (-q^6;q^6)_{n-1}}{(-q^2;q^2)_n (q^2;q^2)_{2n-1} }
\notag\\ &\mbox{(B.-M.-S.~\cite[Eq. (3.38)]{BMS07})}
\end{align}

\section{Equivalent Products in Slater's List}
There is no canonical way to represent ratios of theta functions,
and it often happens that two seemingly different products are in
fact equivalent. 
The table below summarizes such occurrences in Slater's
list.  If a particularly nice form of the infinite product was observed, 
then that form of the product is recorded in the table.

{\small
\begin{longtable}{|c| l |}
\hline\hline
$\prod_{n\geq 1} (1-q^n)$ & (S. 1)\\ \hline
$\prod_{n\geq 1} (1+q^n)$ & (S. 2) = (S. 5-) = (S. 9) = \\
 & (S. 52) = (S. 84) = (S. 85) \\ \hline
$\prod_{n\geq 1} (1-q^{2n-1})$ & (S. 3) = (S. 23)\\ \hline
$\prod_{n\geq 1} (1-q^{2n-1})(1-q^{4n-2})$ & (S. 4) \\ \hline
$\prod_{n\geq 1} (1+q^n)$ & (S. 5-) = (S. 9) = (S. 52) \\
& = (S. 84) = (S. 85)\\ \hline
$\prod_{n\geq 1} (1+q^{3n-1})(1+q^{3n-2})(1-q^{3n})/(1-q^n)$  & (S. 6)\\ \hline
$\prod_{n\geq 1} (1+q^{2n})$ & (S. 7)\\ \hline
$\prod_{n\geq 1} 1/(1-q^n); n\not\equiv 0\pmod 4$ & (S. 8) = (S. 11) = \\
& =(S. 51) = (S. 64)\\ \hline
$\prod_{n\geq 1} (1+q^n)$& (S. 9) = (S. 5-) = (S. 52) \\
& = (S. 84) = (S. 85)\\ \hline
$\prod_{n\geq 1} (1-q^n)(1+q^{2n-1})$ & (S. 10) = (S. 47)\\ \hline
$\prod_{n\geq 1} 1/(1-q^n); n\not\equiv 0 \pmod {4}$  & (S. 11) = (S. 8) \\
& = (S. 51) = (S. 64)\\ \hline
$\prod_{n\geq 1} (1+q^{2n-1})/(1-q^{2n-1})$ & (S. 12)\\ \hline
& (S. 13) = (S. 8) + (S. 12) \\ \hline
$H(q) = \prod_{n\geq 1} 1/(1-q^n); n\equiv \pm 2\pmod 5$ & (S. 14)\\ \hline
$H(q)/(-q)_\infty$ & (S. 15)\\ \hline
$H(q)/(-q^2;q^2)_\infty$ & (S. 16)\\ \hline
& (S. 17)  = (S. 94-) \\ \hline
$G(q) = \prod_{n\geq 1} 1/(1-q^n); n\equiv \pm 1\pmod 5$ & (S. 18)\\ \hline
$G(q)/(-q)_\infty$ & (S. 19) \\ \hline
$G(q)/(-q^2;q^2)_\infty$ & (S. 20)\\ \hline
$\prod_{n\geq 1} (1-q^{2n-1})$ & (S. 23) = (S. 3)\\ \hline
  & (S. 24) = (S. 30-) \\ \hline
$\prod_{n\geq 1} 1/(1-q^n); n\equiv \pm 1,\pm 4\pm 5 \pmod {12}$ 
&(S. 27)= (S. 87)\\  \hline
$\prod_{n\geq 1} 1/(1-q^n); n\not\equiv 0,\pm 3 \pmod {12}$ &(S. 28)\\  \hline
&(S. 30) = (S. 24-)\\ \hline
$\prod_{n\geq 1} 1/(1-q^n); n\equiv \pm 3, 4\pmod{8} $& (S. 34)\\ \hline
&(S. 35) = (S. 106)\\ \hline
$\prod_{n\geq 1} 1/(1-q^n); n\equiv \pm 1, 4\pmod{8} $& (S. 36)\\ \hline
&(S. 37) =(S. 105)\\ \hline
$\prod_{n\geq 1} 1/(1-q^n); n\equiv\pm 1,\pm 4, \pm 6,\pm 7  \pmod{16}$&(S. 38) = (S. 86)\\ \hline
$\prod_{n\geq 1} 1/(1-q^n); n\equiv\pm 2,\pm 3,\pm 4, \pm 5  \pmod{16}$&(S. 39) = (S. 83)\\ \hline
$\prod_{n\geq 1} 1/(1-q^n); n\not\equiv 0,\pm 2 \pmod {10}$&(S. 44) = (S. 63)\\ \hline
$\prod_{n\geq 1} 1/(1-q^n); n\not\equiv 0,\pm 4\pmod {10}$ &(S. 46) = (S. 62)\\ \hline
$\prod_{n\geq 1} (1-q^n)(1+q^{2n-1})$&(S. 47) = (S.10) \\
& = (S. 54)$+q\times$(S. 49)\\ \hline
&(S. 48) = (S.54)$-q\times$(S. 49)\\ \hline
$\prod_{n\geq 1} 1/(1-q^n); n\not\equiv 0, \pm 1\pmod {12}$  & (S. 49)\\ \hline
$\prod_{n\geq 1} 1/(1-q^n); n\not\equiv 0,\pm 2\pmod {12}$ & (S. 50)\\ \hline
$\prod_{n\geq 1} 1/(1-q^n); n\not\equiv 0 \pmod {4}$ &(S. 51) = (S. 8) 
\\ & = (S. 11) = (S. 64)\\ \hline
$\prod_{n\geq 1} (1+q^n)$&(S. 52) = (S. 5-) \\
& = (S. 9) = (S. 84) \\ & = (S. 85)\\ \hline
$\prod_{n\geq 1} 1/(1-q^n); n\not\equiv 0,\pm 5\pmod {12}$ & (S. 54)\\ \hline
&(S. 55) = (S. 57-) \\ \hline
&(S. 57) = (S. 55-)\\ \hline
$\prod_{n\geq 1} 1/(1-q^n); n\not\equiv 0,\pm 2\pmod {14}$ & (S. 59)\\ \hline
$\prod_{n\geq 1} 1/(1-q^n); n\not\equiv 0,\pm 4\pmod {14}$ & (S. 60)\\ \hline
$\prod_{n\geq 1} 1/(1-q^n); n\not\equiv 0,\pm 6\pmod {14}$ & (S. 61)\\ \hline
$\prod_{n\geq 1} 1/(1-q^n); n\not\equiv 0,\pm 4\pmod {10}$ &(S. 62) = (S. 46)\\ \hline
$\prod_{n\geq 1} 1/(1-q^n); n\not\equiv 0,\pm 2\pmod {10}$ &(S. 63) = (S. 44)\\ \hline
$\prod_{n\geq 1} 1/(1-q^n); n\not\equiv 0 \pmod {4}$ &(S. 64) = (S. 8) \\
& = (S. 11) = (S. 51)\\ \hline
&(S. 65) \\ & = (S. 37)$+\sqrt{q}\times$(S. 35) 
\\ \hline
&(S. 66)\\ &= (S. 71)$+q\times$(S. 68)\\ \hline
&(S. 67)\\ &= (S. 71)$-q\times$(S. 68)\\ \hline
$\prod_{n\geq 1} 1/(1-q^n); n\not\equiv 0,\pm 4, \pm 6 \pm 10, 16\pmod{32}$&(S. 69)= (S. 123)\\
\hline
$\prod_{n\geq 1} 1/(1-q^n); n\mbox{ odd or }n \equiv 8 \pmod {16} $ & (S. 70)\\ \hline
$\prod_{n\geq 1} 1/(1-q^n); n\not\equiv 0,\pm 2, \pm 12, \pm 14, 16\pmod{32}$ &(S. 72)=(S. 121)\\
\hline
&(S. 73)= (S. 77)$+$(S. 78)\\ \hline
&(S. 74)= (S. 77)$+$(S. 78)\\ & $-q\times$(S. 76)\\ \hline
&(S. 75)\\ &=(S. 78)$-q\times$(S. 76)\\ \hline
&(S. 78)\\ &= (S. 75)$+q\times$(S. 76)\\ \hline
$\prod_{n\geq 1} 1/(1-q^n); n\mbox{ odd or }n\equiv \pm 4\pmod {20}$&(S. 79)= (S. 98)\\ \hline
$\prod_{n\geq 1} 1/(1-q^n); n\equiv\pm 2,\pm 3,\pm 4, \pm 5  \pmod{16}$&(S. 83) = (S. 39)\\ \hline
$\prod_{n\geq 1} (1+q^n)$ &(S. 84) = (S. 5-) \\ &= (S. 9) = (S. 52) = (S. 85)\\ \hline
$\prod_{n\geq 1} (1+q^n)$&(S. 85) = (S. 5-) \\ &= (S. 9) = (S. 52) = (S. 84) \\ \hline
$\prod_{n\geq 1} 1/(1-q^n); n\equiv\pm 1,\pm 4, \pm 6,\pm 7  \pmod{16}$&(S. 86) = (S. 38)\\ \hline
$\prod_{n\geq 1} 1/(1-q^n); n\equiv \pm 1,\pm 4\pm 5 \pmod {12}$ &(S. 87) = (S. 27)\\ \hline
&(S. 88)= (S. 91)$-q^2\times$(S. 90)\\ \hline
&(S. 89)= (S. 93)$-q\times$(S. 91)\\ \hline
$\prod_{n\geq 1} 1/(1-q^n); n\not\equiv 0,\pm 3 \pmod {27}$ & (S. 90)\\ \hline
$\prod_{n\geq 1} 1/(1-q^n); n\not\equiv 0,\pm 6 \pmod {27}$ & (S. 91)\\ \hline
$\prod_{n\geq 1} 1/(1-q^n); n\not\equiv 0 \pmod{9}$ & (S. 92)\\ \hline
$\prod_{n\geq 1} 1/(1-q^n); n\not\equiv 0,\pm {12} \pmod {27}$ & (S. 93)\\ \hline
$\prod_{n\geq 1} 1/(1-q^n); n\not\equiv 0, 10, \pm 3, \pm 4, \pm 7 \pmod {20}$ & (S. 94) = (S. 17-)\\
\hline
&(S. 95)= (S. 97)\\ \hline
$\prod_{n\geq 1} 1/(1-q^n); n\mbox{ odd or }n\equiv \pm 8\pmod {20}$&(S. 96)\\ \hline
&(S. 97) = (S. 95)\\ \hline
$\prod_{n\geq 1} 1/(1-q^n); n\mbox{ odd or }n\equiv \pm 4\pmod {20}$&(S. 98) = (S. 79)\\
\hline
$\prod_{n\geq 1} 1/(1-q^n); n\equiv \pm 2, \pm 3, \pm 4, \pm 5, \pm 6, \pm 7
\pmod{20}$ &(S. 99) \\ \hline
&(S. 105) = (S. 37)\\ \hline
&(S. 106) = (S. 35)\\ \hline
&(S. 108) = (S. 115)\\ & $-q^2\times$(S. 116)\\ \hline
$\prod_{n\geq 1} 1/(1-q^n); n\not\equiv 0, 2, 6, 10 \pmod {12}$ & (S. 110c) \\ \hline
&(S. 112) \\ &= (S. 115)$+q^3\times$(S. 116)\\ \hline
$\prod_{n\geq 1} 1/(1-q^n); n\not\equiv 0, \pm 2, \pm 6, \pm {10}, \pm{14}, \pm {15}, 18 \pmod {36}$  &(S. 114)\\ \hline
$\prod_{n\geq 1} 1/(1-q^n); n\not\equiv 0, \pm 2, \pm 6, \pm 9, \pm {10}, \pm{14}, 18 \pmod {36}$  &(S. 115)\\ \hline
$\prod_{n\geq 1} 1/(1-q^n); n\not\equiv 0, \pm 2, \pm 3,  \pm 6, \pm {10}, \pm{14}, 18 \pmod {36}$  &(S. 116)\\ \hline
$\prod_{n\geq 1} 1/(1-q^n); n\not\equiv 0,\pm 2, \pm 12, \pm 14, 16\pmod{32}$ &(S. 121) = (S. 72)\\
\hline
$\prod_{n\geq 1} 1/(1-q^n); n\not\equiv 0,\pm 4, \pm 6 \pm 10, 16\pmod{32}$&(S. 123) = (S. 69)\\
\hline
$\prod_{n\geq 1} 1/(1-q^n); n\equiv \pm 2, \pm 4, \pm 5, \pm 6 \pmod{18}$ 
&(S. 124)\\ \hline
$\prod_{n\geq 1} 1/(1-q^n); n\equiv \pm 2, \pm 6, \pm 7, \pm 8 \pmod{18}$ 
&(S. 125)\\ \hline
&(S. 127) = (S. 71)\\ & $-q\times$(S. 128)\\ \hline
&(S. 129) = $q^{-2}$\\ & $\times\Big($(S. 128)$-$(S. 68)$\Big)$\\ \hline
\hline
\caption{Products and cross-references in Slater's list}
\end{longtable}
}

\section{False Theta Series Identities}
Noting that Ramanujan's theta series
\[ f(a,b) := \sum_{n=-\infty}^\infty a^{n(n+1)/2} b^{n(n-1)/2} = \
  \sum_{n=0}^\infty a^{n(n+1)/2} b^{n(n-1)/2} +  \sum_{n=1}^\infty a^{n(n-1)/2} b^{n(n+1)/2},  \]
let us define the corresponding \emph{false theta series} as
 \[ \ft(a,b) :=
  \sum_{n=0}^\infty a^{n(n+1)/2} b^{n(n-1)/2} - \sum_{n=1}^\infty a^{n(n-1)/2} b^{n(n+1)/2}.  \]
  Notice that although $f(a,b)=f(b,a)$, in general, $\ft(a,b)\neq \ft(b,a)$.

L.J. Rogers~\cite{R17} studied $q$-series expansions for many instances of
$f(\pm q^\alpha, \pm q^\beta)$ and $\ft(\pm q^\alpha, \pm q^\beta)$, which he
called \emph{theta (resp. false theta) series of order $(\alpha+\beta)/2$}.

A false theta series identity for the series $\ft(\pm q^\alpha, \pm q^\beta)$
arises from the same Bailey pair as the
Rogers-Ramanujan type identity with product
$f(\pm q^\alpha, \pm q^\beta)/\s(-q)$.  A designation of the
form (\ftn{$n$}) means that the identity is the false theta analog of (\sn{$n$}), the
$n$th identity in Slater's list~\cite{S52}.  (Slater did not record any false theta
function identities.)

 \addtocounter{subsection}{-1}
\subsection{False Theta Series Identities of Order $\frac{3}{2}$}
\begin{align}
\ft(q^2,q) &=
 \sum_{n=0}^\infty \frac{(-1)^n q^{n(n+1)/2}}{(-q )_{n}}
    &\mbox{(Rogers~\cite[p. 333, (5)]{R17}; \ftn{2})} \label{E3f} \\
  &=\sum_{n=0}^\infty \frac{ q^{n(2n+1)}}{(-q)_{2n+1}}
      &\mbox{(Ramanujan~\cite[p. 233, Entry 9.4.3]{AB05}; \ftn{5})} \label{G5f} \\
&= 2-\sum_{n=0}^\infty \frac{ q^{n(2n-1)}}{(-q)_{2n}}
       &\mbox{(Ramanujan~\cite[p. 233, Entry 9.4.4]{AB05})}\\
&= \sum_{n=0}^\infty \frac{ (-1)^n q^n}{(-q^2;q^2)_{n}}
     & \mbox{(Ramanujan~\cite[p. 235, Entry 9.4.7]{AB05})}
\end{align}

\subsection{False Theta Series Identity of Order $2$}
\begin{align}
\ft(-q^3,-q) &=
  \sum_{n=0}^\infty \frac{  (-1)^n q^{n(n+1)} (-q;q^2)_{n} }{ (q;q^2)_{n+1} (-q^2;q^2)_n  }
 &\mbox{(M.-S.-Z.~\cite[Eq. (2.12)]{MSZ08}; \ftn{11}) \label{H19f} }
\end{align}

\subsection{False Theta Series Identities of Order $\frac{5}{2}$}
\begin{align}
\ft( q^4, q) &=
 \sum_{n=0}^\infty \frac{(-1)^n q^{n(n+1)}}{(-q )_{2n+1}}
   & \mbox{(Rogers~\cite[p. 334 (7)]{R17}; \ftn{17})} \label{G2f}  \\
\ft( q^3, q^2) &=
 \sum_{n=0}^\infty \frac{(-1)^n q^{n(n+1)}}{(-q )_{2n}}
    & \mbox{(Rogers~\cite[p. 334 (7)]{R17})}
\end{align}

\subsection{False Theta Series Identities of Order $3$}
\begin{align}
1&=
\sum_{n=0}^\infty \frac{(-1)^n q^{n^2}}{(q ; q^2)_{n+1}}
   &\mbox{(Rogers~\cite[p. 333 (4)]{R17}; \ftn{26}) } \label{C6f} \\
 \ft(-q^5,-q) &=
\sum_{n=0}^\infty \frac{(-1)^n q^{n(n+1)}}{(q ; q^2)_{n+1}}
    &\mbox{(Rogers~\cite[p. 333 (4)]{R17}; \ftn{22}; \ftn{28}) \label{F2f} }
    \end{align}
    \begin{align}
 \ft(q^5,q) &=
\sum_{n=0}^\infty \frac{ (-1)^n q^{ 3n(n+1)/2} (q)_{3n+1} }{ (q^3;q^3)_{2n+1} }
   &\mbox{ (Dyson~\cite[p. 9, Eq. (7.8)]{B49})} \\
\ft(q^4,q^2) &=
\sum_{n=0}^\infty \frac{ (-1)^n q^{ 3n(n+1)/2} (q)_{3n} (1-q^{3n+2}) }{ (q^3;q^3)_{2n+1} }
   & \mbox{  (Dyson~\cite[p. 9, Eq. (7.9)]{B49})}
\end{align}

\subsection{False Theta Series Identities of Order $4$}
\begin{align}
\ft(-q^2,-q^6) &=
\sum_{n=0}^\infty \frac{(-1)^n q^{n(n+1)/2} (-q)_n}{(q ; q^2)_{n+1}} \notag\\
    &\mbox{(Rogers~\cite[p. 333 (5)]{R17})} \label{D5f} \\
\ft(-q^6, -q^2) &=
\sum_{n=0}^\infty \frac{(-1)^n q^{n(n+1)/2} (-q^2;q^2)_n }{(q^{n+1} ; q)_{n+1}} \notag\\
    &\mbox{(Rogers~\cite[p. 333 (5)]{R17})} \label{D6f} \\
 \ft(-q^5,-q^3) &=
\sum_{n=0}^\infty \frac{(-1)^n q^{n(n+1)/2}(q)_n (-q;q^2)_n }{(q ; q)_{2n+1}} \notag\\
    &\mbox{(Ramanujan~\cite[p. 257, Eq. (11.5.3)]{AB05}; \ftn{37})}  \label{I17f} \\
  \ft(-q^7,-q) &=
 \sum_{n=0}^\infty \frac{(-1)^n q^{n(n+3)/2} (q)_n (-q;q^2)_n }{(q ; q)_{2n+1}} \notag\\
    & \mbox{(Ramanujan~\cite[p. 257, Eq. (11.5.4)]{AB05}; \ftn{35})}  \label{I18f}  \\
  \ft(q^7,q) &=
  \sum_{n=0}^\infty \frac{(-1)^n q^{2n(n+1)} (q^4;q^4)_n (q;q^2)_{2n+1} }{ (q^4;q^4)_{2n+1} }
  \notag\\
    &\mbox{ (Ramanujan~\cite[p. 257, Eq. (11.5.5)]{AB05})}
\end{align}

\subsection{False Theta Series Identities of Order $5$}
\begin{align}
\ft( -q^7, -q^3 ) &=
\sum_{n=0}^\infty \frac{(-1)^n q^{n(n+1)/2}}{(q ; q^2)_{n+1}}
    &\mbox{(Rogers~\cite[p. 333 (3)]{R17}; \ftn{45})} \label{C3f} \\
\ft(-q^9, -q) &=
\sum_{n=0}^\infty \frac{(-1)^n q^{n(n+3)/2}}{(q ; q^2)_{n+1}}
    & \mbox{(Rogers~\cite[p. 333 (3)]{R17}; \ftn{43})} \label{C4f}
\end{align}

\addtocounter{subsection}{2}
\subsection{False Theta Series Identities of Order $\frac{15}{2}$}
\begin{align}
\ft( q^8, q^7 ) -q \ft (q^2, q^{13}) &=
 \sum_{n=0}^\infty \frac{(-1)^n q^{n(3n-1)/2} (q)_n}{(q )_{2n}}
    & \mbox{(Rogers~\cite[p. 333 (2)]{R17})} \label{A5f}\\
\ft(q^7,q^8) +q \ft(q^2, q^{13}) &=
 \sum_{n=0}^\infty \frac{(-1)^n q^{n(3n+1)/2} (q)_n}{(q )_{2n+1}}
    & \mbox{(Rogers~\cite[p. 333 (2)]{R17}; \ftn{62})} \label{A6f} \\
\ft(q^4,q^{11}) + q \ft(q,q^{14}) &=
 \sum_{n=0}^\infty \frac{(-1)^n q^{3n(n+1)/2} (q)_n}{(q)_{2n+1}}
    &\mbox{(Rogers~\cite[p. 333 (2)]{R17})} \label{A7f}
\end{align}

\subsection{False Theta Series Identities of Order $9$}
\begin{align}
\ft(q^{15},-q^3) &= \sum_{n=0}^\infty \frac{ (-1)^n q^{n(n+3)/2} (-q^3;q^3)_n}
{ (q;q^2)_{n+1} (-q)_n (-q)_{n+1}}   
&\mbox{(M.-S. \cite[p. 768, Eq. (1.32)]{MS07})}
\end{align}
\begin{align}
\ft(q^{12},-q^6) + q^2 \ft(q^{18},-1) &=\sum_{n=0}^\infty \frac{ (-1)^n q^{n(n+1)/2} (-q^3;q^3)_n}
{ (q;q^2)_{n+1} (-q)_n^2}\notag\\
&\mbox{(M.-S. \cite[p. 768, Eq. (1.34)]{MS07})}
\end{align}
\begin{align}
\ft( q^{15}, q^{3} ) &=
\sum_{n=0}^\infty \frac{(-1)^n q^{n(n+3)/2} (q^3;q^3)_n}{(1+q^{n+1}) (q)_{2n+1}} \notag\\
    &\mbox{ (Dyson~\cite[J6; p. 434, Eq. (E1)]{B47}; \ftn{76})}  \label{J6f} \\
  &= \sum_{n=0}^\infty \frac{ (-1)^n q^{n(n+3)/2} (q^3;q^3)_n (1-q^{n+1})}{(q)_{2n+2} }\notag\\
  &\mbox{ (M.-S.-Z.~\cite[Eq. (2.13)]{MSZ08}; \ftn{75})} \label{J3f}  \\
  \ft( q^{12}, q^{6}) &=
\sum_{n=0}^\infty \frac{(-1)^n q^{n(n+1)/2} (q^3;q^3)_n } {(q)_{2n+1}}\notag\\
    & \mbox{ (Dyson~\cite[p. 434, Eq. (E2)]{B47}; \ftn{77})} 
\end{align}

\setcounter{subsection}{10}
\subsection{False Theta Series Identities of Order $\frac{21}{2}$}
\begin{align}
  \ft(q^8, q^{13}) + q^2 \ft(q,q^{20}) &=
 \sum_{n=0}^\infty \frac{(-1)^n q^{n(n+1)/2} (q)_n}{(q )_{2n+1}}
    & \mbox{(Rogers~\cite[p. 332 (1)]{R17}; \ftn{80})} \label{A2f} \\
 \ft( q^{10}, q^{11}) - q \ft( q^4, q^{17}) &=
 \sum_{n=0}^\infty \frac{(-1)^n q^{n(n+1)/2} (q)_n }{(q )_{2n}}
    & \mbox{(Rogers~\cite[p. 332 (1)]{R17}; \ftn{81})} \label{A3f} \\
 \ft( q^5, q^{16} ) + q\ft( q^2, q^{19}) &=
 \sum_{n=0}^\infty \frac{(-1)^n q^{n(n+3)/2} (q)_n }{(q)_{2n+1}}
    &\mbox{(Rogers~\cite[p. 332 (1)]{R17}; \ftn{82})} \label{A4f}
\end{align}

\setcounter{subsection}{15}
\subsection{False Theta Series Identities of Order $16$}
\begin{align}
\ft(-q^8,-q^{24}) &=
  \sum_{n=0}^\infty \frac{ (-1)^{n} q^{n(n+3)/2} (q)_{n+1} (-q^2;q^2)_n }{ (q)_{2n+2}  }
  \notag\\
  &\mbox{(M.-S.-Z. \cite[Eq. (2.14)]{MSZ08}; \ftn{103})} \label{K6f} \\
\ft(q^{22}, q^{10}) + q \ft(q^{26},q^6) &=
  \sum_{n=0}^\infty \frac{ (-1)^{n} q^{n(n+3)/2} (q)_{n} (-q;q^2)_n }{ (q)_{2n+1}  }
  \notag\\
  &\mbox{(M.-S.-Z. \cite[Eq. (2.15)]{MSZ08}; \ftn{106})}  \label{K4f}
\end{align}

\setcounter{subsection}{17}
\subsection{False Theta Series Identities of Order $18$}
\begin{align}
\ft(q^{21},-q^{15}) -q\ft(q^{27},-q^9) &=
\sum_{n=0}^\infty \frac{ (-1)^n q^{n(n+1)} (-q^3;q^6)_n}{(q^2;q^4)_n (-q;q)_{2n+1}}\notag\\
&\mbox{(M.-S. \cite[p. 768, Eq. (1.31)]{MS07})}\\
\ft(q^{21},-q^{15}) +q^3\ft(q^{33},-q^3) &=
\sum_{n=0}^\infty \frac{ (-1)^n q^{n(n+1)} (-q^3;q^6)_n}{(q^2;q^4)_{n+1} (-q;q)_{2n}}\notag\\
&\mbox{(M.-S. \cite[p. 768, Eq. (1.33)]{MS07})}\\
\ft(q^{27},-q^{9}) +q^2\ft(q^{33},-q^3) &=
\sum_{n=0}^\infty \frac{ (-1)^n q^{n(n+3)} (-q^3;q^6)_n}{(q^2;q^4)_{n+1} (-q;q)_{2n}}\notag\\
&\mbox{(M.-S. \cite[p. 768, Eq. (1.35)]{MS07})}\\
\ft(q^{21},q^{15}) +q\ft(q^{27},q^9) &=
\sum_{n=0}^\infty \frac{ (-1)^n q^{n(n+1)} (q^3;q^6)_n}{(q^2;q^4)_n (-q^2;q^2)_{n} (q;q^2)_{n+1}}
\notag\\
&\mbox{(M.-S. \cite[p. 768, Eq. (1.36)]{MS07})}\\
 \ft(q^{21},q^{15}) - q^3 \ft(q^{33},q^{3}) &=
 \sum_{n=0}^\infty \frac{ (-1)^n q^{n(n+1)} (q^3;q^6)_n  }{ (q^{2n+2};q^2)_{n+1} (q;q^2)_n }
 \notag\\
  &\mbox{(M.-S. \cite[p. 769, Eq. (1.38)]{MS07}; \ftn{107})} \label{J4f} \\
  \ft(q^{27},q^{9}) - q^5 \ft(q^{39},q^{-3}) &=
 \sum_{n=0}^\infty \frac{ (-1)^n q^{n(n+3)} (q^3;q^6)_n  }{ (q^{2n+2};q^2)_{n+1} (q;q^2)_n }
 \notag\\
 &\mbox{(M.-S. \cite[p. 769, Eq. (1.40)]{MS07})}
\end{align}

\section{Inter-Dependence Between Identities}

A number of pairs of identities on Slater's list are easily seen
to be equivalent, either by simply replacing $q$ by $-q$ (for
example, \textbf{S24} and \textbf{S30}), replacing $q$ by $q^2$
(for example, \textbf{S2} and \textbf{S7}), specializing a free
parameter in a general series = product identity (for example,
\textbf{S8}) or rearranging the finite $q$-products on the series
side (for example, \textbf{S10} and \textbf{S47}).

After eliminating one of each such pair of identities from the list
of identities, a natural question is: how  independent from each
other are the identities in the remaining set? In this section we
describe a number of  non-trivial ways  in which  pairs, or larger
sets, of identities are dependent.

 \subsection{Series-Equivalent Identities}

 Suppose we have an identity of the form
 \begin{equation}\label{equivid}
 \text{Infinite Series$_{1}$}
 = \text{Infinite Product}\times \text{Infinite Series$_{2}$},
\end{equation}
where each side contains one or more free parameters. It is
immediately clear that if, for particular values of the
parameters, either series has a representation as an infinite
product, then so does the other. We say two identities of
Rogers-Ramanujan type are \emph{series-equivalent}, if one can be
derived from the other by specializing the free parameters in an
identity of the type at \eqref{equivid}.

We next list some general series transformations. Most can be
derived as limiting cases of transformations between basic
hypergeometric series. Let $a$, $b$, $c$, $d$, $\gamma$ and $q \in
\mathbb{C}$, $|q|<1$. Then
\begin{equation}\label{eq0}
\sum_{n=0}^{\infty} \frac{(a,b;q)_{n}q^{n(n-1)/2}\left(-c \gamma /ab
\right)^{n}} {(c,\gamma,q;q)_{n}}= \frac{(c \gamma /ab;
q)_{\infty}}{(\gamma;q)_{\infty}}\sum_{n=0}^{\infty} \frac{(c /a,c
/b;q)_{n}q^{n(n-1)/2}(-\gamma)^{n}} {(c,c \gamma /ab,q;q)_{n}}.
\end{equation}
\begin{equation}\label{eq1}
\sum_{n=0}^{\infty} \frac{(a;q)_{n}q^{n(n-1)/2}\gamma^{n}}
{(b;q)_{n}(q;q)_{n}} = \frac{(-\gamma;
q)_{\infty}}{(b;q)_{\infty}}\sum_{n=0}^{\infty} \frac{(-a \gamma
/b;q)_{n}q^{n(n-1)/2}(-b)^{n}} {(-\gamma;q)_{n}(q;q)_{n}}.
\end{equation}
\begin{equation}\label{eq2}
\sum_{n=0}^{\infty} \frac{(a;q)_{n}q^{n(n-1)/2}\gamma^{n}}
{(q;q)_{n}} = (-\gamma; q)_{\infty}\sum_{n=0}^{\infty} \frac{(-a
\gamma )^{n}q^{n(n-1)}} {(-\gamma;q)_{n}(q;q)_{n}}.
\end{equation}
\begin{equation}\label{eq3}
\sum_{n=0}^{\infty} \frac{(a;q)_{n}q^{n(n-1)/2}\gamma^{n}} {(-a
\gamma;q)_{n}(q;q)_{n}} =\frac{(-\gamma;q)_{\infty}}{(-a
\gamma;q)_{\infty}}.
\end{equation}
\begin{equation}\label{eq4}
\sum_{n=0}^{\infty} \frac{q^{3n(n-1)/2}\gamma^{n}} {(
\gamma;q^2)_{n}(q;q)_{n}} =\frac{1}{(
\gamma;q^2)_{\infty}}\sum_{n=0}^{\infty}
\frac{q^{2n^2-n}\gamma^{n}} {(q^2;q^2)_{n}}.
\end{equation}
\begin{equation}\label{eq5}
\sum_{n=0}^{\infty} \frac{q^{n^2-n}(-\gamma)^{n}} {( \gamma
q;q^2)_{n}(q^2;q^2)_{n}} =\frac{1}{( \gamma
q;q^2)_{\infty}}\sum_{n=0}^{\infty} \frac{q^{n^2-n}(-\gamma)^{n}}
{(q;q)_{n}}.
\end{equation}
\begin{equation}\label{eq6}
\sum_{n=0}^{\infty} \frac{q^{n^2-n}(-\gamma)^{n}} {( \gamma /
q;q^2)_{n}(q^2;q^2)_{n}} =\frac{1}{( \gamma /
q;q^2)_{\infty}}\sum_{n=0}^{\infty} \frac{q^{n^2-2n}(-\gamma)^{n}}
{(q;q)_{n}}.
\end{equation}
\begin{equation}\label{eq7}
\sum_{n=0}^{\infty} \frac{q^{n(n-1)/2}\gamma^{n}} {(
\gamma;q)_{n}(q;q)_{n}} =\frac{1}{(
\gamma;q)_{\infty}}\sum_{n=0}^{\infty}
\frac{q^{2n^2-n}\gamma^{2n}} {(q^2;q^2)_{n}}.
\end{equation}
\begin{equation}\label{eq8}
\sum_{n=0}^{\infty} \frac{(b;q^2)_{n} q^{n(n-1)/2}(-\gamma)^{n}}
{( b;q)_{n}(q;q)_{n}}
 =(
\gamma;q)_{\infty}\sum_{n=0}^{\infty}
\frac{q^{4n^2-2n}(b\gamma^{2})^{n}}
{(q^2;q^2)_{n}(bq;q^2)_{n}(\gamma;q)_{2n}}.
\end{equation}
\begin{equation}\label{eq9}
\sum_{n=0}^{\infty} \frac{ q^{n(n-1)/2}(-\gamma)^{n}} {(
b;q)_{n}(q;q)_{n}}
 =(
\gamma;q)_{\infty}\sum_{n=0}^{\infty}
\frac{q^{(3n^2-3n)/2}(-b\gamma)^{n}}
{(q;q)_{n}(b;q)_{n}(\gamma;q)_{n}}.
\end{equation}
\begin{equation}\label{eq10}
\sum_{n=0}^{\infty} \frac{ q^{n^2}\gamma^{n}} {(
q/b;q)_{n}(q;q)_{n}}
 =(
-\gamma q^2;q^2)_{\infty}\sum_{n=0}^{\infty}
\frac{q^{n^2}\gamma^{n}(-q/b;q)_{2n}}
{(q^2;q^2)_{n}(q^2/b^2;q^2)_{n}(-\gamma q^2;q^2)_{n}}.
\end{equation}
\begin{equation}\label{eq11}
\sum_{n=0}^{\infty} \frac{(a;q)_{n}q^{n^2+n}(b\gamma/a)^{n}}
{(-bq;q)_{n}(-\gamma q;q)_{n}(q;q)_{n}} = \frac{1}{(-\gamma
q;q)_{\infty}}\sum_{n=0}^{\infty} \frac{(-b
q/a;q)_{n}q^{n(n+1)/2}\gamma^{n}} {(-bq;q)_{n}(q;q)_{n}}.
\end{equation}
If $n$ is a positive integer, then
\begin{equation}\label{eq13}
(-b q^n;q^n)_{\infty}\sum_{m=0}^{\infty} \frac{q^{(m^2+m)/2}a^{m}}
{(-bq^n;q^n)_{m}(q;q)_{m}} = (-a q;q)_{\infty}\sum_{m=0}^{\infty}
\frac{q^{n(m^2+m)/2}b^{m}} {(-aq;q)_{nm}(q^n;q^n)_{m}}.
\end{equation}
\begin{equation}\label{eq14}
\sum_{n=0}^{\infty} \frac{q^{n^2+n}a^{n}} {(q^2;q^2)_{n}(1+a
q^{2n+1})} = (-a q^2;q^2)_{\infty}\sum_{n=0}^{\infty}
\frac{q^{(n^2+n)/2}(-a)^{n}} {(-aq;q)_{n}}.
\end{equation}
\begin{equation}\label{eq15}
\sum_{n=0}^{\infty} \frac{(b/a;q)_{n}q^{(n^2+n)/2}a^{n}}
{(q;q)_{n}(aq;q)_{n}} = \frac{(bq;q^2)_{\infty}}{(a q;q)_{\infty}}
\sum_{n=0}^{\infty} \frac{(a^2q/b;q^2)_{n}q^{n^2+n}(-b)^{n}}
{(q^2;q^2)_{n}(bq;q^2)_{n}}.
\end{equation}
\begin{equation}\label{eq16}
\sum_{n=0}^{\infty} \frac{(d;q)_{2n}q^{n^2-n}(-c^2/d^2)^{n}}
{(q^2;q^2)_{n}(c;q)_{2n}}
 =
\frac{(c^2/d^2;q^2)_{\infty}}{( c;q)_{\infty}} \sum_{n=0}^{\infty}
\frac{q^{n^2-n}(-c)^{n}} {(q;q)_{n}(-c/d;q)_{n}}.
\end{equation}
\begin{equation}\label{eq161}
\sum_{n=0}^{\infty} \frac{q^{3n^2-2n}(-a^2)^{n}}
{(q^2;q^2)_{n}(a;q)_{2n}}
 =
\frac{1}{( a;q)_{\infty}} \sum_{n=0}^{\infty}
\frac{q^{n^2-n}(-a)^{n}} {(q;q)_{n}}.
\end{equation}
\begin{equation}\label{eq17}
\sum_{n=0}^{\infty} \frac{(a;q)_{n}q^{n^2-n}(-b)^{n}}
{(q;q)_{n}(ab;q^2)_{n}}
 =
\frac{(b;q^2)_{\infty}}{( ab;q^2)_{\infty}} \sum_{n=0}^{\infty}
\frac{(a;q^2)_{n}q^{n^2-n}(-bq)^{n}} {(q^2;q^2)_{n}(b;q^2)_{n}}.
\end{equation}
\begin{equation}\label{eq18}
\sum_{n=0}^{\infty} \frac{(a^2,b;q)_{n}q^{n^2+n}(-a^2/b)^{n}}
{(q;q)_{n}(a^2q/b;q)_{n}}
 =
\frac{(a^2q;q)_{\infty}}{( -aq;q)_{\infty}} \sum_{n=0}^{\infty}
\frac{(aq/b,-a;q)_{n}q^{(n^2-n)/2}(aq)^{n}} {(a^2q/b,aq,q;q)_{n}}.
\end{equation}
\begin{equation}\label{eq19}
\sum_{n=0}^{\infty} \frac{(a^2;q)_{n}q^{(3n^2+n)/2}a^{2n}}
{(q;q)_{n}}
 =
\frac{(a^2q;q)_{\infty}}{( -aq;q)_{\infty}} \sum_{n=0}^{\infty}
\frac{(-a;q)_{n}q^{(n^2-n)/2}(aq)^{n}} {(aq,q;q)_{n}}.
\end{equation}

Some of the identities above are derived from other identities
above, as limiting cases. However, we list them explicitly to have
the available for what follows below. Several follow from
identities in Andrews' two papers \cite{A66a}, \cite{A66b} and
some also follow from identities in Ramanujan's notebooks.

The transformation at \eqref{eq0} is a limiting case of a
$q$-analogue of the Kummer-Thomae-Whipple formula (see
\cite{GR04}, page 72, equation 3.2.7), which in turn is a limiting
case of Sear's $_{4}\phi_{3}$ transformation formula, \cite{S51}.

The identity at \eqref{eq1} is found in Ramanujan's lost notebook
\cite{R88} and a proof can be found in the recent book by Andrews
and Berndt \cite{AB05}. Equation \ref{eq2} follows from \ref{eq1}
upon letting $b \to 0$. The identity at \eqref{eq3}, which follows
upon setting $b=-a \gamma$ in \eqref{eq1}, is also found in
Ramanujan's notebooks (see \cite{B94}, Chapter 27, Entry 1, page
262). This identity is also equivalent to a result found in
Andrews \cite{A72}, where Andrews attributes it to Cauchy.

Proofs of \eqref{eq4}, \eqref{eq5}, \eqref{eq6} and \eqref{eq7}
can be found in \cite{GS83}, and alternative proofs can be found
in \cite{BMS07}. Identities  \eqref{eq5},  \eqref{eq6} and
\eqref{eq7} were also stated by Ramanujan in the lost notebook
(see Entries \textbf{1.5.1} and \textbf{1.5.2} in \cite{AB07}).
Proofs of \eqref{eq8}, \eqref{eq9} and \eqref{eq10} also are to be
found in \cite{GS83}.  Transformation \ref{eq9} was also stated by
Ramanujan (see Entry \textbf{2.24} of \cite{AB07}).

The transformation at \eqref{eq11} is a limiting case of Jackson's
transformation, \cite{J10} (see also \cite{GR04}, page 14).

A limiting case of a transformation due to Andrews \cite{A66a} leads
to the identity at \eqref{eq13}, which was also given by Ramanujan
in the lost notebook (see \cite{AB07}, Entry \textbf{1.4.12}).

Identity \ref{eq14} can be found in Ramanujan's lost notebook, and
a proof is given in \cite{AB07}, Entry \textbf{1.6.5}. Likewise,
\eqref{eq15} is also from Ramanujan's lost notebook (see
\cite{AB07}, Entry \textbf{1.7.3}).

The transformation at \eqref{eq16} follows from a series
transformation relating two $_8 \phi_7$'s in  \cite{GR04} (
(3.5.4) on pages 77--78, after replacing $c$ with $aq/c$, then
letting $a\to 0$ and finally letting $b \to \infty$). The identity
at \eqref{eq161} follows from that at \eqref{eq16}, upon letting
$d \to \infty$ and then replacing $c$ with $a$.

The identity at \eqref{eq17} follows from a result of Andrews in
\cite{A66b} (see also Corollary 1.2.3 of \cite{AB07}, where it
follows after replacing $t$ by $t/b$, then letting $b \to \infty$
and finally replacing $t$ by $b$).

A special case of Watson's transformation \cite{W29} of a
terminating very-well-poised $_8 \phi_7$ yields \eqref{eq18} (see
also \cite{GR04}, page 43, where the transformation follows upon
letting $n \to \infty$, replacing $a$ by $a^2$, setting $c=a$,
$d=-a$ and finally letting $e \to \infty$). The transformation at
\eqref{eq19} follows from that at \eqref{eq18}, after letting $b
\to \infty$.

 Several identities on Slater's list follow directly from some of the
 transformations above, in that particular values of the
 parameters make one of the series identically equal to 1, so
 that what remains is a ``series = product" identity. We  list
 such identities, along with the transformations from which they
 derive in Table \ref{Ta:t1}. Series-equivalent identities are
 listed in Table \ref{Ta:t2}.

 Note that the case $a=q$ in \eqref{eq3} implies that, for $|q|<1$
 and $\gamma \in \mathbb{C}$,
 \[
 \sum_{n=0}^{\infty} \frac{q^{n(n-1)/2}\gamma^{n}}
 {(-\gamma, q)_{n+1}}=1.
 \]

 The tables are to be
 understood as follows: let the parameters have the specified
 values in the specified transformations (listed in the first column), and then, if applicable, make the indicated base changes in
 either both sides of the transformation or else recognize that one
 of the series equals the series in the corresponding identity,
 after the indicated base change.

For example, from row one of Table \ref{Ta:t2}, if the indicated
substitutions are made in Transformation \ref{eq11}, we get that
\begin{equation*}
\sum_{n=0}^{\infty} \frac{(-1;q)_{n}q^{n^2+n}(1/q)^{n}}
{(-q^{1/2};q)_{n}(q^{1/2};q)_{n}(q;q)_{n}} = \frac{1}{(
q^{1/2};q)_{\infty}}\sum_{n=0}^{\infty}
\frac{(q^{1/2};q)_{n}q^{n(n+1)/2}(-q^{-1/2})^{n}}
{(-q^{1/2};q)_{n}(q;q)_{n}}.
\end{equation*}
From \textbf{S.6} on Slater's list, the left side equals
$(-q,-q^2,q^3;q^3)_{\infty}/(q;q)_{\infty}$. Replace $q$ by $q^{2}$
and we have that
\begin{equation*}
\frac{(-q^2,-q^4;q^6;q^6)_{\infty}(q;q^2)_{\infty}}{(q^2;q^2)_{\infty}}
=\sum_{n=0}^{\infty} \frac{(q;q^2)_{n}q^{n^2}(-1)^{n}}
{(-q;q^2)_{n}(q^2;q^2)_{n}}.
\end{equation*}
Finally, replace $q$ by $-q$ and we have Identity \textbf{S.29}
from Slater's list.

As a second example, from row 5 of the table, if the indicated
value $\gamma = q^{3/2}$ is substituted in Transformation
\ref{eq7}, we get
\begin{equation*}
\sum_{n=0}^{\infty} \frac{q^{n(n+2)/2}} {(q^{3/2};q)_{n}(q;q)_{n}}
=\frac{1}{( q^{3/2};q)_{\infty}}\sum_{n=0}^{\infty}
\frac{q^{2n^2+2n}} {(q^2;q^2)_{n}}.
\end{equation*}
The series on the right is the series in Identity \textbf{S.14}
(with $q$ replaced by $q^2$). Thus it follows that
\begin{equation*}
\sum_{n=0}^{\infty} \frac{q^{n(n+2)/2}} {(q^{3/2};q)_{n}(q;q)_{n}}
=\frac{1}{( q^{3/2};q)_{\infty}(q^4,q^6;q^{10})_{\infty}}.
\end{equation*}
Replacing $q$ by $q^2$ and dividing both sides by $1-q$ leads to
Identity \textbf{S.96}.

The transformations also imply some identities which we believe to
be new. For example, if we set $c=-q^2$, $d=q^{1/2}$ and then
replace $q$ by $q^2$ in \eqref{eq16}, the series on the right side
becomes that in \textbf{S38} (up to a multiple of $1-q$) and leads
to the following identity: {\allowdisplaybreaks
\begin{equation}\label{eq16a}
\sum_{n=0}^{\infty} \frac{(q;q^2)_{2n}q^{2n^2+4n}(-1)^n}
{(q^8;q^8)_{n}(-q^2;q^4)_{n+1}} =
(-q^9,-q^7,q^8;q^8)_{\infty}\frac{ (q^2;q^4)_{\infty}}{
(q^4;q^4)_{\infty}}.
\end{equation}
} A similar pairing of \textbf{S39} and \eqref{eq16} leads to the
following:
 {\allowdisplaybreaks
\begin{equation}\label{eq16b}
\sum_{n=0}^{\infty} \frac{(q;q^2)_{2n}q^{2n^2}(-1)^n}
{(q^8;q^8)_{n}(-q^2;q^4)_{n}} = (-q^3,-q^5,q^8;q^8)_{\infty}\frac{
(q^2;q^4)_{\infty}}{ (q^4;q^4)_{\infty}}.
\end{equation}
These identities  are somewhat reminiscent of the following
identities of Gessel and Stanton \cite{GS83}: {\allowdisplaybreaks
\begin{equation}\label{GS2}
\sum_{n=0}^{\infty} \frac{(-q;q^2)_{2n}q^{2n^2}}
{(q^8;q^8)_{n}(q^2;q^4)_{n}} = (-q^3,-q^5,q^8;q^8)_{\infty}\frac{(
-q^2;q^4)_{\infty}}{( q^4;q^4)_{\infty}}.
\end{equation}
}
 {\allowdisplaybreaks
\begin{equation}\label{GS1}
\sum_{n=0}^{\infty} \frac{(q;q^2)_{2n+1}q^{2n^2+2n}}
{(q^2;q^2)_{2n+1}(-q^2;q^4)_{n+1}} =
\frac{(-q,-q^7,q^8;q^8)_{\infty}( -q^4;q^4)_{\infty}}{
(q^4;q^4)_{\infty}}.
\end{equation}
}

\begin{table}[ht]  \begin{center}
\begin{tabular}{| c | c | c | c |c | c | c |}   \hline
Transform.&Identity  &$a$& $b$  & $\gamma$ & base change     \\
\hline \ref{eq2}& \textbf{S.2}  & 0 &
  & $q$&
 \\
\ref{eq2}& \textbf{S.3}  & 1 &
  & $q$&
 \\
\ref{eq1}& \textbf{S.4}  &$ -q^{1/2}$ & $-q$
  & $-q^{1/2}$&$q\to q^2$
 \\
 \ref{eq2}& \textbf{S.9}  & 1 &
  &$-q^{3}$ & $q\to q^2$
 \\
\ref{eq1}& \textbf{S.10}  & -1 & $q$
  &$q$ &$q\to q^2$
 \\
 \ref{eq1}& \textbf{S.11}  & $-q$ & $q^3$
  &$q^2$ &$q\to q^2$
 \\
 \ref{eq2}& \textbf{S.52}  & 1 & $$
  &$-q^{1/2}$ &$q\to q^2$
 \\
 \ref{eq2}& \textbf{S.47}  & -1 & $$
  &$q^{1/2}$ &$q\to q^2$
 \\
 \hline
    \end{tabular}\phantom{asdf}\\
\vspace{10pt}   \caption{Identities deriving directly from general
transformations.}\label{Ta:t1}
 \end{center}\end{table}

{\allowdisplaybreaks
\begin{table}[!ht]
\begin{center}
\begin{tabular}{| c | c |c | c | c |c | c | c |}   \hline
Transf.&Left &Right &$a$& $b$  & $\gamma$ & base change     \\
\hline \ref{eq11}& \textbf{S6} & \textbf{S29} & -1 & $q^{-1/2}$
  &$-q^{-1/2}$ &$q\to q^2,$$q\to -q$\\
\ref{eq4}& \textbf{S44} & \textbf{S14} &  &
  &$q^{3}$ &
 \\
 \ref{eq5}& \textbf{S17} & \textbf{S14} &  &
  &$-q^{2}$ &
 \\
 \ref{eq6}& \textbf{S16} & \textbf{S14} &  &
  &$-q^{3}$ &
 \\
 \ref{eq9}& \textbf{S16} & \textbf{S97} &  &$-q$
  &$q^{3/2}$ &$q\to q^{2}$,$q\to -q$
 \\
\ref{eq7}& \textbf{S96} & \textbf{S14} &  &
  &$q^{3/2}$ & ($q\to q^2$) $q\to q^2$
 \\
\ref{eq4}& \textbf{S46} & \textbf{S18} &  &
  &$q$ &
 \\
\ref{eq5}& \textbf{S20} & \textbf{S18} &  &
  &$-q$ &
 \\
\ref{eq6}& \textbf{S99} & \textbf{S18} &  &
  &$-q^2$ &
 \\
\ref{eq7}& \textbf{S79} & \textbf{S18} &  &
  &$q^{1/2}$ &($q\to q^2$) $q\to q^2$\\
 \ref{eq9}& \textbf{S20} & \textbf{S19} & $ $ &$-q$
  &$q^{-1/2}$ & $q\to q^2$\\
  \ref{eq11}& \textbf{S22} & \textbf{S50} & $-q$ &$q^{-1/2}$
  &$q^{1/2}$ & $q\to q^2$\\
\ref{eq1}& \textbf{S25} & \textbf{S48} & $-q^{1/2}$ &$-q$
  &$q^{1/2}$ & $q\to q^2$\\
\ref{eq11}& \textbf{S27} & \textbf{S28} & $-q^{1/2}$ &$-q^{1/2}$
  &$1$ & $q\to q^2$\\
\ref{eq1}& \textbf{S28} & \eqref{RamStanton} & $-q$ &$q^{3/2}$
  &$q$ & $q\to q^2$\\
\ref{eq15}& \textbf{S29} & \textbf{S48} & $q^{-1/2}$ &$-1$
  & & $q\to q^{1/2}$\\
\ref{eq15}& \textbf{S50} & \eqref{RamStanton} & $q^{-1/2}$ &$-q$
  & & $q\to q^{1/2}$\\
\ref{eq2}& \textbf{S34} & \textbf{S38} & $-q^{1/2}$ &$$
  &$q^{3/2}$ & $q\to q^2$\\
\ref{eq2}& \textbf{S36} & \textbf{S39} & $-q^{1/2}$ &$$
  &$q^{1/2}$ & $q\to q^2$\\
\ref{eq10}& \textbf{S38} & \textbf{\ref{GS1}} & $$ &$q^{-1/2}$
  &$q$ & $q\to q^2$\\
\ref{eq10}& \textbf{S39} & \textbf{\ref{GS2}} & $$ &$q^{1/2}$
  &$1$ & $q\to q^2$\\
\ref{eq9}& \textbf{S44} & \eqref{RogMod20} & $$ &$q^{3/2}$
  &$-q^{3/2}$ & $q\to q^2$\\
\ref{eq9}& \textbf{S46} & \textbf{S79} & $$ &$q^{1/2}$
  &$-q^{1/2}$ & $q\to q^2$\\
\ref{eq1}& \textbf{S94} & \textbf{S16} & $0$ &$q^{3/2}$
  &$q$ & $q\to q^2$\, $(q \to -q)$\\
\ref{eq9}& \textbf{S94} & \textbf{S97} & $$ &$q^{3/2}$
  &$-q$ & $q\to q^2$\\
\ref{eq9}& \textbf{S96} & \textbf{S44} & $$ &$q^{3/2}$
  &$-q^{3/2}$ & $q\to q^2$\\
\ref{eq1}& \textbf{S99} & \textbf{S20} & $0$ &$q^{1/2}$
  &$q$ & $q\to q^2$\, $(q \to -q)$\\
\ref{eq9}& \textbf{S99} & \textbf{S100} & $$ &$q^{1/2}$
  &$-q$ & $q\to q^2$\\
\ref{eq13}& \textbf{S80} & \textbf{S118} & $1$ &$-q$
  &$$ & ($n=2$)\\
\ref{eq13}& \textbf{S81} & \textbf{S117} & $1$ &$-1/q$
  &$$ & ($n=2$)\\
\ref{eq13}& \textbf{S82} & \textbf{S119} & $q$ &$-q$
  &$$ & ($n=2$)\\
\ref{eq161}& \textbf{2.5.3} & \textbf{S14} & $-q^2$ &$$
  &$$ & \\
\ref{eq161}& \textbf{S19} & \textbf{S18} & $-q^2$ &$$
  &$$ & \\
\ref{eq17}& \textbf{S6} & \textbf{S48} & $-1$ &$q$
  &$$ & $q\to -q$\\
\ref{eq17}& \textbf{S22} &  \eqref{RamStanton} & $-q$ &$-q^2$
  &$$ & $$\\
\ref{eq19}& \textbf{1.3.3} & \textbf{S50} & $q^{1/2}$ &$$
  &$$ & $q\to q^2$\\
& ($a=q$) &  &  &$$
  &$$ & $$\\
  \hline
    \end{tabular}\phantom{asdf}\\
\vspace{10pt}   \caption{Identities equivalent to each other via
some general transformation}\label{Ta:t2}
 \end{center}\end{table}
}

\subsection{Inter-dependence of Identities via the Jacobi Triple
Product Identity}

Let $m \geq 2$ be a positive integer. By considering sums in the $m$
arithmetic progressions $mk+r$, $0 \leq r <m$ and $k \in
\mathbb{Z}$, we easily get that
\begin{equation*}
\sum_{k=-\infty}^{\infty}x^k q^{k(k-1)/2}=
\sum_{r=0}^{m-1}q^{r(r-1)/2}x^{r} \sum_{k=-\infty}^{\infty}\left(
x^{m} q^{(m^2-m+2mr)/2} \right)^{k} q^{m^2(k^2-k)/2},
\end{equation*}
and thus, after applying the Jacobi triple product, that
\begin{multline}\label{xq}
(-x,-q/x,q;q)_{\infty}\\= \sum_{r=0}^{m-1}q^{r(r-1)/2}x^{r} \left(
-x^{m} q^{(m^2-m+2mr)/2}, -x^{-m} q^{(m^2+m-2mr)/2}, q^{m^2};q^{m^2}
\right)_{\infty}.
\end{multline}
The case $m=2$ is of course Bailey's expression
\begin{equation*}
(-x,-q/x,q;q)_{\infty}
 = \left(
-x^{2} q, -x^{-2} q^{3}, q^{4};q^{4} \right)_{\infty} + x \left(
-x^{2} q^{3}, -x^{-2} q, q^{4};q^{4} \right)_{\infty}.
\end{equation*}

If we replace $q$ by $q^3$ and set $x=-q$ in \eqref{xq}, then
\begin{align}\label{qq}
(q:q)_{\infty} &= (-q^5, -q^7, q^{12};q^{12})_{\infty}-q(-q^{11},
-q, q^{12};q^{12})_{\infty}, \phantom{asdaasd}&(m=2)\\
&= (q^{12}, q^{15}, q^{27};q^{27})_{\infty}-q(q^{21}, q^{6},
q^{27};q^{27})_{\infty} \notag\\
&\phantom{asdasdasdasdasdasasdadda}- q^{2}(q^{3}, q^{24},
q^{27};q^{27})_{\infty},\,\,&(m=3)\notag\\
&= (-q^{22}, -q^{26}, q^{48};q^{48})_{\infty}-q(-q^{34}, -q^{14},
q^{48};q^{48})_{\infty} \notag\\
&\phantom{as}- q^{2}(-q^{10}, -q^{38}, q^{48};q^{48})_{\infty}+
q^{5}(-q^{46}, -q^{2}, q^{48};q^{48})_{\infty},&(m=4)\notag\\
&\phantom{asasdadsdasdadasdadasd}\vdots  & \vdots
.\phantom{as}\notag
\end{align}
Note that the right sides required some elementary manipulations in
the cases $m=3$ and $m=4$.

Now suppose we have a finite set of ``Series $S_i$ = Product $P_i$"
identities ($1 \leq i \leq n$). Clearly
\begin{equation}\label{S=P}
\sum_{i=1}^{n} \alpha_i S_i = \sum_{i=1}^{n} \alpha_i P_i
\end{equation}
holds for any set of complex constants $\alpha_{i}$. If, for a
certain set of $\alpha_i$,  \eqref{S=P} follows as a consequence
of the Jacobi triple product identity, then we say the identities
$S_i=P_i$, $1 \leq i \leq n$ are \emph{JTP-dependent}, in the
sense that any one identity can be derived from the other $n-1$
identities taken together with the Jacobi triple product identity.
As an example, consider the following three identities from
Slater's list:
\begin{align}\label{ta3ex}
1
+\sum_{n=1}^{\infty}\frac{(q^3;q^3)_{n-1}q^{n^2}}{(q;q)_{2n-1}(q;q)_{n}}
&= \frac{(q^{12},q^{15},q^{27};q^{27})_{\infty}}{(q;q)_{\infty}}
&\tag{\textbf{S93}}\\
\sum_{n=0}^{\infty}\frac{(q^3;q^3)_{n}q^{n(n+2)}}{(q;q)_{2n+2}(q;q)_{n}}
&= \frac{(q^{6},q^{15},q^{21};q^{27})_{\infty}}{(q;q)_{\infty}}
&\tag{\textbf{S91}} \\
\sum_{n=0}^{\infty}\frac{(q^3;q^3)_{n}q^{n(n+3)}}{(q;q)_{2n+2}(q;q)_{n}}
&= \frac{(q^{3},q^{24},q^{27};q^{27})_{\infty}}{(q;q)_{\infty}}.
&\tag{\textbf{S90}}
\end{align}
One easily checks that the sum side of \textbf{S93} -
$q\times$\textbf{S91} - $q^2\times$\textbf{S90} is identically 1,
while the product side being identically 1 follows from the $m=2$
case of \eqref{qq}. Thus the identities \textbf{S90}, \textbf{S91}
and \textbf{S93} are JTP-dependent.

Before compiling a table of sets of identities which are
JTP-dependent, we first exhibit two other $q$-products which, like
$(q;q)_{\infty}$ at \eqref{qq},  have infinitely many expressions as
sums of triple products.

Firstly, if we replace $q$ by $q^4$ and set $x=-q$, we get that
\begin{align}\label{q2-q} \frac{(q^2:q^2)_{\infty}}{(-q:q^2)_{\infty}}
&= (-q^6, -q^{10}, q^{16};q^{16})_{\infty}-q(-q^{14},
-q^2, q^{16};q^{16})_{\infty}, &(m=2)\\
&= (q^{15}, q^{21}, q^{36};q^{36})_{\infty}-q(q^{27}, q^{9},
q^{36};q^{36})_{\infty} \notag\\
&\phantom{asdasdasdasdasdasas}- q^{3}(q^{3}, q^{33},
q^{36};q^{36})_{\infty},\,\,&(m=3)\notag\\
&= (-q^{28}, -q^{36}, q^{64};q^{64})_{\infty}
   -q(-q^{44}, -q^{20}, q^{64};q^{64})_{\infty} \notag\\
&- q^{3}(-q^{12}, -q^{58}, q^{64};q^{64})_{\infty}+
q^{6}(-q^{60}, -q^{4}, q^{64};q^{64})_{\infty},&(m=4)\notag\\
&\phantom{asasddadasdadasd}\vdots  &\vdots .\phantom{as} \notag
\end{align}
Here also the right sides required some elementary manipulations
in the cases $m=3$ and $m=4$.

Secondly, replace $q$ by $q^2$ and set $x=-q$ to get
{\allowdisplaybreaks
\begin{align}\label{q-q} \frac{(q:q)_{\infty}}{(-q:q)_{\infty}}
&= (-q^4, -q^{4}, q^{8};q^{8})_{\infty}-2q(-q^{8},
-q^8, q^{8};q^{8})_{\infty}, \phantom{asdaasd}&(m=2)\\
&= (q^{9}, q^{9}, q^{18};q^{18})_{\infty}-2q(q^{15}, q^{3},
q^{18};q^{18})_{\infty} &(m=3)\notag\\
&= (-q^{16}, -q^{16}, q^{32};q^{32})_{\infty}
   -2q(-q^{24}, -q^{8}, q^{32};q^{32})_{\infty} \notag\\
&\phantom{aasASasASasaSasasaas} +
2q^{4}(-q^{32}, -q^{32}, q^{32};q^{32})_{\infty},&(m=4)\notag\\
&\phantom{asasdadsdasdadasdada}\vdots  & \vdots .\phantom{as}\notag
\end{align}
}

The table is to be understood as follows: in each case it is easily
checked that  the combinations of $q$-series from the indicated
identities sums to the indicated value; that the same combination of
the corresponding products in the indicated identities sums to the
same value follows from the stated identity multiplied by the given
$q$-product, for the stated value of $m$. See the example at
\eqref{ta3ex} above for more details.

Applying the Jacobi triple product in the ways described above can
also lead to new identities. For example, considering
\textbf{S53}(with $q$ replaced by $-q$) $-q\times$\textbf{S57} in
conjunction with the $m=2$ case of \eqref{qq} leads to the following
identity:
\begin{align*}
1+ \sum_{n=1}^{\infty} \frac{q^{(2n-1)^2}(-q;q^2)_{2n-1}(-1+q^{4
n-1}+q^{8 n}+q^{8 n-2})}{(q^4;q^4)_{2n}}
=\frac{(q;q)_{\infty}}{(q^4;q^4)_{\infty}}.
\end{align*}

\begin{longtable}{| c | c | c | }   \hline
Series Identity&Product Identity       \\
\hline
 \textbf{S58}-$q\times$\textbf{S56}=1
 &$\displaystyle{\eqref{qq}|_{ m=2}\times \frac{1}{(q;q)_{\infty}}}$ \\
 \textbf{S54}+$q\times$\textbf{S49}=\textbf{S47}
 &$\displaystyle{\eqref{qq}|_{ m=2, q\to -q
 }\times \frac{1}{(q;q)_{\infty}}}$ \\
 \textbf{S93}-$q\times$\textbf{S91}-$q^{2}\times$\textbf{S90}=1
 &$\displaystyle{\eqref{qq}|_{m=3}\times \frac{1}{(q;q)_{\infty}}}$ \\
 \textbf{S120}-$q^2\times$\textbf{S122}=1
 &$\displaystyle{\eqref{qq}|_{m=4}\times \frac{1}{(q;q)_{\infty}}}$ \\
 \textbf{S72}-$q\times$\textbf{S69}=1
 &$\displaystyle{\eqref{q2-q}|_{m=2}\times\frac{(-q;q^2)_{\infty}}{(q^2;q^2)_{\infty}}}$ \\
 \textbf{S121}-$q\times$\textbf{S123}=1
 &$\displaystyle{\eqref{q2-q}|_{m=2}\times\frac{(-q;q^2)_{\infty}}{(q^2;q^2)_{\infty}}}$ \\
 \textbf{S114}-$q\times$\textbf{S115}-$q^3\times$\textbf{S116}=1
 &$\displaystyle{\eqref{q2-q}|_{m=3}\times\frac{(-q;q^2)_{\infty}}
 {(q^2;q^2)_{\infty}}}$ \\
 \textbf{S111}+\textbf{S113}-\textbf{S114}=1
 &$\displaystyle{\eqref{q2-q}|_{m=3}\times\frac{(-q;q^2)_{\infty}}
 {(q^2;q^2)_{\infty}}}$ \\
 \textbf{S113}-$q\times$\textbf{S115}=1
 &$\displaystyle{\eqref{q2-q}|_{m=3}\times\frac{(-q;q^2)_{\infty}}
 {(q^2;q^2)_{\infty}}}$ \\
 \textbf{S111}-$q^3\times$\textbf{S116}=1
 &$\displaystyle{\eqref{q2-q}|_{m=3}\times\frac{(-q;q^2)_{\infty}}
 {(q^2;q^2)_{\infty}}}$ \\
 \textbf{S126}-$q\times$\textbf{S128}=1
 &$\displaystyle{\eqref{q2-q}|_{m=4}\times\frac{(-q;q^2)_{\infty}}{(q^2;q^2)_{\infty}}}$ \\
 \textbf{S127}-$q^3\times$\textbf{S129}=1
 &$\displaystyle{\eqref{q2-q}|_{m=4}\times\frac{(-q;q^2)_{\infty}}
 {(q^2;q^2)_{\infty}}}$ \\
 \textbf{S78}-$2q\times$\textbf{S76}=1
 &$\displaystyle{\eqref{q-q}|_{m=3}\times\frac{(-q;q)_{\infty}}
 {(q;q)_{\infty}}}$ \\
 \textbf{S104}-$q\times$\textbf{S103}=1
 &$\displaystyle{\eqref{q-q}|_{m=4}\times\frac{(-q;q)_{\infty}}
 {(q;q)_{\infty}}}$ \\
 \hline
\caption{Sets of JTP-dependent identities.}\label{Ta:t3}
\end{longtable}
\vskip 5mm
A similar consideration of \textbf{S53} $+q\times$\textbf{S55}
yields

\begin{align*}
\sum_{n=0}^{\infty} \frac{q^{4n^2}(-q;q^2)_{2n}(1-q^{4 n+1}-q^{8
n+2}-q^{8 n+4})}{(q^4;q^4)_{2n+1}}
=\frac{(q;q)_{\infty}}{(q^4;q^4)_{\infty}}.
\end{align*}

Likewise, considering \textbf{S40}, \textbf{S41} and \textbf{S42}
(each with $q$ replaced by $q^3$), together with the $m=3$ case of
\eqref{qq}, yields
\begin{equation*}
 \sum_{n=0}^{\infty} \frac{q^{9 n^2}(q^3;q^3)_{3n}
 (1 - q^{1 + 9n} - q^{2 + 9n} + q^{5 + 18n} + q^{7 + 18n} - q^{9 + 18n})}
 {(q^9;q^9)_{n}(q^9;q^9)_{2n+1}}
=\frac{(q;q)_{\infty}}{(q^9;q^9)_{\infty}}.
\end{equation*}

\subsection{Inter-dependence Via an Identity of Weierstrass}
The identity in question is given in the  lemma below. We first
define
\[
[x;q]_{\infty}:=(x,q/x;q)_{\infty}, \hspace{20pt} [x_1,\dots
x_n;q]_{\infty}=[x_1;q]_{\infty}\dots [x_n;q]_{\infty},
\]
and note that $[x^{-1};q]_{\infty}=-x^{-1}[x;q]_{\infty}$.

\begin{lemma}
Let $a_1$, $a_2$, $\dots$, $a_n$; $b_1$, $b_2$, $\dots$, $b_n$ be
non-zero complex numbers such that \\
i) $a_i\not = q^n a_j$, for all $i\not = j$ and all $n \in
\mathbb{Z}$,\\
 ii) $a_1a_2\dots a_n= b_1b_2\dots b_n$. Then
\begin{equation}\label{Weq}
\sum_{i=1}^{n} \frac{\prod_{j=1}^{n}[a_i b_j^{-1};q]_{\infty}} {
\prod_{j=1,j\not=i}^{n}[a_i a_j^{-1};q]_{\infty}}=0.
\end{equation}
\end{lemma}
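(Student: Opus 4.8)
The plan is to recognize the left-hand side of \eqref{Weq} as a constant multiple of the sum of residues of a $q$-elliptic function, and then to invoke the fact that such a sum vanishes. The natural object to introduce is
\[
 F(z) := \frac{\prod_{j=1}^n [z b_j^{-1};q]_\infty}{\prod_{j=1}^n [z a_j^{-1};q]_\infty},
\]
which is meromorphic on $\mathbb{C}^\ast$. The first step is to record the quasi-periodicity $[qx;q]_\infty = -x^{-1}[x;q]_\infty$; this follows from $[x;q]_\infty = (x,q/x;q)_\infty$ together with the shifts $(qx;q)_\infty = (x;q)_\infty/(1-x)$ and $(1/x;q)_\infty = (1-1/x)(q/x;q)_\infty$, and is the same elementary computation underlying the stated inversion rule $[x^{-1};q]_\infty = -x^{-1}[x;q]_\infty$. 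Applying this to each of the $n$ numerator and $n$ denominator factors produces prefactors $(-1)^n z^{-n}\prod_j b_j$ and $(-1)^n z^{-n}\prod_j a_j$; by hypothesis (ii) these are equal, so $F(qz) = F(z)$ and $F$ descends to the torus $\mathbb{C}^\ast/q^{\mathbb{Z}}$.

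Next I would locate the poles. Since $[z a_j^{-1};q]_\infty$ has only simple zeros, occurring exactly at $z \in a_j q^{\mathbb{Z}}$, hypothesis (i) ensures that in a single fundamental annulus $\{|q|r < |z| \le r\}$ (with $r$ chosen off the zero set) the function $F$ has precisely $n$ poles, at $z = a_1,\dots,a_n$, all simple. The engine of the argument is the residue lemma for $q$-periodic functions: if $f(qz)=f(z)$ is meromorphic, then $\sum \operatorname{Res}_{z=z_0} \bigl(f(z)/z\bigr) = 0$, the sum running over poles in one fundamental annulus. I would prove this by integrating $f(z)/z$ around the boundary circles $|z|=r$ and $|z|=|q|r$ and substituting $z = qw$ on the inner circle, whereupon $f(z)/z\,dz = f(w)/w\,dw$ and the two boundary contributions cancel.

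The last step is to evaluate $\operatorname{Res}_{z=a_i}\bigl(F(z)/z\bigr)$. Factoring $[z a_i^{-1};q]_\infty = (1 - z a_i^{-1})(qz a_i^{-1};q)_\infty (q a_i z^{-1};q)_\infty$, using $(z-a_i)/(1 - z a_i^{-1}) \to -a_i$ as $z\to a_i$, and evaluating each of the two surviving $q$-products at $z=a_i$ as $(q;q)_\infty$, I obtain
\[
 \operatorname{Res}_{z=a_i}\frac{F(z)}{z}
 = \frac{1}{a_i}\cdot\frac{-a_i}{(q;q)_\infty^2}\cdot
 \frac{\prod_{j=1}^n [a_i b_j^{-1};q]_\infty}{\prod_{j\ne i}[a_i a_j^{-1};q]_\infty}
 = \frac{-1}{(q;q)_\infty^2}\,T_i,
\]
where $T_i$ denotes the $i$th summand of \eqref{Weq}. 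Summing over $i$ and invoking the residue lemma gives $\sum_i T_i = 0$, which is exactly the claim.

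The step I expect to be the main obstacle is the bookkeeping on the torus: one must verify that hypothesis (i) genuinely forces the poles $a_1,\dots,a_n$ to be distinct modulo $q^{\mathbb{Z}}$ and simple, so that exactly these $n$ residues enter and no spurious poles are overlooked, and one must apply the residue lemma with the correct weight $dz/z$ (rather than $dz$) so that the $q$-invariance of the integrand is exact. By contrast, the invariance $F(qz)=F(z)$ and the residue evaluation itself are routine once the factor $-x^{-1}$ is in hand; the hypotheses are used in precisely two places, (ii) for the periodicity of $F$ and (i) for the pole count.
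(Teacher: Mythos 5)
Your proof is correct, but it cannot be compared line-by-line with the paper, because the paper offers no proof at all: it merely cites Whittaker and Watson for the statement and Lewis~\cite{L91} for a proof. What you have written is the standard self-contained argument for Weierstrass-type theta identities --- descend $F(z)=\prod_j[zb_j^{-1};q]_\infty/\prod_j[za_j^{-1};q]_\infty$ to the torus $\mathbb{C}^\ast/q^{\mathbb{Z}}$ using hypothesis (ii), and kill the sum of residues of the $q$-invariant form $F(z)\,dz/z$ over a fundamental annulus --- and every step checks out: the quasi-periodicity $[qx;q]_\infty=-x^{-1}[x;q]_\infty$, the contour-cancellation proof of the residue lemma under the substitution $z=qw$, and the evaluation $\operatorname{Res}_{z=a_i}F(z)/z=-T_i/(q;q)_\infty^2$ via $[za_i^{-1};q]_\infty=(1-za_i^{-1})(qza_i^{-1};q)_\infty(qa_iz^{-1};q)_\infty$. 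Two small points of bookkeeping, both harmless. First, your claim that $F$ has \emph{precisely} $n$ simple poles in the annulus needs a caveat: if some $a_i\in b_jq^{\mathbb{Z}}$, a numerator zero cancels that pole; but then $[a_ib_j^{-1};q]_\infty=0$ makes $T_i=0$, so the relation $\operatorname{Res}=-T_i/(q;q)_\infty^2$ still holds (both sides vanish) and the conclusion is unaffected --- hypothesis (i) only guarantees the $a_i$ are distinct modulo $q^{\mathbb{Z}}$, which is what you actually use. Second, the poles in the chosen annulus are representatives $q^{m_i}a_i$ rather than the $a_i$ themselves; since $F(z)\,dz/z$ is invariant under $z\mapsto qz$, the residues at these representatives equal those at the $a_i$, a point you correctly anticipated by insisting on the weight $dz/z$. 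Your argument has the added merit of making visible exactly where each hypothesis enters, which the paper's bare citation does not.
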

This lemma appears in \cite{WW27} (page 45) and a proof can be found
in \cite{L91}. If $n=3$,  then \eqref{Weq} can be written as
\begin{equation}\label{Weq3}
\frac{\left [a_1/b_1,a_1/b_2,a_1/b_3; q
 \right ]_{\infty}}
 {\left[
a_1/a_2,a_1/a_3; q
 \right ]_{\infty}}
+\frac{\left [a_2/b_1,a_2/b_2,a_2/b_3; q
 \right ]_{\infty}}{\left
[ a_2/a_1,a_2/a_3; q
 \right ]_{\infty}
 }+
 \frac{\left
[a_3/b_1,a_3/b_2,a_3/b_3; q
 \right ]_{\infty}}{\left
[ a_3/a_1,a_3/a_2; q
 \right ]_{\infty}
 }=0.
\end{equation}

The dependence of identities via \eqref{Weq} is more tenuous than
dependence via series-equivalence or the Jacobi triple product. We
give the following example to illustrate the concept. If we
replace $q$ by $q^{10}$ in \eqref{Weq3} and set
\[
\{a_1,a_2,a_3;b_1,b_2,b_3\}=\{1,-q^5,q;q^{-1},-q^{-2}, q^9\},
\]
then after a little simplification we get that
\begin{equation}\label{wex1a}
q[q,q,-q^2,-q^4;q^{10}]_{\infty} +[q,q^3,-q^4,-q^4;q^{10}]_{\infty}
- [q^2,q^2,-q^3,-q^5;q^{10}]_{\infty}=0.
\end{equation}
This identity can be rearranged to give
\begin{equation}\label{wex1aa}
(-q,-q,q^2;q^2)_{\infty} =
\frac{(q^{10};q^{10})_{\infty}}{(q,q^9,-q^2,-q^8;q^{10})_{\infty}} +
\frac{q(q^{10};q^{10})_{\infty}}{(q^3,q^7,-q^4,-q^6;q^{10})_{\infty}}
\end{equation}

Similarly, if we replace $q$ by $q^{10}$ in \eqref{Weq3} and set
\[
\{a_1,a_2,a_3;b_1,b_2,b_3\}=\{1,-q^5,q^6;q^{-1},-q^{-2}, q^{14}\},
\]
we get that
\begin{equation}\label{wex1b}
-q[q,-q,-q^2,q^4;q^{10}]_{\infty} +[-q,q^3,q^4,-q^4;q^{10}]_{\infty}
- [q^2,-q^2,q^3,-q^5;q^{10}]_{\infty}=0.
\end{equation}
This identity can likewise be rearranged to give
\begin{equation}\label{wex1bb}
(-q^5,-q^5,q^{10};q^{10})_{\infty} =
\frac{(q^{10};q^{10})_{\infty}}{(q,q^9,-q^2,-q^8;q^{10})_{\infty}} -
\frac{q(q^{10};q^{10})_{\infty}}{(q^3,q^7,-q^4,-q^6;q^{10})_{\infty}}
\end{equation}
Finally, if we replace $q$ by $-q$ and subtract, we get that
\begin{equation}\label{wex1ab}
(q^5,q^5,q^{10};q^{10})_{\infty}- (q,q,q^2;q^2)_{\infty} =
\frac{2q(q^{10};q^{10})_{\infty}}{(-q^3,-q^7,-q^4,-q^6;q^{10})_{\infty}}
\end{equation}
See the paper by Cooper and Hirschhorn \cite{CH01} for these and
similar identities.

We now exhibit two identities of Rogers-Ramanujan type which are
related via \eqref{Weq}. The first appears in \cite{BMS07}:
\begin{equation}\label{c15eq}
\sum_{n=0}^{\infty}\frac{q^{(n^2+3n)/2}(-q;q)_{n}}
{(q;q^2)_{n+1}(q;q)_{n+1}} =\frac{(q^{10};q^{10})_{\infty}}
{(q;q)_{\infty}(q;q^2)_{\infty}
(-q^3,-q^4,-q^6,-q^7;q^{10})_{\infty}}.
\end{equation}

The second is an identity of Rogers (see \cite{S06}):
\begin{equation}\label{tseq}
\sum_{n=0}^{\infty}\frac{q^{n(n+1)/2}(-1;q)_{n}}{(q;q)_{n}(q;q^2)_{n}}
=\frac{(q^5,q^5,q^{10};q^{10})_{\infty}}
{(q;q)_{\infty}(q;q^2)_{\infty}}.
\end{equation}
One easily checks (by re-indexing the second sum) that
\[
\sum_{n=0}^{\infty}\frac{q^{n(n+1)/2}(-1;q)_{n}}{(q;q)_{n}(q;q^2)_{n}}
-2 q \sum_{n=0}^{\infty}\frac{q^{(n^2+3n)/2}(-q;q)_{n}}
{(q;q^2)_{n+1}(q;q)_{n+1}}=1.
\]
That
\[
\frac{(q^5,q^5,q^{10};q^{10})_{\infty}}
{(q;q)_{\infty}(q;q^2)_{\infty}}-2
q\frac{(q^{10};q^{10})_{\infty}} {(q;q)_{\infty}(q;q^2)_{\infty}
(-q^3,-q^4,-q^6,-q^7;q^{10})_{\infty}} =1
\]
follows from \eqref{wex1ab}. We say that pairs of identities which
are dependent via \eqref{Weq} are \emph{W-dependent}.

We give one other example of a pair of identities are
$W$-dependent. The first is \textbf{S.56}:
\begin{equation}\label{Weq3ca}
\sum_{n=0}^{\infty}\frac{(-q;q)_nq^{n(n+2)}}{(q;q^2)_{n+1}(q;q)_{n+1}}
= \frac{(-q,-q^{11},q^{12};q^{12})_{\infty}}{(q;q)_{\infty}}.
\end{equation}
The second is
\begin{equation}\label{Weq3cb}
\sum_{n=0}^{\infty}\frac{(-1;q)_nq^{n^2}}{(q;q^2)_{n}(q;q)_{n}} =
\frac{(q^3,q^{3},q^{6};q^{6})_{\infty}(-q;q)_{\infty}}{(q;q)_{\infty}}.
\end{equation}
It is easy to check (by re-indexing, as above) that
\[
\sum_{n=0}^{\infty}\frac{(-1;q)_nq^{n^2}}{(q;q^2)_{n}(q;q)_{n}}-2q\sum_{n=0}^{\infty}\frac{(-q;q)_nq^{n(n+2)}}{(q;q^2)_{n+1}(q;q)_{n+1}}
=1.
\]
That
\[
\frac{(q^3,q^{3},q^{6};q^{6})_{\infty}(-q;q)_{\infty}}{(q;q)_{\infty}}
-2q\frac{(-q,-q^{11},q^{12};q^{12})_{\infty}}{(q;q)_{\infty}}=1
\]
follows from the following identity (proved by Cooper and
Hirschhorn in \cite{CH01}, again making use of \eqref{Weq3}):
\[
(q^3,q^{3},q^{6};q^{6})_{\infty}-(q,q,q^{2};q^{2})_{\infty}=
2q(-q,-q^{11},q^{12};q^{12})_{\infty}(q;q^2)_{\infty}.
\]
It is possible that new, as yet unknown  identities of the
Rogers-Ramanujan type may be derivable from existing identities
through $W$-dependence.

\section{Bailey pairs}

\subsection{The Bailey Pairs of Slater}
Rogers~\cite{R17} categorized the series transformations he discovered into
seven groups labeled A through G.   With hindsight, each of these series transformations
can be seen to correspond to a Bailey pair.  Accordingly, when Slater~\cite{S51, S52} tabulated
the Bailey pairs she used to produce her list of Rogers-Ramanujan type identities, she
retained Rogers' designations, and added Groups H through M which contained
the new Bailey pairs she had discovered; those that did not correspond to Rogers'
series transformations.  The following table summarizes the use of Slater's Bailey pairs
to produce identities via insertion into various limiting cases of Bailey's lemma~\eqref{PBL}--\eqref{FBL}.
\pagebreak

\begin{longtable}{|c|c|c|c|c|c|}
\hline\hline
 BP & \eqref{PBL} &\eqref{TBL}& \eqref{S1BL} & \eqref{S2BL} & \eqref{FBL} \\
\hline\hline
A1 & S98 & S117 &  &         &     \\ 
A2 & S94 &            &  &S80 & \eqref{A2f}\\
A3 &  S99  & S118             &  & S81        &  \eqref{A3f}   \\
A4 & S96         & S119           &  & S82 &    \eqref{A4f}  \\
A5 & S83    &  S100          &  &  \eqref{A5s2}       &  \eqref{A5f}   \\
A6 & S84  &            &  &  S62       &  \eqref{A6f} \\
A7 & S85  &   S95         &  &         &    \eqref{A7f}  \\
A8 &          &    S97        &  &   S63      &   \\
\hline  
B1 & S18   &  S36   & S12 & S13     &     \\
B2 &  S14  &  S34          &  &         &     \\
B3 &          &            &  &   S8      &     \\
\hline
C1 &   S61 &   S79     & \eqref{C1s1}  &         &     \\
C3 &   S60       &            &  &   S45      & \eqref{C3f}   \\
C4 &  S59        &     \eqref{RogMod20}       &  &   S43  &     \eqref{C4f} \\
C5 &  \eqref{C5p} &  S52  &  &            &     \\
C6 &  S46         &            &  &   S26      &   \eqref{C6f} \\
C7 &   S44       &            &  &   S22      &  \eqref{F2f}  \\
\hline
D5 &          &            &  &        &   \eqref{D5f}  \\
D6 &          &            &  &  \eqref{D6s1}    &    \eqref{D6f}  \\
\hline
E1 &   S3-      &    S25        &  &        &   \\
E2 &   S3       &     S4   &  &       &     \\
E3 &   S7    &     \eqref{RamStanton}       &  &    S2     & \eqref{E3f}    \\
\hline
F1 &  S39 & S29, S23-           &  &         &     \\
F2 &  S38        &            &  &   S28     & \eqref{F2f}    \\
F3 &   S9       &  \eqref{F3t}          &  &         &     \\
\hline
G1 &   S33       &     S20,     S21-   &  &         &     \\
G2 &  S31        &            &  &    S17     & \eqref{G2f}   \\
G3 &   S32       &    S16        &  &         &     \\
G4 &   S19       &            &  &         &     \\
G5 &          &            &  &   S5  &  \eqref{G5f}  \\
G6 & S15 &&&&\\
\hline
H1 &   S6    &     S10       &  &         &     \\
H2 &   S39-      &   S23-         &  &         &     \\
H17 &    S1    &            &  &         &     \\
H19 &   S27      &            &  &      S11   &   \eqref{H19f}  \\
\hline
I7 &  S47 &  S66   &  &         &     \\
I8 &  S48    & S67   &  &         &     \\
I9 &  S58 &  S72          &  &         &     \\
I14 & S54 &  S71          &  &         &     \\
I15 & S87      &            &  &  S64  &     \\
I16 &          &            &  &   S65&     \\
I17 &  S56  S69      &            &  &  S37       & \eqref{I17f}    \\
I18 &   S50  & S68           &  & S35       &    \eqref{I18f}  \\
\hline
J1 &   S93       &      S114      &  & S73 &     \\
J2 &  S88  & S108, S113           &  & S74 &     \\
J3 &   S89 & S111           &  &  S75  &   \eqref{J3f}  \\
J4 & S124    & S109           &  &    S107     & \eqref{J4f}   \\
J5 & S125   & S110           &  &         &     \\
J6 &   S90  & S112           &  & S76       &  \eqref{J6f}  \\
\hline
K1 &  S121        & S126         &  &    S101     &     \\
K2 &  S120        &  S127          &  &  S104       &     \\
K3 &  S51       &  S130          &  & S105        &     \\
K4 &          & S70          &  &  S106       &  \eqref{K4f}   \\
K5 &  S123        & S128           &  &  S102       &     \\
K6 &  S122        & S129           &  &  S103       &   \eqref{K6f}  \\
\hline
L2 &  S53        &  \eqref{gs8}        &  &         &     \\
\hline
M2 & S57        &            &  &         &     \\
M3 &  S55        &            &  &         &     \\
\hline
\caption{Slater's Bailey pairs and associated identities}
 \end{longtable}

\subsection{Duality of Bailey pairs}
We recall that Andrews showed in \cite{A84}  that if
$(\alpha_{n}(a,q), \beta_{n}(a,q))$ is a Bailey pair relative to
$a$, then $(\alpha_{n}^*(a,q), \beta_{n}^*(a,q))$ is also a Bailey
pair relative to $a$, where
\begin{align*}
\alpha_{n}^*(a,q) &= a^n q^{n^2}\alpha_{n}(1/a,1/q),\\
 \beta_{n}^*(a,q) &= a^{-n}q^{-n^2-n}\beta_n(1/a,1/q).
\end{align*}

The pair $(\alpha_{n}^*(a,q), \beta_{n}^*(a,q))$ is called \emph{the
dual} of $(\alpha_{n}(a,q), \beta_{n}(a,q))$. Note that the dual of
$(\alpha_{n}^*(a,q), \beta_{n}^*(a,q))$ is $(\alpha_{n}(a,q),$
$\beta_{n}(a,q))$. Identities that arise from Bailey pairs that are
duals of each other may be regarded in some sense as being related,
so for completeness sake we give a table that matches Bailey pairs
with their duals.

Before giving this table, we make some comments. Firstly, Slater's
list of Bailey pairs contains several  pairs whose duals are not
listed, so we list the new Bailey pairs that are dual to these.
Secondly, some pairs, such as  \textbf{H7} and \textbf{E2} are in
fact the same pair, and we indicate the occurrence of such identical
pairs. Thirdly, Slater's list of Bailey pairs contains a good many
typographic errors. Sometimes retracing Slater's steps (making the
same choices that she did for the parameters in the identities she
used to derive the Bailey pairs) will recover the correct Bailey
pair, but this does not always work as in several cases Slater
employs an additional unstated manipulation to derive the Bailey
pair(for example, the same choice of parameters give rise to pairs
\textbf{B1} and \textbf{B2}, but only \textbf{B1} follows directly
from substituting these values into Equation (4.1) of \cite{S51}).

In this latter case we can insert the stated sequence
$\alpha_n(a,q)$ (under the assumption that the stated value
 for $\alpha_n(a,q)$ is correct, and that the error is in the stated value for $\beta_n(a,q)$) into
 the equation
 \begin{equation}\label{BPeq}
 \beta_n(a,q)=\sum_{r=0}^{n}\frac{1}{(q;q)_{n-r}(aq;q)_{n+r}}\alpha_r(a,q),
 \end{equation}
 use \emph{Mathematica} to compute $\beta_n(a,q)$ for low values
 of $n$ and hopefully recognize the  form of $\beta_n(a,q)$ for
 arbitrary $n$. If this does not work, then we try substituting the stated value
 for $\beta_n(a,q)$ into \eqref{BPeq}, and use \emph{Mathematica} to
 compute successive values for $\alpha_n(a,q)$, again try to find
 the formula for $\alpha_n(a,q)$. Of course there is the possibility
 that the pair derived in this way is not the pair Slater intended.
 This could happen if, for example, Slater incorrectly stated $\alpha_n(a,q)$, but this
 $\alpha_n(a,q)$ is part of an entirely different Bailey pair, so that the attempt to find
 Slater's correct pair would instead find the other Bailey pair (for example, Slater
 lists the same $\alpha_n(a,q)$ for both \textbf{F1} and \textbf{H1}, and assuming that
 the $\alpha_n(a,q)$ in \textbf{H1} is correct would lead to a Bailey pair different from what Slater
 had in mind for \textbf{H1}).

\begin{center}
{\bf Slater's Bailey Pairs with their respective duals}
\begin{longtable}{|l|l||l|l|}\hline
Slater's BP & Dual Pair&Slater's BP & Dual Pair
\\\hline
A.1 & A.5 &H.1* & H.1*\\
A.2 & A.8 &H.2 & \ref{pair5}\\
A.3 & A.7&H.5 & H.9 \\
A.4 & A.6&H.6 & H.6 \\
B.1 & H.4 &H.10 & H.11\\
B.2 & H.3&H.12 & H.14\\
B.3 &\ref{pair1}&H.13 & H.13\\
B.4 & \ref{pair2}&H.15 & H.16 \\
C.1 & C.5&H.17 & H.17\\
C.2 & \ref{pair3}&H.18 & H.18\\
C.3 & C.7&H.19=I.15*& H.19=I.15*\\
C.4 & C.6&I.1 & I.4\\
E.1 & E.2 = H.7&I.2*& I.3\\
E.3 & E.7*&I.5& I.6\\
E.4 & E.5= H.8&I.7 & I.8*\\
E.6* & \ref{pair4}&I.9 & I.9\\
F.1 & F.3&I.10 & I.11\\
F.2 & F.4&I.12& I.13\\
G.1 & G.4&I.14 & I.14\\
G.2* & G.5*&I.16*&I.16*\\
G.3 & G.6& I.17 & I.17\\
J.1* & J.1*& I.18 & I.18\\
J.2 & J.3&M.1 & M.1 \\
J.4 & J.5&M.2*&M.2*\\
J.6 & J.6&M.3* & M.3*\\
K.1 & K.2&L.1 & L.3\\
K.3 & K.4&L.2 & L.2\\
K.5 & K.6&L.4* & L.6\\
&&L.5* & L.5*\\\hline
\caption{A * indicates a typographic error in Slater's Paper,
corrected versions of these Bailey pairs are given below. The five new
Bailey pairs \eqref{pair1}---\eqref{pair5} are also stated below.}
\label{table_slater_duals}
\end{longtable}
\end{center}

The new Bailey pairs found (by deriving the duals of Bailey pairs
given by Slater whose duals were not given by her) are the
following.
\begin{flushleft}
\begin{align}\label{pair1}
 \alpha_n & =\frac{(-1)^nq^{\frac{-n(n+3)}{2}}(1-q^{2n+1})}{(1-q)}, \phantom{10pt}(a=q)\\
 \beta_n & =\frac{(-1)^n q^{-n(n+3)/2}}{(q;q)_n}. \notag
\end{align}
\begin{align}\label{pair2}
\alpha_n & =\frac{(-1)^n(q^{(-n^2+n+4)/2}(1-q^{n})+q^{-n(n+5)/2}(1-q^{n+1})) }{(1-q)}, \phantom{10pt}(a=q)\\
 \beta_n & =\frac{(-1)^n q^{\frac{-n(n+5)}{2}}}{(q;q)_n}.\notag
\end{align}
\begin{align}\label{pair3}
\alpha_{2n} & = (-1)^nq^{n^2}\left(q^{n}+ q^{-n}\right), \phantom{10pt}(a=1)\\
\alpha_{2n+1} & = (-1)^{n}q^{n^2-n-1}(1-q^{4n+2})\notag\\
 \beta_n & =\frac{q^{\frac{n(n-3)}{2}}}{(q;q)_n(q;q^2)_n}.\notag
\end{align}
\begin{align}\label{pair4}
\alpha_n & =\frac{(-1)^n q^{-2n}(1- q^{4n+2})}{(1-q^2)},\phantom{10pt} (a=q)\\
 \beta_n & =\frac{(-1)^nq^{-2n}}{(-q,q;q)_n}.\notag
\end{align}
\begin{align}\label{pair5}
\alpha_n & =(-1)^n(q^{\frac{n}{2}}+q^{-\frac{n}{2}}), \phantom{10pt}(a=1)\\
\beta_n & =\frac{(-1)^n q^{-\frac{n}{2}}}{(-q^{\half},q;q)_n}.\notag
\end{align}
\end{flushleft}

We next give corrected versions of the Bailey pairs (``BP" in the
tables below) in the Slater papers \cite{S51, S52}, which have
typographic errors.
\pagebreak

\begin{center}
{\bf Corrected Bailey pairs}\\
\begin{longtable}{|l|c|c|} \hline
BP & $\alpha_n $& $\beta_n$ \\ \hline
&&\\
E.6*&$\frac{(-1)^n q^{n^2-n}(1- q^{4n+2})}{(1-q^2)}$ &$\frac{q^n}{(-q,q;q)_n}$\\
&&\\ \hline
&&\\
E.7*&$(-1)^nq^{-n}\frac{\left(1 - q^{2n+1}\right)}{(1-q)}$ &$\frac{(-1)^nq^{-n}}{(-q,q;q)_n}$\\
&&\\ \hline
&&\\
H.1*&$q^{n^2}(q^{\frac{n}{2}}+q^{-\frac{n}{2}})$ &$\frac{1}{(q^{\half},q;q)_n}$\\
&&\\ \hline
&&\\
I.15*&$q^{\frac{n^2}{2}}\frac{(1+q^{n+\frac{1}{2}})}{(1+q^\half)}$ &$\frac{(-q^\half;q)_n}{(q^{\frac{3}{2}};q)_n(q^2;q^2)_n}$\\
&&\\ \hline
&&\\
L.5*&$q^{\frac{n(n-1)}{2}}\left(1 + q^{n}\right)$ &$\frac{(-1;q)_{n}}{(q;q)_n (q;q^2)_{n}}$\\
&&\\ \hline
&&\\
M.2*&$q^{\frac{n(2n+1)}{4}}\frac{\left(1 + q^{\frac{(2n+1)}{4}}\right)}{(1+q^{\frac{1}{4}})}$ &$\frac{(-q^{\frac{3}{4}};q^\half)_{2n}}{(q^2;q)_{2n}}$\\
&&\\ \hline
&&\\
M.3* &$(-1)^nq^{\frac{n(2n+1)}{4}}\frac{\left(1 - q^{\frac{(2n+1)}{4}}\right)}{(1-q^{\frac{1}{4}})}$ &$\frac{(q^{\frac{3}{4}};q^\half)_{2n}}{(q^2;q)_{2n}}$\\
& & \\ \hline
\end{longtable}
\end{center}

\begin{center}
{\bf Corrected Bailey pairs}\\
\begin{longtable}{|l|c|c|c|} \hline
BP & $\alpha_{2n}$ & $\alpha_{2n+1}$ & $\beta_n$ \\ \hline
&&&\\
G.2* & $\frac{q^{\frac{6n^2+n}{2}}\left(1-q^{2n+\frac{1}{2}}\right)}{(1-\sqrt{q})} $ & $ \frac{q^{\frac{6n^2+11n+5}{2}}\left(1-q^{-2n-\frac{3}{2}}\right)}{(1-\sqrt{q})} $ & $\frac{1}{(-q^{\frac{3}{2}};q)_n(q^2;q^2)_n} $ \\
&&&\\\hline
&&&\\
 G.5* & $\frac{q^{\frac{2n^2-n}{2}}\left(1-q^{2n+\frac{1}{2}}\right)}{(1-\sqrt{q})} $ & $ \frac{q^{\frac{2n^2+5n+3}{2}}\left(1-q^{-2n-\frac{3}{2}}\right)}{(1-\sqrt{q})}$ & $ \frac{(-1)^n q^{\frac{n^2}{2}}}{(-q^{\frac{3}{2}};q)_n(q^2;q^2)_n}$ \\
&&&\\\hline
&&&\\
I.2* & $(-1)^nq^{n^2}\left(q^{\frac{n}{2}}+ q^{-\frac{n}{2}}\right) $ & $(-1)^{n+1}q^{n^2}\left(q^{\frac{n}{2}}- q^{\frac{3n+1}{2}}\right) $ & $\frac{q^{\frac{n^2}{2}}}{(\sqrt{q};q)_n(q^2;q^2)_n} $ \\
&&&\\\hline
&&&\\
I.8* & $ (-1)^nq^{2n^2}\left(q^{n}+ q^{-n}\right)$& $(-1)^nq^{2n^2}\left(q^{3n+1}- q^{n}\right) $ & $\frac{q^n(-1;q^2)_n}{(q,q^2;q^2)_n} $\\
&&&\\\hline
&&&\\
I.16* & $ (-1)^nq^{2n^2}\frac{(1+q^{2n+\frac{1}{2}})}{(1+q^{\frac{1}{2}})}$ & $(-1)^{n+1}q^{2n^2+2n + \frac{1}{2}}\frac{(1+q^{2n+\frac{3}{2}})}{(1+q^{\frac{1}{2}})} $ & $\frac{(-q;q^2)_{n}}{(q^2;q^2)_n(-q^{\frac{1}{2}},q^{\frac{3}{2}};q)_{n}} $\\
&&&\\\hline
&&&\\
L.4* & $(-1)^nq^{n(n-1)}\left(1 + q^{2n}\right) $ & $0 $ & $\frac{q^{\frac{n(n-1)}{2}}}{(-q^\half;q)_n (q^\half;q^\half)_{2n}} $\\
&&&\\\hline
\end{longtable}
\end{center}

\begin{center}
{\bf Corrected Bailey pair}\\
\begin{longtable}{|l|c|c|c|c|} \hline
BP&$\alpha_{3n-1}$&$\alpha_{3n}$&$\alpha_{3n+1}$&$\beta_{n}$\\\hline
&&&&\\
J.1*&$ 0$& $(-1)^n q^{\frac{3n(3n-1)}{2}}(1+q^{3n}) $ & $0 $ & $\frac{(q^3;q^3)_{n-1}}{(q;q)_{n}(q;q)_{2n-1}} $\\
&&&&\\\hline
\end{longtable}
\end{center}

\section{Conclusion}
We have only touched on a small part of the Rogers-Ramanujan story in this survey.
The main goal has been to present an expanded version of Slater's list with the
earliest known reference to each identity in the literature.
Slater's list contained only a few references to the
earlier literature, and of course, Ramanujan's lost notebook was unknown to
the mathematical community in 1952.
Accordingly, the authors believe it was a useful endeavor to bring together
Slater's list with Ramanujan's lost notebook, and the dozens of additional
identities of similar type which have been scattered throughout the literature
over the years.  Since Slater's main tool was Bailey's lemma and Bailey pairs,
we included an exposition of this material in the introduction.

We have not attempted to address the numerous advances in the theory of $q$-series
which have occurred in the past few decades (e.g. Andrews's discovery of the
``Bailey Chain," and its application to multisum--product identities;
the extension of the Bailey chain to the WP Bailey chain by Andrews, Berkovich;
the extension of the Bailey chain to elliptic hypergeometric series by Spiridonov and
Warnaar;
Lepowsky, Milne, and Wilson's connection of $q$-series to Lie algebras and
vertex operator algebras; the work of Baxter, Berkovich, Forrester, McCoy, Melzer, Warnaar
connecting $q$-series to models in statistical mechanics; the work on finite
analogs by Andrews, Berkovich, Forrester, McCoy, Melzer, Paule, Riese,
the second author, Warnaar, Wilf, and Zeilberger.
Further, we have not considered any of the combinatorial consequences of
these identities, of which there are too many to even begin to list the most
important contributors.

\section*{Acknowledgements}
As with any long list of identities, keyed into \LaTeX \ by hand,
there are bound to be typographical errors, and
errors of omission.  Accordingly, the authors would like to thank in advance the
kind readers who will bring such errors, omissions, and updates to our attention,
so that we can make this survey as useful to our readers as possible.

In particular, we thank Michael Somos for finding and helping us correct 
numerous errors in the first published version.

\end{document}